\definecolor{ItalianApricot}{rgb}{1,0.7,0.5}
\def\thmt@refnamewithcomma #1#2#3,#4,#5\@nil{%
  \@xa\def\csname\thmt@envname #1utorefname\endcsname{#3}%
  \ifcsname #2refname\endcsname
    \csname #2refname\expandafter\endcsname\expandafter{\thmt@envname}{#3}{#4}%
  \fi
}
\declaretheorem[numberwithin=section]{theorem}
\declaretheorem[sibling=theorem]{proposition}
\declaretheorem[sibling=theorem]{corollary}
\declaretheorem[sibling=theorem]{lemma}
\declaretheorem[sibling=theorem,style=definition]{definition}
\declaretheorem[sibling=theorem,style=remark]{remark}
\declaretheorem[sibling=theorem,style=remark]{example}
\declaretheorem[sibling=theorem,style=remark]{fact}
\newcommand{\Z}{\mathbb{Z}}
\newcommand{\twist}{\operatorname{twist}}
\newcommand{\Span}{\operatorname{Span}}
\newcommand{\seq}[1]{{\left\langle{#1}\right\rangle}}
\newcommand{\rest}[1]{\! \upharpoonright_{#1}} 
\newcommand{\tth}{{}^{\textup{th}}}
\DeclareMathOperator{\range}{range}
\DeclareMathOperator{\cf}{cf}
\newcommand{\cero}{\mathbf{0}}
\newcommand{\diverge}{\!\!\uparrow}
\newcommand{\w}{\omega}
\newcommand{\vphi}{\varphi}
\renewcommand{\le}{\leqslant}
\renewcommand{\ge}{\geqslant}
\newcommand{\nle}{\nleqslant}
\newcommand{\Int}{\mathbb Z}
\newcommand{\Rat}{\mathbb Q}
\newcommand{\Tur}{\textup{\scriptsize T}}
\newcommand{\Nat}{\mathbb N}
\renewcommand {\div}{\!\divides\!}
\newcommand{\ndiv}{\notdivides}
\newcommand{\degd}{\mathbf{d}}
\newcommand{\sing}[1]{\textup{s}(#1)}
\newcommand{\Detach}[1]{\textup{Div}(#1)}
\DeclareMathOperator{\ZF}{ZF}
\newcommand{\PP}{\mathbb P}
\newcommand{\HH}{\mathcal H}
\newcommand \BlackBox[2]{\textup{BlackBox}(#1,#2)}
\newcommand \bdd{\textup{bdd}}
\newcommand \FDet{\textup{Div}_\bdd}
\title[Finding bases of free abelian groups is usually difficult]{Finding bases of uncountable free \\ abelian groups is usually difficult}
\author[N.~Greenberg]{Noam Greenberg} 
\address{School of Mathematics and Statistics, Victoria University of Wellington, Wellington, New Zealand}
\email{greenberg@msor.vuw.ac.nz}
\urladdr{\url{http://homepages.mcs.vuw.ac.nz/~greenberg/}}
\author[D.~Turetsky]{Dan Turetsky} 
\address{School of Mathematics and Statistics, Victoria University of Wellington, Wellington, New Zealand}
\author[L.B.~Westrick]{Linda Brown Westrick} 
\address{School of Mathematics and Statistics, Victoria University of Wellington, Wellington, New Zealand}
\thanks{Greenberg was supported by the Marsden Fund, a Rutherford Discovery Fellowship from the Royal Society of New Zealand and by the Templeton Foundation via the Turing centenary project ``Mind, Mechanism and Mathematics''. Westrick was supported by the Rutherford Discovery Fellowship as a postdoctoral fellow.}
\begin{document}

\begin{abstract}
	We investigate effective properties of uncountable free abelian groups. We show that identifying free abelian groups and constructing bases for such groups is often computationally hard, depending on the cardinality. For example, we show, under the assumption $V=L$, that there is a first-order definable free abelian group with no first-order definable basis. 
\end{abstract}

\maketitle

\section{Introduction}

How complicated is it to find a basis of a free abelian group? Can it be done recursively, as we do when building bases for vector spaces? Here by a basis we mean a subset which is both linearly independent and spans the whole group (with integer, rather than rational coefficients). The difficulty is that unlike vector spaces, free abelian groups can contain maximal linearly independent subsets which are not bases. For countable groups, there is a 
strengthening of linear independence, originally used by Pontryagin~\cite{Pontryagin}, which allows us to recover a recursive construction. This notion generalises $p$-independence, which is widely used in the study of torsion-free abelian groups. Recall that a subgroup~$H$ of a torsion-free abelian group~$G$ is \emph{pure} if $G\cap \Rat H = H$; that is, if for all~$n\in \Int$ and all~$h\in H$, if $n$ divides~$h$ in~$G$ then it also divides it in~$H$. 

\begin{definition}
\label{def:P-independent}
	Let~$G$ be a torsion-free abelian group. A subset $A\subseteq G$ is \emph{$P$-independent} if it is linearly independent and its span is a pure subgroup of~$G$. 
 \end{definition}

Note that any subset of a $P$-independent set is also $P$-independent. Let $\Z^\w = \bigoplus_{k\in \Nat} \Z$ denote the countably generated free abelian group. The following is implicit in Pontryagin's work, and is stated in the following way, for example, in Downey and Melnikov's \cite{DowneyMelnikov:EffectivelyCategoricalGroups} (who generalised it to completely decomposable groups).

\begin{proposition}
\label{prop:extension_of_finite_P-independece}
	Suppose that $B\subset \Z^\w$ is a finite $P$-independent subset; let $g\in \Z^\w$. Then there is a finite $P$-independent $B'\supseteq B$ such that $g\in \Span(B)$. 
\end{proposition}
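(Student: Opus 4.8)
The plan is to reduce everything to a single finitely generated free abelian group, namely the pure closure of $\Span(B\cup\{g\})$, and then exhibit the desired $B'$ as a basis of that group extending $B$. (The conclusion is of course meant to read $g\in\Span(B')$, and this is what I will produce.)

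First I would set $H=\Span(B\cup\{g\})$ and pass to its purification $H_* = \Z^\w \cap \Rat H$, i.e.\ the set of $x\in\Z^\w$ some nonzero integer multiple of which lies in $H$. Since $\Rat H$ is a finite-dimensional $\Rat$-subspace of $\bigoplus_{k}\Rat$, the group $H_*$ has finite rank, equal to that of $H$. Being a subgroup of the free abelian group $\Z^\w$, it is itself free, hence free of finite rank and in particular finitely generated. By construction $H_*$ is pure in $\Z^\w$, and $g\in H\subseteq H_*$.

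Next I would show that $B$ extends to a basis of $H_*$. Because $B$ is $P$-independent, $\Span(B)=\Z^\w\cap\Rat B$ is pure in $\Z^\w$; and since $B\subseteq H$ gives $\Rat B\subseteq \Rat H$, one has $H_*\cap\Rat B = \Z^\w\cap\Rat B = \Span(B)$, so $\Span(B)$ is pure already inside $H_*$. Purity makes the quotient $H_*/\Span(B)$ torsion-free, and it is finitely generated, hence free. The short exact sequence $0\to\Span(B)\to H_*\to H_*/\Span(B)\to 0$ therefore splits, so $\Span(B)$ is a direct summand: writing $H_* = \Span(B)\oplus K$ with $K$ free, and choosing a basis $D$ of $K$, the finite set $B':=B\cup D$ is a basis of $H_*$ containing $B$.

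Finally, $B'$ is linearly independent (being a basis) and its span is $H_*$, which is pure in $\Z^\w$, so $B'$ is $P$-independent; moreover $g\in H_* = \Span(B')$, as required. I expect the only point needing real care to be the reduction in the first two paragraphs: one must invoke that a finite-rank subgroup of $\Z^\w$ is free and finitely generated (via the theorem that subgroups of free abelian groups are free) and that a pure subgroup of a finitely generated free abelian group is a direct summand. Once $H_*$ is identified as a finite-rank free group in which $\Span(B)$ is a pure subgroup, the remaining steps are routine bookkeeping with rational spans.
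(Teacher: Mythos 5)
Your proof is correct. The paper itself does not prove this proposition (it is cited to Pontryagin and to Downey--Melnikov), but your argument --- purify $\Span(B\cup\{g\})$ to a finite-rank, hence finitely generated, free pure subgroup $H_*$, observe that $\Span(B)$ remains pure in $H_*$ so that the torsion-free finitely generated quotient $H_*/\Span(B)$ is free and the sequence splits, and take $B'=B\cup D$ for $D$ a basis of the complement --- is the standard proof and every step checks out.
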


Again, to be specific, $\Span(B)$ is the set of elements of~$G$ of the form $\sum m_i a_i$ where $a_i\in B$ and $m_i\in \Int$; $B$ is a basis of~$\Int^\w$ if it is linearly independent and spans~$\Int^\w$, if and only if $\Int^\w = \bigoplus_{b\in B} \Int b$. Of course every basis of $\Int^\w$ must be $P$-independent.  \Cref{prop:extension_of_finite_P-independece} tells us that a basis for~$\Int^\w$ can be built recursively, repeatedly extending finite $P$-independent subsets while ensuring that the next element of the group (in some arbitrary $\w$-enumeration of the elements of the group) belongs to the span of the basis that we are building. 

\smallskip

Can this process be mimicked when we are given an uncountable free abelian group? We know that there is no important difference between countable and uncountable vector spaces. A basis for a vector space can be built by transfinite recursion, extending as usual at successor steps and taking unions at limit stages. Searching the literature, we found no such construction for uncountable free abelian groups. The purpose of this paper is to show that in most cases, such a construction cannot be performed. One key point is that \cref{prop:extension_of_finite_P-independece} heavily relies on the fact that~$B$ is finite.\footnote{This is a common theme in the investigation of effective properties of uncountable objects: there is a significant difference between finiteness and boundedness. See for example \cite{Memoirs,LO1}.} A recursive construction can get stuck at a limit stage: we can find elements $a_1,a_2,\dots, $ of a free abelian group~$G$ such that each finite initial segment $\{a_1,a_2,\dots, a_n\}$ can be extended to a basis of~$G$, but such that the countable set $\{a_1,a_2,\dots\}$ cannot be extended to a basis of~$G$. We thank Alexander Melnikov for pointing out to us the following construction, which was known to Fuchs. 

\begin{example}
\label{exmp:Melnikov}
	Let~$G = \Z^{\w+1}$ be a copy of the countably generated free abelian group, with a basis reordered in order-type~$\w+1$: let $\{e_0,e_1,e_2,\dots, e_\w\}$ be a basis of~$G$. For $i<\w$ let $u_i = p_i e_i + e_\w$, where $p_0,p_1,\dots$ is an enumeration of the prime numbers. 

	The set $U = \{u_0,u_1,\dots\}$ is $P$-independent: if $p_j\div \sum a_ip_i e_i + \sum a_i e_\w$ then $p_j \div a_i p_i$ and $p_j \div \sum a_i$; it follows that $p_j\div a_i$ for all $i\ne j$, and so must also divide~$a_j$. \Cref{prop:extension_of_finite_P-independece} implies that any finite subset of~$U$ can be extended to a basis of~$G$. However,~$U$ cannot be extended to a basis of~$G$. Suppose otherwise. Extending to a basis and taking a finite subset, we can find a finite set~$V\subset G$ such that $e_\w\in \Span(V)$ and $V\cup U$ is $P$-independent. There is some~$n$ such that $V$ is spanned by $ \{ e_0, e_1,\dots,e_\w\}\setminus \{e_n\}$. Let $H = \Span(V\cup \{u_n\})$. We show that~$H$ is not pure, contradicting the $P$-independence of $V\cup U$. This is because $p_ne_n  = u_n-e_\w$ is an element of~$H$; but $e_n\notin H$. In fact for any $h\in H$, if $h = \sum_{i\le \w} \alpha_i e_i$ then $p_n \div \alpha_n$; $u_n$ is the only generator that can contribute anything in the $n\tth$ standard coordinate.\footnote{More formally: $h = b u_n + \sum b_i v_i$ for $v_i\in V$, and for each~$i$, $v_i = \sum_{j\le \w} c_{i,j} e_j$, with $c_{i,n} = 0$ for all~$i$.}
\end{example}

Of course, one could imagine that there is another property, even stricter than $P$-independence, adherence to which will allow us to pass limit stages without breaking down. We show that there cannot be any such property. 

\smallskip

What do we actually mean by that statement? If~$G$ is a free abelian group then there \emph{is} a property~$Q$ of subsets of~$G$ (say of smaller cardinality than~$G$) such that: 
\begin{itemize}
	\item Every subset satisfying~$Q$ is linearly independent (or even $P$-independent);
	\item The analogue of \cref{prop:extension_of_finite_P-independece} holds: for every subset~$A$ satisfying~$Q$ and every $g\in G$ there is some $A'\supseteq A$ satisfying~$Q$ such that $g\in \Span(A)$; and
	\item If $A = \bigcup_{\alpha<\lambda} A_\alpha$ is a union of subsets satisfying~$Q$, then $A$ satisfies~$Q$ as well. 
\end{itemize}
Namely, we can let $Q$ hold of the subsets of a fixed basis~$B$ of~$G$. And in turn, we can use~$Q$ to ``recursively'' build~$B$. What we mean by the statement above is that there is no way to obtain such a property~$Q$ if we are just handed the group table for~$G$ and don't have a basis to begin with. Informally, we want to show that it is impossible to only use the group operation of a free abelian group to build a basis. 

To make this statement formal we use the tools of mathematical logic, in particular definability and computability. We fix an uncountable cardinal~$\kappa$ and show that 
\begin{itemize}
	\item For any $\Delta^1_1$ $\kappa$-Turing degree~$\degd$, there are $\kappa$-computable free abelian groups~$G$ with no $\degd$-computable basis. In particular, there is a first-order definable free abelian group of size~$\kappa$ with no first-order definable basis. 
\end{itemize}
Before we explain further, we state two important caveats. The first is that this non-definability result holds for most cardinals $\kappa$ but not for all of them. It is known to fail at some singular cardinals, such as $\aleph_\w$. Even among regular cardinals, we do not know how to show this for weakly compact ones. The second caveat is that throughout we make a non-trivial set-theoretic assumption: that all sets are constructible. While this is often harmless when uncountable computability is concerned, it does leave open the possibility that the picture is different under other, possibly strong, set-theoretic assumptions.

\subsection*{Groups in computable algebra and set theory} 
\label{sub:computable_model_theory}

The study of effective procedures in group theory goes back to work of Max Dehn \cite{Dehn} on finitely presented groups, and in fields, rings and vector spaces to work of Hermann \cite{Hermann}, van der Waerden \cite{VanDerWaerden}, and explicitly using computability to Rabin \cite{Rabin}, Maltsev \cite{Malcev}, Fr\"{o}hlich and Shepherdson \cite{FS56}, and Metakides and Nerode (for example \cite{MetakideRecSpaces}). The basic idea is to study how effective algebraic objects and processes are. For example, famously, Novikov and Boone (see for example \cite{Novikov,Boone}) showed that the word problem in groups may fail to be solved effectively; the same holds for conjugacy and isomorphism questions. Similarly, Higman's embedding theorem \cite{Higman} characterises embeddability into finitely presented groups using an effective criterion. 

The key notion is that of a \emph{computable group}: this is one whose collection of elements is a computable set (say of natural numbers), and the group operation can be performed effectively (computably). Key questions are: (a) which groups have computable copies? and (b) how similar or different are various computable copies of the same group? One possible answer for the second question is encapsulated in the notion of \emph{computable categoricity}, meaning that all computable copies are isomorphic via computable isomorphisms; informally, this means that all computable copies have the same computable properties. For example, finitely generated free abelian groups are computably categorical, since a bijection between two finite bases effectively lifts to an isomorphism of the groups. 

Very few groups are computably categorical, and so it makes sense to consider weakenings of this notion by allowing the help of the jump operator. For example, we say that a group is \emph{$\Delta^0_2$-categorical} if any two computable copies are isomorphic via a $\Delta^0_2$ ($\cero'$-computable) isomorphism. For a free abelian group, the complexity of isomorphisms with a standard computable copy (with a computable basis) is the same as the complexity of bases. In~\cite{DowneyMelnikov:EffectivelyCategoricalGroups}, Downey and Melnikov use \cref{prop:extension_of_finite_P-independece} to show that the countably generated free abelian group is $\Delta^0_2$-categorical, equivalently, that every computable copy of the countably generated free abelian group has a~$\Delta^0_2$ basis.\footnote{This is sharp: in this paper we show that there is a computable copy of $\Z^\w$, every basis of which computes $\emptyset'$.} 

\smallskip

Uncountable free groups were studied by algebraists and set theorists. Best known is Shelah's work on the Whitehead problem \cite{Shelah:Whitehead}. Two main questions which were addressed are: (1) for which cardinals~$\lambda$ are there $\lambda$-free groups which are not $\lambda^+$-free? (2) Is it possible to axiomatise the class of free abelian groups in infinitary logic? The latter question is related to results below on the complexity of the collection of free abelian groups of a fixed cardinality. Some techniques used for the investigation of these questions are related to ones we use below. These investigations though were not concerned with questions of effectiveness. See for example \cite{Hill:NewCriteriaI,Eklof:FreenessAtRegulars,Shelah:SingularCompactness} and the book \cite{EklofMekler}.


\subsection*{Uncountable computable algebra} 
\label{sub:uncountable_computable_model_theory}

The tools of traditional computability are restricted to investigating countable groups, since the basic objects that can be manipulated by computers are hereditarily finite. To be able to make sense of the questions above for uncountable groups we use an extension of computability to uncountable domains. Several approaches have been suggested (see \cite{EMUbook}). In this paper we use \emph{admissible computability}, as described in \cite{GreenbergKnight}, to investigate uncountable computable model theory. This approach was successfully used in \cite{LO1,LO2} to investigate uncountable linear orderings. An abstract investigation of computable categoricity in this setting is given in \cite{Julia:Arithmetic,UnfinishedCC}. 

There are several ways to describe admissible computability. K\"{o}pke~\cite{Koepke:OrdinalComputability} used Turing machines with transfinite tapes. The original way, and quickest, is to use definability. Let~$\kappa$ be a cardinal. The universe for $\kappa$-computability is $H(\kappa)$, the collection of all sets whose transitive closure is of size smaller than~$\kappa$. A set is defined to be \emph{$\kappa$-c.e.}\ if it is $\Sigma^0_1$-definable over $H(\kappa)$ (with parameters). A set is $\kappa$-computable if it is both $\kappa$-c.e.\ and co-$\kappa$-c.e.; a function is partial $\kappa$-computable if its graph is $\kappa$-c.e. The main assumption which makes computability work is that there is a $\kappa$-computable isomorphism between~$\kappa$ and the universe~$H(\kappa)$. Most commonly this is achieved by assuming that every set in~$H(\kappa)$ is constructible, in which case $H(\kappa)= L_\kappa$. Note that this holds for $\kappa=\w$, and that $\w$-computability is the familiar notion of Turing computability. The main tool in $\kappa$-computability is defining computable functions recursively. Formally, if $I\colon H(\kappa)\to H(\kappa)$ is $\kappa$-computable then there is a unique function $f\colon \kappa\to H(\kappa)$ such that for all $\alpha<\kappa$, $f(\alpha) = I(f\rest \alpha)$; this function is $\kappa$-computable. The main point is that even when $\kappa$ is singular, $f\rest \alpha \,\in H(\kappa)$; we say that~$H(\kappa)$ is \emph{admissible}. 

For more details on $\kappa$-computability see \cite{SacksBook,GreenbergKnight}. As we mentioned above, throughout this paper we assume that $V=L$.


\subsection*{Identifying free groups} 
\label{sub:identifying_free_groups}

When investigating the complexity of free abelian groups we come across a closely related question: how complicated is it to tell if a given (torsion-free and abelian) group is free? Informally, the idea is that if there were some effective or definable way to take the group operation of a free group and produce a basis, we could start with any group, attempt to build a basis according to this procedure, and see if we succeed or fail. For example, for countable groups this approach, using the Downey-Melnikov procedure described above, gives an upper bound for the complexity of the collection of free groups (it is $\Pi^0_3$, in fact complete at that level). 
On the other hand it stands to reason that a procedure that tells whether a given group is free can be used to get a proof of this fact, namely a basis. Thus the complexity of the two problems, of identifying free groups, and of building bases, is often related. We shall see though that in some cases this intuition does not seem to hold.

There is a natural upper bound to the complexity of the collection of free groups, namely $\Sigma^1_1$ --- the defining formula is ``the group has a basis''. A proof that this collection is $\Sigma^1_1$-complete would show that there is no simpler way of identifying free groups; a proof that this collection is much simpler (say first-order definable) would show that there is some kind of effective or definable procedure to find out whether a group is free, without having to divine a basis out of nowhere. Our first theorem settles the complexity of the collection of free abelian groups for regular uncountable cardinals. 

\begin{theorem}[$V=L$] \label{thm:identifying_free_groups}
	Let~$\kappa$ be a regular uncountable cardinal. 
	\begin{enumerate}
		\item If $\kappa$ is not weakly compact then the collection of free abelian groups is $\mathbf{\Sigma}^1_1$-complete. If further~$\kappa$ is a successor cardinal, or the least inaccessible cardinal, then this collection is $\Sigma^1_1$-complete. 
		\item If~$\kappa$ is weakly compact then the collection of free abelian groups is ${\mathbf{\Pi}}^0_1$-complete (indeed it is $\Pi^0_1(\emptyset')$-complete) in the set of groups. The index-set\footnote{Using a $\kappa$-computable listing $\seq{W_\alpha}_{\alpha<\kappa}$ of all $\kappa$-c.e.\ sets (obtained from a universal $\Sigma^0_1(L_\kappa)$ predicate), the notions of an index for a $\kappa$-computable object and of an index-set are identical to the familiar one from Turing computability. 
} of the computable free abelian groups is $\Pi^0_2$-complete. 
	\end{enumerate}
\end{theorem}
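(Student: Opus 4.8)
The plan is to route everything through the classical correspondence between the freeness of a $\kappa$-free group and the nonstationarity of the obstruction set of a filtration. Recall that for regular uncountable $\kappa$, a group $G$ of cardinality $\kappa$ which is $\kappa$-free (every subgroup of smaller cardinality is free) is itself free exactly when, for one (equivalently every) continuous increasing filtration $G = \bigcup_{\alpha<\kappa} G_\alpha$ by subgroups of size $<\kappa$, the set $E(G) = \{\alpha<\kappa : G/G_\alpha \text{ is not } \kappa\text{-free}\}$ is nonstationary; this is the statement that the $\Gamma$-invariant of $G$ vanishes. I would first install this dictionary in the $\kappa$-computable setting: a $\kappa$-computable group admits a $\kappa$-computable filtration, and one checks that ``$G$ is $\kappa$-free'' and the membership relation for $E(G)$ are of controlled complexity.

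For part (1) the engine is a uniform assignment $E \mapsto G_E$ sending a code for a set $E \subseteq \{\alpha<\kappa : \cf\alpha = \omega\}$ to a $\kappa$-computable $\kappa$-free group with $\Gamma(G_E) = [E]$. This is the Eklof--Mekler ``$E$-group'': generators $z_\beta$ for $\beta<\kappa$ together with chains $y_{\alpha,n}$ for $\alpha\in E$, $n<\omega$, glued by relations of the shape $p_n y_{\alpha,n+1} = y_{\alpha,n} + z_{\alpha_n}$ along a fixed (in $L$, definable) ladder system $\langle \alpha_n\rangle \to \alpha$, so that each $\alpha\in E$ carries a localized copy of the obstruction of \Cref{exmp:Melnikov}. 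I would verify that $E\mapsto G_E$ is $\kappa$-computable uniformly in the code and that $G_E$ is free iff $E$ is nonstationary, which exhibits a reduction of the nonstationary ideal $\mathrm{NS}_\kappa$ to the freeness problem. Since ``there is a basis'' gives the $\Sigma^1_1$ upper bound, it then remains to invoke that, for $\kappa$ regular and not weakly compact, $\mathrm{NS}_\kappa$ is $\mathbf{\Sigma}^1_1$-complete, and lightface $\Sigma^1_1$-complete once $V=L$ supplies a lightface-definable complete stationary instance --- which it does for a successor cardinal and for the least inaccessible cardinal, by its outright definability.

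For part (2) the decisive input is the compactness of freeness at a weakly compact cardinal: if $\kappa$ is weakly compact then every $\kappa$-free group of size $\kappa$ is already free, so ``$G$ is free'' collapses to ``$G$ is $\kappa$-free''. I would then read off the upper bounds: ``$\kappa$-free'' asserts that every subgroup $H \le G$ of size $<\kappa$ is free, a first-order universal quantifier over $H(\kappa)$ whose matrix ``$H$ is free'' is $\Sigma_1$ over $L_\kappa$ and hence $\emptyset'$-decidable, giving $\Pi^0_1(\emptyset')$ over the set of groups; adjoining the $\Pi^0_2$ check that an index codes a total group yields the $\Pi^0_2$ bound for the index set. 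For hardness I would reduce a complete $\Pi^0_1(\emptyset')$ (respectively $\Pi^0_2$) set by building computable groups that fail $\kappa$-freeness at a prescribed level precisely when the governing universal condition fails, again splicing in copies of the obstruction of \Cref{exmp:Melnikov} but now positioned so that the answer depends only on a single controlled level rather than on stationarity.

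The main obstacle is the set-theoretic heart of part (1): proving that $\mathrm{NS}_\kappa$ is genuinely $\Sigma^1_1$-hard exactly when $\kappa$ is not weakly compact, and pinning the lightface version under $V=L$. Weak compactness is precisely the $\Pi^1_1$-reflection that would let a disjoint club be found definably --- thereby both validating the compactness used in part (2) and lowering the complexity --- so the two parts meet along this dividing line. The delicate points are which stationary sets are definable without parameters (forcing the boldface/lightface split) and the careful bookkeeping that keeps $E\mapsto G_E$ and the level-by-level constructions $\kappa$-computable.
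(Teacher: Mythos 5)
Your skeleton is right --- reduce freeness to nonstationarity of an obstruction set, split along weak compactness, and use compactness of $\kappa$-freeness for part (2) --- and the static Eklof--Mekler ``$E$-group'' is a legitimate substitute for the paper's dynamic twist construction (the paper itself notes the static version in \cite[IV]{EklofMekler}). But there is a genuine gap where you write that ``it remains to invoke'' the $\mathbf{\Sigma}^1_1$-completeness of $\mathrm{NS}_\kappa$. First, that fact is itself a substantial part of what must be proved here; under $V=L$ it comes out of Jensen's fine structure, via the class $E$ of \cref{def:class_E} and the reflection machinery of \cref{lem:reflecting_Pi11_and_Sigma11_sets_on_F,cor:Sigma11_completeness_of_containing_a_club}. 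Second, and more importantly, $\Sigma^1_1$-completeness of $\mathrm{NS}_\kappa$ alone does not suffice for your reduction: for ``$G_E$ is free iff $E$ is nonstationary'' you need $G_E$ to be $\kappa$-free, i.e.\ every $G_\alpha$ free, which forces $E$ to be \emph{non-reflecting} (nonstationary in every $\alpha<\kappa$), not merely a set of cofinality-$\omega$ ordinals. If the reduction hands you a nonstationary $E$ that reflects at some $\alpha$, then $G_\alpha$ is already non-free, hence so is $G_E$, and the biconditional fails. This is exactly why the paper routes the reduction through Jensen's class $E$, which does not reflect at singular ordinals (\cref{thm:square_really}): at a successor $\kappa$ one arranges $U\cap\kappa^-=\emptyset$ so every relevant ordinal is singular, and at a describable inaccessible one must intersect with $F(B,\vphi)$ (\cref{prop:F_does_not_reflect}) to kill reflection at regular cardinals below $\kappa$ --- which is also precisely where weak compactness becomes the obstruction, since there every stationary set reflects.

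Two further points you should not wave away. At an inaccessible $\kappa$ the class $E$ (and hence your target set) is only $\kappa$-c.e., not $\kappa$-computable, so the group cannot be built directly along a known filtration; the paper's proof of \cref{thm:identifying_inaccessible_free_groups} has to approximate and re-index the filtration as ordinals are enumerated into $U$, and verifying that each intermediate group stays free is the delicate part. And for the hardness half of part (2), the $\Pi^0_2$-completeness of the index set is not obtained by ``positioning the obstruction at a single controlled level'' in the abstract: the paper's argument (\cref{prop:weakly_compact:Pi02_complete}) runs the successor-case twisting construction up to $\alpha^+$, exploiting that $E$ is stationary in $\alpha^+$ but does not reflect on $(\alpha,\alpha^+)$ and that $\alpha\in A$ iff $L_{\alpha^+}\models\alpha\in A$, while padding with copies of $\Z$ to keep the group total. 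Your proposal correctly locates where the difficulty lives, but the non-reflection requirement and the effectivity issues at inaccessibles are the content, not bookkeeping.
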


We should be more formal about what we actually mean. Just as for $\kappa=\w$, if $\kappa$ is regular then we can discuss the complexity of subsets of $2^\kappa$ (or $\kappa^\kappa$) using definability. The subset of $2^\kappa$ defined by a formula~$\vphi$ (in the language of set theory) is the collection of $A\subseteq \kappa$ such that the structure $(H(\kappa);\in, A)$ satisfies~$\vphi$. We allow quantification over subsets of~$\kappa$; for example, a $\Sigma^1_1$ formula $\exists X\,\vphi$ holds of~$A$ if there is some $B\subseteq \kappa$ such that $(H(\kappa);\in,A,B)\models \vphi$. In all of these formulas we allow parameters from the structure $H(\kappa)$. We also use the usual conventions for boldface classes, to denote that we allow a fixed subset predicate. 

Lightface statements of completeness are effective. We use a rich topology for~$2^\kappa$: basic open sets are specified by specifying fewer than~$\kappa$ bits. A partial continuous function from~$2^\kappa$ to itself is defined by a \emph{functional}, a set~$\Phi$ of pairs $(p,q)$ where $p,q\in 2^{<\kappa}$, and satisfying the requirement that if $(p,q), (p', q') \in \Phi$ and $p$ and $p'$ are comparable, then $q$ and $q'$ are also comparable; the defined function maps $A\in 2^\kappa$ to~$\Phi(A)$ defined by $q\prec \Phi(A)$ if and only if there is some $p\prec A$ such that $(p,q)\in \Phi$. If~$\Phi$ itself is $\kappa$-c.e.\ then the induced function is called partial $\kappa$-computable. We remark that just as
in the case $\kappa=\w$, relative $\kappa$-computability can be defined using these maps; we say that $X\in 2^\kappa$ is $\kappa$-computable from~$Y\in 2^\kappa$ if $f(Y)=X$ for some partial $\kappa$-computable~$f$.


When we say that the collection of free abelian groups is $\Sigma^1_1$ 
complete, we mean that for any $\Sigma^1_1$ set~$R\subseteq 2^\kappa$ there is a $\kappa$-computable function~$f\colon 2^\kappa\to 2^\kappa$ such that for all $A\in 2^\kappa$, $A\in R$ if and only if $f(A)$ is (the graph of the group operation of) a free abelian group. This also gives an index-set result: it shows that the collection of indices of partial $\kappa$-computable functions 
$f:\kappa\rightarrow\kappa$ which are total and compute (the graph of the group operation of) a free abelian group is complete among all~$\Sigma^1_1$ subsets of~$\kappa$.

By boldface completeness we mean to allow an oracle. That is, $\mathbf{\Sigma}^1_1$-completeness stated above says that there is some~$A\subseteq \kappa$ such that the collection of free abelian groups is $\Sigma^1_1(A)$, and for any $\Sigma^1_1(A)$ set~$R$ there is an $A$-computable function~$f$ which reduces~$R$ to the collection of free abelian groups.  

We also remark that the first part of \cref{thm:identifying_free_groups} can be relativised to any oracle. Namely, if the collection of free abelian groups of size~$\kappa$ is $\Sigma^1_1(A)$-complete for some $A\in 2^\kappa$, then it is also $\Sigma^1_1(B)$-complete for all~$B\in 2^\kappa$ which $\kappa$-compute~$A$.

We remark though that when we later discuss singular cardinals we cannot relativise to any oracle, as for many oracles~$A$ the structure $(H(\kappa);\in,A)$ will not be admissible.


\subsection*{The complexity of bases} 
\label{sub:the_complexity_of_bases}

\Cref{thm:identifying_free_groups} gives us information about the complexity of bases of free groups. The fact that there is a complete $\Sigma^1_1$ subset of~$\kappa$ implies:

\begin{corollary}[$V=L$] \label{cor:no_upper_bound_for_bases}
	Let~$\kappa$ be a successor cardinal. For any $\Delta^1_1$ set~$X\in 2^\kappa$ there is a $\kappa$-computable free abelian group which has no $X$-computable basis. 
\end{corollary}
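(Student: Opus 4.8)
The plan is to argue by contradiction, using the lightface $\Sigma^1_1$-completeness supplied by \cref{thm:identifying_free_groups}. Since $\kappa$ is a successor cardinal it is regular and not weakly compact, so by part~(1) of that theorem the index set
\[
\mathcal F = \{\, e<\kappa : \Phi_e \text{ is total and is (the graph of the group operation of) a free abelian group}\,\}
\]
is $\Sigma^1_1$-complete in the lightface sense. I would fix the $\Delta^1_1$ set $X$ and assume, toward a contradiction, that \emph{every} $\kappa$-computable free abelian group admits an $X$-computable basis. The goal is then to show that this assumption collapses the complexity of $\mathcal F$ down to $\Delta^1_1$, which is impossible.

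The heart of the argument is a single complexity computation. Under the contradictory assumption, for an index $e$ naming a total $\kappa$-computable group $G_e$, the group $G_e$ is free if and only if it has an $X$-computable basis, i.e.\ if and only if there is an index $j<\kappa$ such that the $j$-th $X$-computable function $\Phi_j^X$ is total and is the characteristic function of a basis of $G_e$. The key observation is that the witnessing quantifier ``$\exists j$'' ranges over $\kappa$, so it is an ordinary first-order quantifier over $H(\kappa)$ rather than a quantifier over subsets of $\kappa$; this is exactly where the hypothesis that the basis is $X$-computable is used. I would then verify that the matrix is arithmetic in $X$ uniformly in $e$ and $j$: totality of $\Phi_j^X$ is $\Pi^0_2(X)$, linear independence of the set it computes is $\Pi^0_1(X)$, and spanning (for every $g\in G_e$ there is a finite integer combination from the set equal to $g$) is $\Pi^0_2(X)$. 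Conjoining the $\Pi^0_2$ condition ``$\Phi_e$ is a total group'', membership in $\mathcal F$ is thus expressible by an arithmetic-in-$X$ formula. Since $X$ is $\Delta^1_1$, every set arithmetic in $X$ is again $\Delta^1_1$, so $\mathcal F\in\Delta^1_1$.

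This contradicts the completeness of $\mathcal F$. If a $\Sigma^1_1$-complete subset of $\kappa$ were $\Delta^1_1$, then pulling back along the $\kappa$-computable reductions (under which $\Delta^1_1$ is closed) every $\Sigma^1_1$ subset of $\kappa$ would be $\Delta^1_1$, and in particular $\Sigma^1_1=\Pi^1_1$. But the $\kappa$-analytical hierarchy is proper at this level: a universal $\Sigma^1_1$ subset of $\kappa\times\kappa$ together with the usual diagonalization yields a $\Pi^1_1$ set that is not $\Sigma^1_1$, so $\Sigma^1_1\ne\Pi^1_1$. Hence the contradictory assumption fails, and some $\kappa$-computable free abelian group has no $X$-computable basis, as required.

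I expect the main obstacle to be the complexity bookkeeping in the middle step: one must be scrupulous that ``being a basis'' and the totality of $\Phi_j^X$ are genuinely arithmetic in $X$, and above all that the existential search for a basis is a quantifier over indices in $\kappa$ rather than over $2^\kappa$. It is precisely this collapse of a set quantifier to a number quantifier, licensed by the $X$-computability of the basis, that drives the whole argument. A secondary point to confirm is that the lightface (rather than merely boldface) form of $\Sigma^1_1$-completeness is available, which is why the hypothesis that $\kappa$ is a \emph{successor} cardinal is invoked, so that $\Delta^1_1(X)=\Delta^1_1$ lands the contradiction inside the lightface hierarchy.
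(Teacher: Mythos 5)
Your proposal is correct and is exactly the argument the paper intends: the corollary is stated as an immediate consequence of the lightface $\Sigma^1_1$-completeness of the (index set of) $\kappa$-computable free abelian groups for successor $\kappa$, via precisely your observation that an $X$-computable basis turns the set quantifier ``there exists a basis'' into a first-order search over indices, placing the index set in a class arithmetic in $X$ and hence in $\Delta^1_1$, contradicting the properness of the hierarchy. The only quibble is cosmetic: linear independence of the set computed by $\Phi_j^X$ is more safely counted as $\Pi^0_2(X)$ than $\Pi^0_1(X)$ (one must first confirm membership of the candidate tuple in the set), but this changes nothing since all that matters is that the whole matrix is arithmetic in $X$.
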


We note that the class $\Delta^1_1(L_\kappa)$ is huge. It properly contains all $\kappa$-hyperarithmetic sets (under any reasonable definition of that concept), all sets in the least admissible set beyond $L_\kappa$ (or the least model of $\textrm{ZF}^-$), and more. 

\smallskip

One could hope for more. Can we not only avoid lower cones but code complicated information into all bases of a group? We will show that this is not the case; bases can be built by forcing and so can avoid computing even simple sets. 

\begin{theorem}[$V=L$] \label{thm:coding_into_bases}
	Let~$\kappa$ be a regular uncountable cardinal.
	\begin{itemize}
		\item If $\kappa$ is a successor of a regular uncountable cardinal which is not weakly compact, 
		let $D = \emptyset''$ (the complete $\Sigma^0_2(L_\kappa)$ set). 
		\item Otherwise 
		let $D = \emptyset'$ (the complete $\Sigma^0_1(L_\kappa)$ set). 
	\end{itemize}
	Then:
	\begin{enumerate}
		\item There is a $\kappa$-computable free abelian group, all of whose bases $\kappa$-compute~$D$. 
		\item If $X \nle_{\kappa} D$ then every $\kappa$-computable free abelian group has a basis which does not $\kappa$-compute~$X$. 
	\end{enumerate}
\end{theorem}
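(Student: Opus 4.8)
The two clauses are dual, and I would organise the proof around the single obstruction to a recursive basis construction exhibited in \cref{exmp:Melnikov}. Clause~(1) will force the resolution of that obstruction to be legible in every basis, so that every basis $\kappa$-computes $D$; clause~(2) will show that the same resolution can always be carried out using $D$ as an oracle. For clause~(2) I would in fact aim for the stronger statement that every $\kappa$-computable free abelian group has a basis $B$ with $B\le_\kappa D$. This suffices: if $B\le_\kappa D$ and we had $X\le_\kappa B$, then $X\le_\kappa D$, contradicting $X\nle_\kappa D$; hence $X\nle_\kappa B$. So no diagonalisation against a particular $X$ is needed --- the whole of clause~(2) reduces to building one sufficiently low basis.

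For clause~(1) I would give a $\kappa$-computable presentation of a fixed free group $\bigoplus_{\alpha<\kappa}\Z$ that is deliberately tangled, so that extracting any basis decodes $D$. The building block is the Melnikov obstruction: a block of generators $e^\alpha_i$ ($i<\omega$) with a distinguished $e^\alpha_\omega$ and elements $u^\alpha_i = p_i e^\alpha_i + e^\alpha_\omega$, for which $\{u^\alpha_i : i<\omega\}$ is $P$-independent but not completable to a basis, since completing it forces a divisibility witness for $e^\alpha_\omega$ exposing which coordinate was sacrificed (the purity argument of \cref{exmp:Melnikov}). I would lay out one block per candidate $\alpha\in D$ and let the approximation to $D$ drive how the blocks are presented and glued, so that the purity obstruction in block $\alpha$ is realised exactly according to the value $D(\alpha)$, while the ambient group stays free. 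Since both the group and the decoding are $\kappa$-computable, every basis then $\kappa$-computes $D$, giving clause~(1) with $D=\emptyset'$ via the $\Sigma^0_1$ approximation. For the $\Sigma^0_2$ case --- $\kappa=\mu^+$ with $\mu$ regular uncountable and not weakly compact, $D=\emptyset''$ --- I would replace each block by a $\mu$-indexed tower of such obstructions laid along a branch, using combinatorial scaffolding available under $V=L$ (a stationary subset of $\mu$ that does not reflect, available because $\mu$ is not weakly compact, together with $\diamond$), arranged so that navigating the tower to a basis requires, and reveals, one $\exists\forall$ (that is, $\Sigma^0_2$) bit.

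For clause~(2) I would build a low basis by transfinite recursion relative to $D$, of length $\kappa$. The recursion maintains a partial basis $B_\gamma$ together with a certificate that $\Span(B_\gamma)$ is a pure free direct summand of $G$ whose complement is still free, so that $B_\gamma$ can be completed to a genuine basis; the certificate is to be chosen so that $D$ can verify it. At a successor step I absorb the next group element into the span using the bounded extension property of \cref{prop:extension_of_finite_P-independece}, together with a requirement ensuring every element of $G$ is eventually covered. All the difficulty is concentrated at limit stages $\delta$, where I must test whether the union $\bigcup_{\gamma<\delta} B_\gamma$ still carries the certificate and, if it has degenerated into a Melnikov-type trap, repair it before proceeding. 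Because $D$ decides the certificates, the entire recursion runs $D$-computably and produces $B\le_\kappa D$.

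The heart of the matter, and the main obstacle, is exactly this limit-stage test, and it is here that the cardinal $\kappa$ fixes the complexity of $D$. A union of $P$-independent conditions need not be completable --- this is precisely the failure in \cref{exmp:Melnikov} --- so the certificate must be strong enough to survive the unions taken at limits, and the cost of recognising a good condition at a limit $\delta$ scales with $\cf\delta$ and with the freeness-reflection behaviour of $G$ at $\delta$. When $\kappa=\mu^+$ with $\mu$ regular uncountable and not weakly compact, the dangerous limits are those $\delta$ with $\cf\delta=\mu$, and recognising a good condition there amounts to a test of genuine $\Sigma^0_2$ complexity that cannot be simplified --- non-reflection at $\mu$ is what blocks the simplification --- pushing the oracle up to $\emptyset''$. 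In every other case the certificate collapses to a single $\Sigma^0_1$ condition: for $\mu$ singular by Shelah's singular compactness, for $\mu$ weakly compact by stationary reflection at $\mu$, and for inaccessible $\kappa$ because the relevant cofinalities stay low; then $\emptyset'$ suffices. The real work is to make this dichotomy exact and to check that the oracle needed to run the recursion in clause~(2) matches, level for level, the coding power of clause~(1).
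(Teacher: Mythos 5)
Your reduction of clause~(2) to ``build one basis $B\le_\kappa D$'' is false, and it is refuted by the paper's own \cref{cor:no_upper_bound_for_bases}: for $\kappa$ a successor cardinal and \emph{any} $\Delta^1_1$ set $X$ --- in particular $X=\emptyset''$ --- there is a $\kappa$-computable free group with no $X$-computable basis. So in the very case where $D=\emptyset''$ there are $\kappa$-computable free groups with no $D$-computable basis at all, and your ``sufficiently low basis'' does not exist. Cone avoidance is genuinely weaker than lying in a lower cone, and the correct mechanism is forcing, not a $D$-recursive construction: one shoots a club through the detachment set $\Detach{G}$ using closed bounded conditions ordered by end-extension (\cref{thm:limits_of_detachment}). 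That forcing is $\kappa$-strategically closed (the strategy uses some club inside $\Detach{G}$ as a parameter, which may be far more complicated than $D$), and a sufficiently generic club avoids computing any $X\nle_\kappa\Detach{G}$; a basis is then recovered from the club by \cref{lem:computing_bases_and_clubs}. Clause~(2) then follows because $\Detach{G}\le_\kappa D$ in each case (\cref{prop:computing_detachment_set_is_sometimes_easy} and the remark that detachment among $\kappa$-finite groups is $\kappa$-c.e., hence $\Detach{G}$ is always $\Pi^0_2$). Your diagnosis of where the $\emptyset'$/$\emptyset''$ dichotomy comes from is also not the real mechanism: it is not the cofinality of dangerous limit stages but the complexity of the single relation ``$H$ detaches in $K$'' for $\kappa$-finite $H\subseteq K$, which is $\Sigma^0_1(L_\kappa)$-complete exactly when $\kappa^-$ is regular uncountable and not weakly compact (because freeness of groups of size $\kappa^-$ is then $\Sigma^1_1(L_{\kappa^-})$-complete, and $\Sigma^0_1(L_\kappa)$ questions translate into $\Sigma^1_1(L_{\kappa^-})$ questions), and is $\kappa$-computable or $\emptyset'$-computable in the remaining cases.

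For clause~(1), your $\emptyset'$ coding is essentially the paper's \cref{thm:coding_one_jump} (divisibility coding; the Melnikov blocks are an unnecessarily heavy version of the same idea). But your $\emptyset''$ coding is a gesture rather than an argument: ``a $\mu$-indexed tower of obstructions along a branch\ldots arranged so that navigating the tower reveals one $\Sigma^0_2$ bit'' does not explain what a basis actually gives you. What a basis gives you, uniformly, is a club through $\Detach{\bar G}$ (\cref{lem:computing_bases_and_clubs}), from which you can \emph{enumerate} the $\Sigma^0_1$ facts ``$H(\alpha)\div G_s(\alpha)$'' for $s$ in the club; so the right target is a $\Pi^0_2$-complete set $P$ coded into the detachment set, with $P$ padded so that enumerating it suffices to compute it. To realise ``$H(\alpha)\div G(\alpha)$ iff $\alpha\in P$'' one needs the BlackBox gadget of \cref{cor:blackbox} --- a $\kappa$-finite free extension in which $H$ detaches iff a given $\Sigma^0_1$ sentence holds, built from \cref{prop:detachment_is_complete_at_most_successors} --- together with a movable-marker construction that repeatedly retwists $H(\alpha)$ as the $\Sigma^0_2$ approximation changes, while keeping every initial segment free. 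None of this appears in your sketch, and without it the claim that ``every basis reveals the $\Sigma^0_2$ bit'' is unsupported.
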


In fact coding in~$\emptyset'$ is not complicated; we will show that for any cardinal~$\kappa$ there is a $\kappa$-computable free abelian group, all of whose bases compute~$\emptyset'$. The proof covers $\kappa=\w$ and singular cardinals as well. 


\subsection*{Singular cardinals} 
\label{sub:singular_cardinals}

Singular cardinals pose many difficulties. If~$\kappa$ is singular then for many sets~$A$, $(L_\kappa,A)$ is not admissible, and computability itself behaves in strange ways. For example, the $\aleph_{\w_1}$-degrees above $\emptyset'$ are well-ordered \cite{Friedman:AlephOmega1}. However~$L_\kappa$ itself is admissible and in some cases we can say something about $\kappa$-computable groups. For example, we can code $\emptyset'$ into bases of a group. In the case of cofinality~$\w$, the complexity introduced by closed and unbounded sets disappears, and we can say more. 

\begin{theorem}[$V=L$] \label{thm:singular_case} \
	\begin{enumerate}
		\item Every $\aleph_\w$-computable group has a $\emptyset'$-computable basis. 
		\item The index set of the $\aleph_\w$-computable free abelian groups is $\Pi^0_2$-complete. 
	\end{enumerate}
\end{theorem}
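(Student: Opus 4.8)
The plan is to treat the two parts separately, organised around Shelah's singular compactness theorem (\cite{Shelah:SingularCompactness}, \cite{EklofMekler}): since $\cf(\aleph_\w)=\w$, a torsion-free abelian group of size $\aleph_\w$ is free if and only if it is \emph{$\aleph_\w$-free}, meaning that each of its subgroups of size ${<}\aleph_\w$ is free (the forward direction being automatic, as subgroups of free groups are free). This equivalence is precisely what removes the stationary-set combinatorics that drives the regular case, and I would use it for both the complexity upper bound in (2) and the construction in (1). A second recurring point is that the jump $\emptyset'$ here is the jump \emph{over $L_{\aleph_\w}$}; it decides all $\Sigma_1(L_{\aleph_\w})$ facts, and is therefore far more powerful on structures of size $\aleph_n$ sitting inside $L_{\aleph_\w}$ than the $\Delta^1_1(L_{\aleph_1})$ bound appearing in \cref{cor:no_upper_bound_for_bases}; this is why coding a hard basis into an $\aleph_n$-sized piece is not in tension with the present theorem.

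For the upper bound in (2), I would write freeness as $\aleph_\w$-freeness and read off its quantifier complexity over $L_{\aleph_\w}$. For a group $H$ of size ${<}\aleph_\w$ (hence $H\in L_{\aleph_\w}$), ``$H$ is free'' is the $\Sigma_1$ statement ``there exists $B\in L_{\aleph_\w}$ which is a basis of $H$''. Thus ``$G$ is $\aleph_\w$-free'', namely $\forall H\,(H\le G \wedge |H|<\aleph_\w \rightarrow \exists B\,(B\text{ is a basis of }H))$, is $\Pi_2(L_{\aleph_\w})=\Pi^0_2$, uniformly in an index for $G$; by singular compactness this is equivalent to freeness, giving the $\Pi^0_2$ bound on the index set. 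For $\Pi^0_2$-hardness I would reduce a complete $\Pi^0_2$ predicate $P(e)=\forall n\,\exists m\,R(e,n,m)$ ($R$ being $\kappa$-computable) by uniformly building $G_e=\bigoplus_n C^e_n$, where each $C^e_n$ is a $\kappa$-computable torsion-free gadget that is free if and only if the $\Sigma^0_1$ event $\exists m\,R(e,n,m)$ occurs: as long as the event has not fired I keep imposing new divisibility relations (building a non-free rank-one-style group), and once it fires I stop, leaving a free group. Since a direct summand of a free abelian group is free, $G_e$ is free iff every $C^e_n$ is free iff $P(e)$ holds, so $e\mapsto G_e$ is the desired reduction (padding with a free summand to reach size $\aleph_\w$ if required).

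For (1), by the relativised recursion theorem for $\aleph_\w$-computability quoted in the introduction, it suffices to give a $\emptyset'$-computable instruction function driving a recursion of length $\aleph_\w$ that produces a continuous increasing filtration $\la S_\alpha : \alpha<\aleph_\w\ra$ of $G$ by direct summands of size ${<}\aleph_\w$ together with a coherent increasing sequence of bases $B_\alpha$ of the $S_\alpha$; the output basis is $B=\bigcup_{\alpha<\aleph_\w} B_\alpha$. I would maintain throughout the invariant that $G/S_\alpha$ is free. At a successor stage I use $\emptyset'$ to search $L_{\aleph_\w}$ for a summand $S_{\alpha+1}\supseteq S_\alpha\cup\{g_\alpha\}$ of size ${<}\aleph_\w$ with free quotient and a basis extending $B_\alpha$; such an extension exists because $G$ is $\aleph_\w$-free (every subset of size ${<}\aleph_\w$ lies in a summand of the same size, as witnessed by an abstract basis), and ``being such a summand with an explicit basis'' is $\Sigma_1(L_{\aleph_\w})$, hence located by $\emptyset'$. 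At a limit $\lambda$ I set $S_\lambda=\bigcup_{\alpha<\lambda}S_\alpha$ and $B_\lambda=\bigcup_{\alpha<\lambda}B_\alpha$; the crux is that $G/S_\lambda$ is again free, so that $B_\lambda$ remains a basis of a summand.

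The main obstacle is exactly this limit step for (1). For limits of cofinality ${<}\aleph_\w$ --- the analogue of the breakdown in \cref{exmp:Melnikov} --- I expect to have to strengthen the successor-stage bookkeeping (ensuring not merely that $G/S_{\alpha+1}$ is free but that the filtration is built so that each $G/S_\lambda$ inherits $\aleph_\w$-freeness), and I would need a lemma showing these invariants are preserved and can be detected by $\emptyset'$ on the size-${<}\aleph_\w$ quotients living in $L_{\aleph_\w}$; at the single top limit of cofinality $\w$, freeness of $G/S_\lambda$ is handed to me by singular compactness. Establishing that the intermediate limits never require more than $\emptyset'$, whereas the genuinely cofinal-in-$\kappa$ limits of the regular case force $\Sigma^1_1$ complexity, is the heart of the argument and the step I expect to demand the most care. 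For (2) the comparatively routine risks are checking that the gadgets $C^e_n$ stay torsion-free and that the reduction and the $\Pi^0_2$ bound are genuinely lightface and uniform in the index.
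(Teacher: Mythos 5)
There are two genuine gaps, one in each part. For part (2), your hardness gadget $C^e_n$ does not survive the move from $\w$ to $\aleph_\w$. The $\Sigma^0_1$ event $\exists m\,R(e,n,m)$ is a search over $L_{\aleph_\w}$, so it may first be witnessed at a transfinite stage; by then, if you have already imposed infinitely many divisibility relations on a single generator, $C^e_n$ contains a copy of a non-free rank-one group, and since subgroups of free groups are free, no later stage can repair this --- the group is irrevocably non-free even though the event fires. If instead you refresh generators at limit stages so that no element ever becomes infinitely divisible, then the limit group is free even when the event never fires. This tension (every proper initial segment must stay free and recoverable, while the full union must be non-free exactly when no witness appears) cannot be resolved by divisibility alone; the paper's proof of \cref{prop:identifying_free_singular_groups} instead invokes the construction of \cref{prop:weakly_compact:Pi02_complete}, which runs the twisting machinery of \cref{thm:identifying_freeness-successor} along the class $E$ inside the interval $(\alpha,\alpha^+)$, using that $E\cap(\alpha,\alpha^+)$ is stationary in $\alpha^+$ but does not reflect at the singular ordinals in between (\cref{thm:square_really}), and that $A\cap\alpha^+=A_{\alpha^+}$ by $\Sigma_1$-elementarity of $L_{\alpha^+}$ in $L_\kappa$. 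That fine-structural input is exactly what your gadget is missing.

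For part (1), your length-$\aleph_\w$ recursion marches through intermediate limits of uncountable cofinality, and there the invariant ``$G/S_\lambda$ is free'' is genuinely not preserved by taking unions --- \cref{exmp:Melnikov} shows a chain in which every $\Span(u_0,\dots,u_n)$ detaches in $G$ but the union does not --- and the ``strengthened bookkeeping'' you defer to is the entire content of the theorem; note also that ``$G/S_{\alpha+1}$ is free'' quantifies over a basis of a size-$\kappa$ group and so is $\Sigma^1_1$, not something $\emptyset'$ can certify. The paper sidesteps every intermediate limit by exploiting $\cf(\aleph_\w)=\w$ structurally: it introduces $\FDet(G)$, the $\kappa$-finite subgroups that detach in \emph{every} $\kappa$-finite extension (an $\emptyset'$-decidable condition via \cref{lem:detachment_in_subgroup_of_same_size}), proves this collection is cofinal among $\kappa$-finite subgroups (\cref{lem:bdd_detachment_set_cofinal}, by deriving a contradiction from a $\kappa$-finite free group whose filtration has empty detachment set), and then builds a single $\w$-chain $K_0\subseteq K_1\subseteq\cdots$ in $\FDet(G)$ interleaved with the groups $G\rest{\kappa_n}$, so that $G=\bigcup_n K_n$ with $K_n\div K_{n+1}$ and a basis is assembled from bases of the successive complements --- there are no limit stages to pass except the top one. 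Your $\Pi^0_2$ upper bound for (2) is essentially sound, though the paper obtains the underlying equivalence from \cref{prop:cofinality_omega_identifying_freeness} rather than from singular compactness alone.
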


A more general theorem holds for all cardinals of cofinality~$\w$. 

\subsection*{Questions} 
\label{sub:questions}

We are left with several questions.
\begin{enumerate}
	\item Can \cref{cor:no_upper_bound_for_bases} be strengthened? For example, is there a $\kappa$-computable group with no $\Delta^1_1(L_\kappa)$ basis? We remark that for regular uncountable cardinals there is no ``overspill'' phenomenon. 
	\item There are cases which were not covered. For example, we don't know 
	if \cref{cor:no_upper_bound_for_bases} holds for weakly compact cardinals. 
	\item What happens if $V\ne L$? Recall that for computability to take a familiar form we assume that there is a $\kappa$-computable bijection between~$\kappa$ and the universe $H(\kappa)$. For $\kappa = \aleph_1$ this implies that all reals are constructible, but it is consistent with some subsets of~$\w_1$ not being constructible. For $\kappa = \aleph_2$ this is a consequence of some forcing axioms (for example PFA), which imply the failure of CH. 
	\item What can be said about more complicated groups? Some of the results can be extended to homogeneously completely decomposable groups (see~\cite{DowneyMelnikov:EffectivelyCategoricalGroups}). It may be interesting to investigate the effective properties of uncountable such groups. 
\end{enumerate}





\section{Preliminaries} 
\label{sec:preliminaries}

We start with a few basic facts, most of which are well-known. We provide some details for completeness, and also because our presentation reflects a more dynamic approach than appears in literature. This will make them more convenient to use in the arguments in the rest of the paper. 

Recall that throughout this paper, we assume that $V=L$. A general reference for torsion-free abelian groups is \cite{Fuchs:Volume1}. The fine-structure tools we use appear in~\cite{Jensen}.

\subsection{Detachment, freeness, and clubs} 
\label{sub:detachment_freeness_and_clubs}

All groups we will discuss are abelian and torsion-free. A group~$G$ is \emph{free abelian} if it has a basis: a subset~$B$ which is linearly independent ($\sum m_i b_i = 0$ implies each $m_i = 0$, where $m_i\in \Int$ and $b_i\in B$) and spans~$G$ (every element of~$G$ is of the form $\sum m_i b_i$ for some $m_i\in \Int$ and $b_i\in B$). We will omit the adjective ``abelian'' and just call these groups free. For any infinite cardinal~$\kappa$, the free group of size~$\kappa$ will be denoted by~$\Z^\kappa$. 

\begin{fact} \label{fact:subgroups_are_free}
	Any subgroup of a free group is free. 	
\end{fact}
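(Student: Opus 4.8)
The plan is to prove the statement by a transfinite filtration argument that reduces freeness of a subgroup to the single fact that subgroups of~$\Int$ are cyclic. Let $G$ be free with a basis $\{e_\alpha : \alpha<\mu\}$, well-ordered in some order type~$\mu$ (under $V=L$ every set is well-orderable, so this costs nothing), and let $H\le G$ be an arbitrary subgroup. For $\alpha\le\mu$ put $G_\alpha=\Span\{e_\beta : \beta<\alpha\}$; then $G_0=0$, $G_\mu=G$, and the chain $(G_\alpha)_{\alpha\le\mu}$ is continuous, meaning $G_\lambda=\bigcup_{\alpha<\lambda}G_\alpha$ at every limit~$\lambda$. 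Intersecting with~$H$ yields a continuous increasing filtration $H_\alpha=H\cap G_\alpha$ of~$H$.

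The local step is that $G_{\alpha+1}=G_\alpha\oplus\Int e_\alpha$, so the quotient $G_{\alpha+1}/G_\alpha$ is infinite cyclic, and the induced homomorphism $\pi_\alpha\colon H_{\alpha+1}\to G_{\alpha+1}/G_\alpha\cong\Int$ has image a subgroup of~$\Int$, hence either trivial or infinite cyclic. Let $S=\{\alpha<\mu : \pi_\alpha(H_{\alpha+1})\ne 0\}$, and for each $\alpha\in S$ choose $b_\alpha\in H_{\alpha+1}$ whose image generates $\pi_\alpha(H_{\alpha+1})$. I will then show that $B=\{b_\alpha : \alpha\in S\}$ is a basis of~$H$. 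Spanning follows by transfinite induction on~$\alpha$ proving $H_\alpha\subseteq\Span(B)$: the limit case is immediate from continuity of the filtration, while at a successor $\alpha+1$ any $h\in H_{\alpha+1}$ satisfies $\pi_\alpha(h)=m\,\pi_\alpha(b_\alpha)$ for some $m\in\Int$ (with the convention $b_\alpha=0$ when $\alpha\notin S$), so $h-m b_\alpha$ lies in $G_\alpha\cap H=H_\alpha$ and is handled by the inductive hypothesis.

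For linear independence, suppose $\sum_{i=1}^n m_i b_{\alpha_i}=0$ with $\alpha_1<\dots<\alpha_n$ and $m_n\ne 0$. Applying $\pi_{\alpha_n}$ annihilates each $b_{\alpha_i}$ with $i<n$, since these lie in $G_{\alpha_i+1}\subseteq G_{\alpha_n}$, and leaves $m_n\,\pi_{\alpha_n}(b_{\alpha_n})=0$, contradicting that $\pi_{\alpha_n}(b_{\alpha_n})$ generates an infinite cyclic group. I expect the only delicate point to be the behaviour at limit stages, which hinges on continuity of $(G_\alpha)_{\alpha\le\mu}$: an element of $G_\lambda$ has finite support in the basis, so that support is bounded below~$\lambda$ and the element already lies in some $G_\alpha$ with $\alpha<\lambda$. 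This finiteness of support is precisely what prevents an infinite $\Int$-combination from accumulating across a limit, and it is the feature of the finite-support (abelian) setting on which the whole recursion rests.
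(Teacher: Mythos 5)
Your proof is correct: the transfinite filtration $H_\alpha = H\cap\Span\{e_\beta:\beta<\alpha\}$, the projections $\pi_\alpha$ onto the cyclic quotients $G_{\alpha+1}/G_\alpha\cong\Int$, and the induction establishing that the chosen generators $b_\alpha$ span and are independent are all carried out soundly, including the limit stages via finite support. The paper itself states this as a well-known fact without proof (deferring to the standard literature on torsion-free abelian groups), and your argument is precisely the classical proof found there, so there is nothing to reconcile.
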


The following is a key notion. 

\begin{definition} \label{def:detachment}
If $G$ is a group and $H\subseteq G$ is a subgroup, we say that $H$ \emph{detaches in} $G$ if $G = H \oplus K$ for some subgroup $K\subseteq G$. We write $H \div G$.	
\end{definition}

If~$G$ is free then $H\div G$ if and only if some basis of~$H$ can be extended to a basis of~$G$ if and only if every basis of~$H$ can be extended to a basis of~$G$.

\begin{fact} \label{fact:detachment_and_freeness_of_quotient}
	Suppose that~$G$ is free and that $H\subseteq G$ is a subgroup. Then~$H$ detaches in~$G$ if and only if the quotient group $G/H$ is free. 
\end{fact}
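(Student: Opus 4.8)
The plan is to prove the two implications separately, using \Cref{fact:subgroups_are_free} for the forward direction and an explicit lifting argument for the reverse.

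For the direction ``$H\div G$ implies $G/H$ free'', suppose $G = H\oplus K$ and let $\pi\colon G\to G/H$ be the quotient map. I would observe that the restriction $\pi\rest K$ is an isomorphism onto $G/H$: it is surjective because $G = H + K$ (every coset $g+H$ equals $k+H$ for the $K$-component $k$ of $g$), and injective because $\ker(\pi\rest K) = H\cap K = 0$. Since $K$ is a subgroup of the free group~$G$, \Cref{fact:subgroups_are_free} tells us that $K$ is free, and hence so is the isomorphic copy $G/H$.

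For the converse ``$G/H$ free implies $H\div G$'', which is the assertion that a surjection onto a free group splits, I would argue concretely. Fix a basis $\{b_i\}_{i\in I}$ of $G/H$, and using surjectivity of~$\pi$ choose for each~$i$ an element $g_i\in G$ with $\pi(g_i) = b_i$; set $K = \Span\{g_i : i\in I\}$. I then claim $G = H\oplus K$. To see $G = H + K$, take $g\in G$ and write $\pi(g) = \sum_i m_i b_i$ with $m_i\in\Int$ almost all zero; then $g - \sum_i m_i g_i$ lies in $\ker\pi = H$, so $g\in H + K$. To see $H\cap K = 0$, suppose $\sum_i m_i g_i\in H$; applying $\pi$ gives $\sum_i m_i b_i = 0$, and linear independence of the basis forces every $m_i = 0$, so $\sum_i m_i g_i = 0$.

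I do not expect a genuine obstacle here, as the computation is routine; the only conceptual point worth flagging is that the reverse direction uses freeness of $G/H$ essentially, in that freeness is exactly what provides a basis to lift and guarantees that the resulting lifts $g_i$ inherit the linear independence needed to witness $H\cap K = 0$. This is the manifestation of the fact that free abelian groups are projective.
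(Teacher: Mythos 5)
Your proof is correct. The paper states this as a well-known fact and supplies no proof of its own, so there is nothing to compare against; your argument is the standard one. The forward direction correctly identifies $G/H$ with the complement $K$ and invokes \cref{fact:subgroups_are_free}, and the reverse direction is the usual splitting argument via lifting a basis (projectivity of free abelian groups), with both the spanning claim and the triviality of $H\cap K$ verified correctly.
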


We also remark that if~$G$ is torsion-free abelian and $H$ is a subgroup of~$G$, then $H$ is a pure subgroup of~$G$ if and only if $G/H$ is torsion-free.

\smallskip

If $H$ is a subgroup of a group~$G$ then we write $[H,G]$ to denote the interval in the lattice of subgroups: it is the collection of all subgroups $K\subseteq G$ such that $H\subseteq K$.

\begin{proposition} \label{prop:detachment_is_forever}
	Suppose that $H$ detaches in $G$. Then~$H$ detaches in every subgroup $K\in [H,G]$. 
\end{proposition}

\begin{proof}
If $G$ is free then this follows from \cref{fact:subgroups_are_free} and 
\cref{fact:detachment_and_freeness_of_quotient}, but it also holds for 
arbitrary $G$.
Suppose that $G = H \oplus G'$ for some $G' \subseteq G$.  Let $K' = K \cap G'$.  Then $K = H \oplus K'$.  For if $g \in K$, then $g \in G$, so $g = h + k$ where $h \in H$ and $k\in G'$ and this decomposition is unique.  Since $h,g \in K$, we have $k \in K$, so $k \in K'$.
\end{proof}

\begin{proposition} \label{prop:Melnikov_done_short}
	There is a countable free group $G$ and a (by \cref{fact:subgroups_are_free}, free) pure subgroup~$H$ of~$G$ which does not detach in~$G$, but every finitely generated pure subgroup of~$H$ does detach in~$G$.
\end{proposition}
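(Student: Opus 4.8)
The plan is to reuse the group and subgroup already built in \Cref{exmp:Melnikov}. I would take $G = \Z^{\w+1}$ with basis $\{e_0,e_1,\dots,e_\w\}$, set $u_i = p_i e_i + e_\w$ and $U = \{u_0,u_1,\dots\}$, and define $H = \Span(U)$. The structural facts about $H$ come for free: \Cref{exmp:Melnikov} shows $U$ is $P$-independent, so $U$ is linearly independent and $H = \Span(U)$ is a pure subgroup of $G$. In particular $U$ is a basis of $H$, and $H$ is free (also by \Cref{fact:subgroups_are_free}).

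Next I would show $H$ does not detach in $G$, arguing by contradiction. If $H \div G$, then since $G$ is free, every basis of $H$ extends to a basis of $G$ (the remark following \Cref{def:detachment}); in particular the basis $U$ of $H$ would extend to a basis of $G$. But this is exactly what \Cref{exmp:Melnikov} rules out. Hence $H$ does not detach.

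The remaining point -- that every finitely generated pure subgroup $F$ of $H$ detaches in $G$ -- is where the care is needed, precisely because detachment cannot be obtained by composing through $H$, since $H$ itself fails to detach in $G$. Instead I would first record that purity is transitive: if $g \in G \cap \Rat F \subseteq G \cap \Rat H = H$, then $g \in H \cap \Rat F = F$, so $F$ is pure in $G$. Now $F$ is finitely generated and torsion-free, hence free with a finite basis $\{a_1,\dots,a_k\}$; since $\Span\{a_1,\dots,a_k\} = F$ is pure in $G$ and the $a_j$ are linearly independent, this basis is a finite $P$-independent subset of $G$.

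Finally, since $G \cong \Z^\w$ is countably generated free, I would enumerate its elements and iterate \Cref{prop:extension_of_finite_P-independece}, starting from $\{a_1,\dots,a_k\}$, to build an increasing chain of finite $P$-independent sets whose union $B$ contains $\{a_1,\dots,a_k\}$ and spans $G$. Every finite subset of $B$ lies in one set of the chain, hence is linearly independent, so $B$ is a basis of $G$ extending the chosen basis of $F$; therefore $F \div G$. The only mild subtlety to verify is that this union is genuinely a basis (linear independence passing through the chain, spanning from the enumeration), but no real obstacle arises: the essential work is already contained in \Cref{exmp:Melnikov} and \Cref{prop:extension_of_finite_P-independece}, and this proposition is in effect a repackaging of that example.
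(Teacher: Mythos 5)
Your proof is correct, but it takes a different route from the one the paper actually writes down. The paper's proof is the abstract one: take $G=\Int^\w$, fix any torsion-free non-free countable group $K$ and an epimorphism $G\to K$, and let $H$ be its kernel; then $H$ is pure because $G/H\cong K$ is torsion-free, $H$ does not detach because $G/H$ is not free (\cref{fact:detachment_and_freeness_of_quotient}), and finitely generated pure subgroups detach by Pontryagin's criterion. You instead instantiate the explicit construction of \cref{exmp:Melnikov} ($G=\Int^{\w+1}$, $H=\Span(U)$), which the paper itself flags as an alternative in the remark immediately following its proof, so your route is anticipated but not the one taken. The trade-off: the paper's argument is essentially three lines once \cref{fact:detachment_and_freeness_of_quotient} is available and makes clear the phenomenon is generic (any non-free torsion-free quotient produces such an $H$), whereas yours is self-contained at the level of generators and divisibility, replacing the quotient criterion by the hands-on verification already carried out in \cref{exmp:Melnikov} (note that the example establishes the stronger fact that $U\cup V$ fails to be $P$-independent for any finite $V$ spanning $e_\w$, which in particular rules out extending $U$ to a basis, which is what you need). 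Your treatment of the last step is actually more explicit than the paper's: you record the transitivity of purity and spell out the iteration of \cref{prop:extension_of_finite_P-independece}, both of which the paper leaves implicit. No gaps; all steps check out.
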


\begin{proof}
	Let $G = \Int^\omega$.  Take any torsion-free, non-free countable abelian group~$K$; fix an epimorphism from~$G$ onto~$K$; let $H$ be its kernel. Every finitely generated pure subgroup of~$H$ detaches in~$G$ because of Pontryagin's criterion \cref{prop:extension_of_finite_P-independece}.
\end{proof}

We also note that \cref{exmp:Melnikov} gives a direct construction of such groups~$H$ and~$G$. In the notation of that example, we may set $G = \Int^{\w+1}$ and let~$H$ be the span of~$U$.


\smallskip

A sequence $\seq{G_{\alpha}}_{\alpha<\gamma}$ of groups of some ordinal length~$\gamma$ is \emph{increasing} if $\alpha<\beta$ implies $G_{\alpha}\subseteq G_\beta$; it is \emph{continuous} if for all limit $\alpha<\gamma$, $G_\alpha = \bigcup_{\beta<\alpha} G_\beta$. A \emph{filtration} of a group~$G$ is a sequence $\bar G = \seq{G_{\alpha}}$ such that $\bar G$ is increasing and continuous, $G = \bigcup_{\alpha<\gamma} G_\alpha$, and $|G_\alpha| \le |\alpha|$ for all~$\alpha$. 

If~$\gamma$ is regular and~$G$ is a group of universe~$\gamma$ then all filtrations of~$G$ agree on a club; in fact, for club many~$\alpha$, $G_\alpha = G\cap \alpha$. We decide that the \emph{standard filtration} of a group~$G$ of universe a regular cardinal~$\gamma$ is defined by $G_\alpha = \Span(G\cap \alpha)$. 

\smallskip


\begin{definition} \label{def:detachment_set}
	Let $\bar G = \seq{G_{\alpha}}_{\alpha<\gamma}$ be increasing and continuous. The \emph{detachment set} of~$\bar G$ is
\[
	\Detach{\bar G} = \left\{ \alpha <\gamma \,:\,  \forall \beta \in (\alpha,\gamma) \,\,(G_\alpha \div G_\beta) \right\}.
\]
\end{definition}

If~$\gamma$ is regular and~$\bar G, \bar G'$ are two filtrations of a group of universe~$\gamma$, then $\Detach{\bar G}$ and $\Detach{\bar G'}$ agree on a club; this uses \cref{prop:detachment_is_forever}. In this case we will write $\Detach{G}$ for $\Detach{\bar G}$, where $\bar G$ is the standard filtration of~$G$. 

\smallskip

The following can essentially be found in \cite{Eklof1977c}; see \cite[IV.1.7]{EklofMekler}



\begin{proposition} \label{prop:club_and_freeness}
	Let $\gamma$ be a limit ordinal and let $\bar G = \seq{G_{\alpha}}_{\alpha<\gamma}$ be a filtration of a group~$G_\gamma$. Suppose that for all~$\alpha<\gamma$, $G_\alpha$ is free. 
	\begin{enumerate}
		\item If $\Detach{\bar G}$ contains a club of~$\gamma$ then~$G_\gamma$ is free.
		\item If~$\gamma$ is a regular cardinal and~$G_\gamma$ is free then $\Detach{\bar G}$ contains a club of~$\gamma$. 
	\end{enumerate}
\end{proposition}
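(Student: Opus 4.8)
The plan is to prove the two directions separately, exploiting the equivalence between detachment and freeness of the complement recorded in \cref{fact:subgroups_are_free} together with the persistence of detachment in \cref{prop:detachment_is_forever}.

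For part (1), I would fix a club $C\subseteq \Detach{\bar G}$ of $\gamma$ and enumerate it increasingly as $C=\seq{c_\xi}_{\xi<\delta}$ with $\delta=\otp(C)$; since $C$ is unbounded, $G_\gamma=\bigcup_{\xi<\delta}G_{c_\xi}$. The idea is to telescope complements along $C$ and assemble a basis. Start with a basis $X_{-1}$ of the free group $G_{c_0}$. For each $\xi$, since $c_\xi\in\Detach{\bar G}$ we have $G_{c_\xi}\div G_{c_{\xi+1}}$, so $G_{c_{\xi+1}}=G_{c_\xi}\oplus B_\xi$ for some $B_\xi$; as a subgroup of the free group $G_{c_{\xi+1}}$, $B_\xi$ is itself free by \cref{fact:subgroups_are_free}, so fix a basis $X_\xi$ of $B_\xi$. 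I claim $B=X_{-1}\cup\bigcup_{\xi<\delta}X_\xi$ is a basis of $G_\gamma$. This follows by transfinite induction on $\xi$ showing that $X_{-1}\cup\bigcup_{\eta<\xi}X_\eta$ is a basis of $G_{c_\xi}$: the successor step is the observation that a union of bases of the two summands of a direct sum is a basis of the sum, and the limit step uses the continuity of $\bar G$ (so that $G_{c_\xi}=\bigcup_{\eta<\xi}G_{c_\eta}$, since $C$ is closed) together with the fact that a nested union of bases is a basis --- spanning because every element lands in some $G_{c_\eta}$, and linear independence because any finite dependence is already witnessed inside one $G_{c_\eta}$.

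For part (2), fix a basis $B$ of the free group $G_\gamma$, and for $g\in G_\gamma$ let $S(g)\subseteq B$ denote its finite support in this basis. The strategy is to find a club of ordinals $\alpha$ for which $G_\alpha$ is spanned by a sub-basis, i.e.\ $G_\alpha=\Span(B\cap G_\alpha)$, equivalently $S(g)\subseteq G_\alpha$ for every $g\in G_\alpha$. For such $\alpha$ the set $B\cap G_\alpha$ extends to the basis $B$ of $G_\gamma$, so $G_\gamma=\Span(B\cap G_\alpha)\oplus\Span(B\setminus G_\alpha)$ and hence $G_\alpha\div G_\gamma$; since every $G_\beta$ with $\beta\in(\alpha,\gamma)$ lies in $[G_\alpha,G_\gamma]$, \cref{prop:detachment_is_forever} yields $G_\alpha\div G_\beta$, that is, $\alpha\in\Detach{\bar G}$. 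It then remains to check that $C=\{\alpha<\gamma:(\forall g\in G_\alpha)\,S(g)\subseteq G_\alpha\}$ is a club. Closure is immediate from the continuity of $\bar G$. For unboundedness I would use regularity of $\gamma$ and the bound $|G_\alpha|\le|\alpha|<\gamma$: given $\alpha_0$, repeatedly pass from $\alpha$ to an ordinal $\alpha'>\alpha$ large enough that $S(g)\subseteq G_{\alpha'}$ for every $g\in G_\alpha$ (possible since $G_\alpha$ has fewer than $\gamma$ elements and $\gamma$ is regular), and take the supremum over $\omega$ many steps; continuity shows this supremum lies in $C$.

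The direct-sum bookkeeping is routine; the two points needing genuine care are the limit stage of part (1) --- verifying that the telescoped union $X_{-1}\cup\bigcup_{\xi}X_\xi$ really is linearly independent and spanning across limit ordinals in the enumeration of $C$, which is precisely where closedness of the club and continuity of $\bar G$ enter --- and, in part (2), confirming that the support-closure set $C$ is both closed and unbounded, where the regularity of $\gamma$ is essential and is the hypothesis whose failure explains why no such club need exist for singular $\gamma$.
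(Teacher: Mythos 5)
Your proposal is correct and follows essentially the same route as the paper: part (1) telescopes complements along the club (the paper normalises $G_0$ to be trivial and $0\in C$ to write $G_\gamma=\bigoplus_{\alpha\in C}H_\alpha$ directly, while you carry a separate basis of $G_{c_0}$, but the argument is the same), and part (2) is exactly the paper's observation that the $\alpha$ with $G_\alpha=\Span(B\cap G_\alpha)$ form a club contained in $\Detach{\bar G}$. The details you supply --- the limit-stage verification via closedness of $C$ and continuity of the filtration, and the use of $|G_\alpha|\le|\alpha|<\gamma$ with regularity for unboundedness --- are the ones the paper leaves implicit, and they are right.
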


\begin{proof}
	For (2), let~$B$ be a basis for~$G_\gamma$. There are club many~$\alpha<\gamma$ for which $G_\alpha = \Span(B\cap \alpha)$; each such~$\alpha$ belongs to $\Detach{\bar G}$.

	\smallskip

	For~(1), suppose that~$C\subseteq \Detach{\bar G}$ is closed and unbounded. We may assume that $G_0$ is the trival group and that $0\in C$. For $\alpha\in C$ let $\alpha' = \min C\setminus (\alpha+1)$  be the next element of~$C$ after~$\alpha$. Then $G_\alpha \div G_{\alpha'}$; choose some $H_\alpha$ such that $G_{\alpha'} = G_\alpha \oplus H_\alpha$. Then $G_\gamma = \bigoplus_{\alpha\in C} H_\alpha$. Each~$H_\alpha$ is free (as $G_{\alpha'}$ is free). If~$B_{\alpha}$ is a basis of~$H_\alpha$, then $\bigcup_{\alpha\in C} B_\alpha$ is a basis of~$G_\gamma$. 
\end{proof}

If~$\gamma$ is a regular cardinal and $\seq{G_{\alpha}}$ is a filtration of a group~$G$ of universe~$\gamma$, then the relation $G_\alpha\div G_\beta$ is $\gamma$-c.e.; we need to search for a complement for~$G_\alpha$ in~$G_\beta$ (when~$G_\beta$ is free, equivalently we search for a basis of $G_\beta/G_\alpha$).  We will see that for some~$\gamma$ this relation will be $\Sigma^0_1(L_\gamma)$-complete, but for other~$\gamma$ the relation will be $\gamma$-computable. Note that the standard filtration of~$G$ is $G$-computable. 

\begin{remark} \label{rmk:detachment_set_is_definable}
	Let~$\gamma$ be a limit ordinal; let $\bar G = \seq{G_{\alpha}}_{\alpha<\gamma}$ be a filtration of a group~$G_\gamma$. Suppose that~$\Detach{\bar G}$ contains a club of~$\gamma$. Then 
	\[
	\Detach{\bar G} = \left\{ \alpha <\gamma \,:\,  G_\alpha \div G_\gamma \right\}.
\] One direction follows from \cref{prop:detachment_is_forever}; the other from the proof of \cref{prop:club_and_freeness}.
\end{remark}


\subsection{$\Sigma^1_1$ completeness of finding clubs, and a class arising from the proof of square principles} 
\label{sub:long_name}

We saw that identifying a free group reduces to finding club subsets of the definable set $\Detach{G}$. Thus, our stated result would imply that existence of a club subset is $\Sigma^1_1$-complete. This is indeed the case; this was proved for $\kappa = \w_1$ by Fokina et al.\ in \cite{FokinaEtAl}. The proof generalises. We will need this fact and will need to get more information from its proof. Most material in this subsection can be found in \cite{Jensen}.

\smallskip

Here is a key notion. 

\begin{definition} \label{def:the_singular_place}
For a singular ordinal~$\alpha$, we let $\sing{\alpha}$ be the least ordinal $\beta\ge \alpha$ such that there is a cofinal sequence in~$\alpha$ of order-type smaller than~$\alpha$ which is definable over~$J_\beta$. 	
\end{definition}
In other words this is the first place at which we recognise that~$\alpha$ is singular. The sets~$J_\beta$ are Jensen's modification of the~$L_\alpha$ hierarchy which is required to make fine structure theory work (the sets~$J_\beta$ are closed under the rudimentary functions). The details are unimportant, and for sufficiently nice ordinals~$\alpha$ we have $L_\alpha = J_\alpha$ anyway. We will use some basic facts which hold for both hierarchies. For example, the function $\alpha\mapsto J_\alpha$ is $\Sigma_1$-definable in every~$J_\beta$ for $\beta>\alpha$. Also, the subsets of~$J_\beta$ which are elements of~$J_{\beta+1}$ are precisely the ones definable over~$J_\beta$ (with parameters). 

We note:
\begin{itemize}
	\item The function $\alpha\mapsto \sing{\alpha}$ is $\Sigma_1$-definable, and so its restriction to ordinals below a cardinal~$\kappa$ is partial $\kappa$-computable. 
\end{itemize}
The domain of this function, the set of singular ordinals below a cardinal~$\kappa$, may fail to be $\kappa$-computable; it is merely $\kappa$-c.e. Note that this only happens when $\kappa$ is a limit cardinal. If~$\kappa$ is a successor cardinal then the restriction of the function $\alpha\mapsto \sing{\alpha}$ to ordinals below~$\kappa$ is $\kappa$-computable. 



\begin{definition} \label{def:class_E}
	The class~$E$ consists of all the singular ordinals~$\alpha$ such that for some $\beta\in (\alpha, \sing{\alpha})$:
	\begin{itemize}
		\item $J_{\beta} \models \textrm{ZF}^-$;
		\item $\alpha$ is the greatest cardinal of $J_{\beta}$;
		\item for some $p\in J_{\beta}$, $J_{\beta}$ is the least (fully) elementary substructure $M\prec J_{\beta}$ such that $p\in M$ and $M\cap \alpha$ is transitive. 
	\end{itemize}
\end{definition}

Suppose that $\alpha\in E$ and let~$\beta>\alpha$ witness this fact. Then~$J_\beta$ can be presented as the countable union $\bigcup M_i$, with $M_0 = \{p\}$ and each~$M_{i+1}$ being the $\Sigma_i(J_\beta)$-Skolem hull of $M_i\cup \sup({M_i\cap \alpha})$. The sequence $\seq{M_i}$ is definable over $J_{\beta+1}$. However, for all~$i$, the process generating~$M_i$ is definable over~$J_\beta$. Since $\beta< \sing{\alpha}$, $M_i\cap \alpha$ is bounded below~$\alpha$. This implies that:
\begin{itemize}
	\item Each $\alpha\in E$ has countable cofinality, and $\sing{\alpha} = \beta+1$. 
\end{itemize}

The definition of~$E$ was designed to ensure the following:

\begin{lemma} \label{lem:getting_into_E}
	Let~$\kappa$ be regular and uncountable; let $q\in L_{\kappa^+}$. Let~$M$ be the least elementary substructure of~$L_{\kappa^+}$ such that $q\in M$ and $M\cap \kappa$ is transitive. Let $\pi\colon M\to J_\beta$ be the Mostowski collapse; let $\alpha = \pi(\kappa) = M\cap \kappa$. Then $\alpha\in E$, witnessed by~$\beta$. 
\end{lemma}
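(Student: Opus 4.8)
The plan is to realise $M$ concretely as an $\w$-length iteration of Skolem hulls and then push each clause of \cref{def:class_E} through the collapse~$\pi$. First I would record the ambient facts about $L_{\kappa^+}$. Since $V=L$ and $\kappa^+$ is a regular cardinal, $L_{\kappa^+}=H(\kappa^+)$ satisfies $\textrm{ZF}^-$, $\kappa$ is its largest cardinal, and $\kappa$ is regular in it (a cofinal map $\delta\to\kappa$ with $\delta<\kappa$ would be an element of $H(\kappa^+)$, so $H(\kappa^+)$ computes the regularity of $\kappa$ correctly). Next I describe $M$: setting $N_0=\operatorname{Hull}^{L_{\kappa^+}}(\{q\})$, $\delta_n=\sup(N_n\cap\kappa)$ and $N_{n+1}=\operatorname{Hull}^{L_{\kappa^+}}(\{q\}\cup\delta_n)$, one checks that $M=\bigcup_n N_n$ is the least elementary substructure containing $q$ with $M\cap\kappa$ transitive; indeed any such substructure must contain each $N_n$, while $M\cap\kappa=\sup_n\delta_n=:\alpha$ is an initial segment of the ordinals. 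Since $\kappa$ is regular and uncountable, this countable supremum of ordinals below $\kappa$ is $<\kappa$, so $M$ is proper. By condensation the transitive collapse of $M$ is some $J_\beta$; because $M\cap\kappa=\alpha$ is transitive, $\pi$ fixes $\alpha$ pointwise, whence $\pi(\kappa)=\alpha$, and I set $p=\pi(q)$.

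The two bullets of \cref{def:class_E} other than singularity now transfer cleanly. As $M\prec L_{\kappa^+}$, elementarity gives $J_\beta\models\textrm{ZF}^-$ and, since ``$x$ is the largest cardinal'' is first order, $\alpha$ is the largest cardinal of $J_\beta$; in particular $\alpha<\beta$. For the minimality clause, take any $N\prec J_\beta$ with $p\in N$ and $N\cap\alpha$ transitive. Then $\pi^{-1}[N]\prec M\prec L_{\kappa^+}$ contains $q$, and since $\pi\restrict\alpha=\id$ one computes $\pi^{-1}[N]\cap\kappa=N\cap\alpha$, which is transitive; by the minimality of $M$ we get $\pi^{-1}[N]=M$, i.e.\ $N=J_\beta$. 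Thus $J_\beta$ is the least (indeed the only) such substructure, witnessed by~$p$.

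For singularity I would compute $\cf^V(\alpha)$ from the construction. If the $\delta_n$ are strictly increasing then $\alpha=\sup_n\delta_n$ has cofinality $\w$. If they stabilise at a stage $n\ge 1$ (so $\delta_{n-1}<\delta_n=\delta_{n+1}=\cdots$), then $\alpha=\delta_n=\sup(N_n\cap\kappa)$ with $\cf^V(\alpha)\le|N_n|=|\delta_{n-1}|\le\delta_{n-1}<\alpha$; the stage-$0$ case is similar using $|N_0|=\aleph_0$. Either way $\alpha$ is singular, noting that $\alpha>\w$ since every natural number and $\w$ itself are definable and hence lie in $M$.

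The last clause, $\beta<\sing{\alpha}$, is the crux, and I expect it to be the main obstacle. I would show that no cofinal sequence in $\alpha$ of order-type $<\alpha$ is definable over $J_\gamma$ for any $\gamma\le\beta$. Transferring the regularity of $\kappa$ through $\pi$ gives $J_\beta\models\text{``}\alpha$ is regular'', so no such sequence is an \emph{element} of $J_\beta$; for $\gamma<\beta$ this already suffices, as a sequence definable over $J_\gamma$ lies in $J_{\gamma+1}\subseteq J_\beta$. The delicate case is $\gamma=\beta$, where the sequence need only be definable over, not an element of, $J_\beta$. Here I use the minimality clause just proved, which yields $J_\beta=\operatorname{Hull}^{J_\beta}(\{p\}\cup\alpha)$, so the parameters of any definition may be taken of the form $p,\xi_1,\dots,\xi_k$ with $\xi_i<\alpha$. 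A cofinal map $c\colon\delta\to\alpha$ with $\delta<\alpha$ defined over $J_\beta$ by a formula $\vphi(\cdot,\cdot,p,\bar\xi)$ then has the property that the same formula, under $\pi^{-1}$ (which sends $p\mapsto q$, fixes $\bar\xi$ and $\delta$, and sends $\alpha\mapsto\kappa$), defines over $M$ and hence over $L_{\kappa^+}$ a cofinal map $\delta\to\kappa$ with $\delta<\kappa$. Its range is a definable cofinal subset of $\kappa$ of size $<\kappa$, which is a set in $V=L$, contradicting the regularity of $\kappa$. Hence $\beta<\sing{\alpha}$, all clauses of \cref{def:class_E} hold, and $\alpha\in E$ as witnessed by~$\beta$.
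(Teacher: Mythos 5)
Your proof is correct and takes essentially the same route as the paper: the paper's own argument for this lemma is just the one-line observation that a $J_\beta$-definable cofinal map $\gamma\to\alpha$ (with $\gamma<\alpha$) would transfer along $\pi^{-1}$ to a definable cofinal map $\gamma\to\kappa$ over $L_{\kappa^+}$, which is exactly your crux. Your remaining verifications --- the $\w$-iteration of Skolem hulls realising $M$, the transfer of the $\textrm{ZF}^-$, largest-cardinal and minimality clauses through $\pi$, the singularity of $\alpha$, and the reduction of parameters via $J_\beta=\operatorname{Hull}^{J_\beta}(\{p\}\cup\alpha)$ --- are routine elaborations consistent with the discussion surrounding \cref{def:class_E} in the paper.
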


The main idea, for showing that $\beta<\sing{\alpha}$, is that if $\gamma<\alpha$ and $f\colon \gamma\to \alpha$ is $J_\beta$-definable and cofinal, then the same definition over~$M$ (equivalently $L_{\kappa^+}$) gives a cofinal $\hat f\colon \gamma\to \kappa$, which is impossible. It follows that if~$\kappa$ is regular, then $E\cap \kappa$ is stationary in $\kappa$: for any club~$C$ of~$\kappa$, consider the least elementary $M\prec L_{\kappa^+}$ such that $C\in M$ and $M\cap \kappa$ is transitive. A similar argument gives the $\Sigma^1_1$-completeness of containing a club. We will make use of the following tool. 

\begin{definition} \label{def:the_Sigma_11_approximating_set}
	Let~$\kappa$ be regular and uncountable, let $B\in 2^\kappa$, and let $\forall X\,\vphi$ be a $\Pi^1_1$ formula, where $\vphi$ is first-order with parameter $r\in L_\kappa$ .We let $F(B,\vphi)$ be the set of singular ordinals $\alpha<\kappa$ such that $r\in J_\alpha$, $B\rest \alpha\in J_{\sing{\alpha}}$ and for all $X\in J_{\sing{\alpha}}$, $(J_\alpha,B\rest \alpha,X)\models \vphi$. 
\end{definition}

That is, $\alpha\in F(B,\vphi)$ if we believe that the $\Pi^1_1$ property under discussion holds of~$(J_\alpha,B\rest \alpha)$, where we limit the second-order quantifiers to subsets of~$\alpha$ which are only constructed at stages at which we still think that~$\alpha$ is regular.

\begin{lemma} \label{lem:reflecting_Pi11_and_Sigma11_sets_on_F}
	Let~$\kappa$ be a regular cardinal, let $B\in 2^\kappa$, and let $\forall X\,\vphi$ be a $\Pi^1_1$ formula. 
		\begin{enumerate}
			\item If $(L_\kappa,B)\models \lnot\forall X\,\vphi$ then $F(B,\vphi)$ is nonstationary in~$\kappa$.
			\item If $(L_\kappa,B)\models \forall X\,\vphi$ then $E\cap F(B,\vphi)$ is stationary in~$\kappa$. 
		\end{enumerate}
\end{lemma}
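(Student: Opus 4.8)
The plan is to prove both parts by reflecting the \emph{local} $\Pi^1_1$ assertion through elementary substructures of $L_{\kappa^+}$ and then collapsing via condensation; the only difference is that for~(1) I would run a continuous chain of such substructures so as to manufacture a whole club disjoint from $F(B,\varphi)$, whereas for~(2) I would use the single least substructure supplied by \cref{lem:getting_into_E} in order to meet an arbitrary prescribed club inside~$E$.

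For part~(1), I would fix $X_0\subseteq\kappa$ witnessing $(L_\kappa,B)\models\lnot\forall X\,\varphi$, so that $(L_\kappa,B,X_0)\models\lnot\varphi$; note that this is a first-order assertion about $L_{\kappa^+}$ with parameters $L_\kappa,B,X_0,\kappa$ (and $r$). I would then build an increasing, continuous chain $\seq{M_\xi}_{\xi<\kappa}$ of elementary substructures of $(L_{\kappa^+};\in,B,X_0,r,\kappa)$, each of size $<\kappa$ and with $\xi\subseteq M_\xi$. On the club $C_0=\{\alpha : M_\alpha\cap\kappa=\alpha,\ r\in J_\alpha,\ L_\alpha=J_\alpha\}$, condensation collapses $M_\alpha$ to some $L_{\gamma_\alpha}=J_{\gamma_\alpha}$ with $\gamma_\alpha<\kappa$, sending $\kappa\mapsto\alpha$, $B\mapsto B\restriction\alpha$, $X_0\mapsto X_0\restriction\alpha$, and fixing~$r$; transferring $\lnot\varphi$ downward yields $(J_\alpha,B\restriction\alpha,X_0\restriction\alpha)\models\lnot\varphi$ together with $X_0\restriction\alpha\in J_{\gamma_\alpha}$. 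The crucial point will be that $\alpha$ is regular in $L_{\gamma_\alpha}$ (by elementarity, since $\kappa$ is regular in $L_{\kappa^+}$), so no cofinal map $\gamma\to\alpha$ with $\gamma<\alpha$ is definable over any $J_\delta$ with $\delta<\gamma_\alpha$; hence $\sing\alpha\ge\gamma_\alpha$ for every singular $\alpha\in C_0$, and therefore $X_0\restriction\alpha\in J_{\gamma_\alpha}\subseteq J_{\sing\alpha}$. Thus $X_0\restriction\alpha$ is a member of $J_{\sing\alpha}$ witnessing $\lnot\varphi$, so no singular $\alpha\in C_0$ lies in $F(B,\varphi)$; as $F(B,\varphi)$ consists only of singular ordinals, $C_0\cap F(B,\varphi)=\emptyset$ and $F(B,\varphi)$ is nonstationary.

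For part~(2), given an arbitrary club $C$ I would first shrink it so that $r\in J_\alpha$ and $L_\alpha=J_\alpha$ for all $\alpha\in C$, and then apply \cref{lem:getting_into_E} with a parameter $q\in L_{\kappa^+}$ coding $B$, $r$ and $C$: let $M$ be the least elementary substructure of $L_{\kappa^+}$ with $q\in M$ and $M\cap\kappa$ transitive, let $\pi\colon M\to J_\beta$ be the collapse, and $\alpha=\pi(\kappa)=M\cap\kappa$. Then $\alpha\in E$ as witnessed by~$\beta$, so $\alpha$ is singular and $\sing\alpha=\beta+1$; since $C\in M$ is a club and $\alpha=M\cap\kappa$ is transitive, $\alpha\in C$, giving $r\in J_\alpha$. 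As $\pi(\kappa)=\alpha$ we have $\pi(L_\kappa)=L_\alpha=J_\alpha$ and $\pi(B)=B\restriction\alpha\in J_\beta\subseteq J_{\sing\alpha}$. The statement ``$(L_\kappa,B)\models\forall X\,\varphi$'', being first-order over $L_{\kappa^+}$ because every subset of $\kappa$ lies in $L_{\kappa^+}$, transfers through $\pi$ to ``$(J_\alpha,B\restriction\alpha)\models\forall X\,\varphi$'' as computed in $J_\beta$, i.e. $(J_\alpha,B\restriction\alpha,X)\models\varphi$ for every $X\in\mathcal P(\alpha)\cap J_\beta$. It then remains to pass from $J_\beta$ to $J_{\sing\alpha}=J_{\beta+1}$: a subset of $\alpha$ lying in $J_{\beta+1}$ is definable over $J_\beta$, and since $J_\beta\models\mathrm{ZF}^-$, Separation inside $J_\beta$ places every such set already in $J_\beta$, so $\mathcal P(\alpha)\cap J_{\sing\alpha}=\mathcal P(\alpha)\cap J_\beta$. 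Hence $(J_\alpha,B\restriction\alpha,X)\models\varphi$ for all $X\in J_{\sing\alpha}$, so $\alpha\in F(B,\varphi)$, and therefore $\alpha\in E\cap F(B,\varphi)\cap C$; as $C$ was arbitrary, $E\cap F(B,\varphi)$ is stationary.

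The main obstacle is the placement of the reflected witness in part~(1): a priori the global counterexample $X_0\restriction\alpha$ could be constructed only long after the stage $\sing\alpha$ at which $\alpha$ is first recognised as singular, in which case it would not be an admissible witness for non-membership in $F(B,\varphi)$. The inequality $\sing\alpha\ge\gamma_\alpha$, extracted from the regularity of $\alpha$ in the condensate $L_{\gamma_\alpha}$, is exactly what rules this out and is the heart of the argument; the parallel piece of bookkeeping in part~(2) is the $\mathrm{ZF}^-$-Separation identity $\mathcal P(\alpha)\cap J_{\beta+1}=\mathcal P(\alpha)\cap J_\beta$, which is what lets the transfer of $\forall X\,\varphi$ over $J_\beta$ upgrade to a quantifier over all of $J_{\sing\alpha}$.
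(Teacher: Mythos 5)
Your proof is correct and follows essentially the same route as the paper's: for (1), a continuous chain of elementary submodels of $L_{\kappa^+}$ containing the counterexample $X_0$, collapsed by condensation, with the key point that $\sing{\alpha}$ is at least the height of the collapse so that $X_0\restriction\alpha$ is an admissible witness; and for (2), the least hull supplied by \cref{lem:getting_into_E} applied to an arbitrary club. The only (harmless) variation is in the final step of (2): where the paper un-collapses a $J_\beta$-definition of $X\in J_{\beta+1}\cap 2^\alpha$ to a set $\hat X\subseteq\kappa$ and applies the hypothesis to $\hat X$, you instead use Separation in $J_\beta\models\mathrm{ZF}^-$ to get $\mathcal{P}(\alpha)\cap J_{\beta+1}=\mathcal{P}(\alpha)\cap J_\beta$ --- which is precisely the observation recorded in the paper's footnote that $\sing{\alpha}$ could be replaced by $\sing{\alpha}-1$ when restricting to $\alpha\in E$.
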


\begin{proof}
	Let~$r$ be the parameter for~$\vphi$. 

	\smallskip

	For~(1), we build a continuous and increasing sequence $\seq{M_i}_{i<\kappa}$ of elementary submodels of~$L_{\kappa^+}$ such that $r,B\in M_0$ and $\alpha_i = \kappa\cap M_i$ is an element of~$\kappa$; the set $\{\alpha_i\,:\, i<\kappa\}$ is closed and unbounded in~$\kappa$ (we let $\alpha_i\in M_{i+1}$). Let $\pi_i\colon M_i \to J_{\beta_i}$ be the Mostowski collapse. The argument above shows that $\beta_i<\sing{\alpha_i}$. There is some $X\in M_0\cap 2^\kappa$ such that $(L_\kappa,B,X)\models \lnot \vphi$; then $X\rest{\alpha_i} = \pi_i(X)\in J_{\beta_i}$ (and $B\rest{\alpha_i}\in J_{\beta_i}$) and $(J_{\alpha_i},B\rest{\alpha_i},X\rest{\alpha_i})\models \vphi$ (as $J_{\beta_i}$ thinks it does, and this is absolute). Hence the club $\left\{ \alpha_i \,:\, i<\kappa   \right\}$ is disjoint from $F(B,\vphi)$. 

	\smallskip

	For~(2), let~$C$ be a club of~$\kappa$. Let $M\prec L_{\kappa^+}$ be least such that $r,C,B\in M$ and $M\cap \kappa \in \kappa$. Let $\pi\colon M\to J_\beta$ be the Mostowski collapse and let $\alpha = \pi(\kappa) = M\cap \kappa$. Then $\alpha\in E\cap C$, and $\sing{\alpha} = \beta+1$. Suppose that $X\in J_{\beta+1}\cap 2^\alpha$. It is definable over~$J_\beta$, say with parameter~$q$. Let~$\hat X$ be the interpretation of the same definition over~$M$ (equivalently $L_{\kappa^+}$), with parameter $\pi^{-1}(q)$. Then $(L_\kappa,B,\hat X)\models \vphi$. It follows that $(J_\alpha,B\rest{\alpha},X)\models \vphi$, so $\alpha\in F(B,\vphi)$.\footnote{In the definition of~$F(B,\vphi)$ we could replace $\sing{\alpha}$ by $\sing{\alpha}-1$, assuming that we are restricting ourselves to $\alpha\in E$.} 
\end{proof}

\begin{corollary} \label{cor:Sigma11_completeness_of_containing_a_club}
	Let~$\kappa$ be a successor cardinal. The nonstationary ideal on~$\kappa$ (equivalently the club filter on~$\kappa$) is $\Sigma^1_1$-complete. In fact, the restriction of the nonstationary ideal to~$E\cap \kappa$ is $\Sigma^1_1$-complete. That is, for any $\Sigma^1_1(L_\kappa)$ set $A\subseteq 2^\kappa$ there is a $\kappa$-computable function $f\colon 2^\kappa\to 2^\kappa$ such that for all~$Y\in 2^\kappa$, $f(Y)\subseteq E$, and $Y\in A$ if and only if $f(Y)$ is nonstationary. 
\end{corollary}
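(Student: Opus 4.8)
The plan is to read the corollary off \cref{lem:reflecting_Pi11_and_Sigma11_sets_on_F} with almost no extra work, treating the two clauses of that lemma as the two directions of a many-one reduction. First I would note that the nonstationary ideal is itself $\Sigma^1_1$: a set $S$ is nonstationary exactly when there is a club $C\subseteq\kappa$ disjoint from $S$, and ``being a club'' is first-order over $(H(\kappa),\in)$, so the matrix is first-order and the single second-order quantifier is existential. Hence it suffices to exhibit, for each $\Sigma^1_1(L_\kappa)$ set $A$, a $\kappa$-computable $f$ with $f(Y)\subseteq E$ and $Y\in A$ if and only if $f(Y)$ is nonstationary; $\Sigma^1_1$-completeness of both the full ideal and its restriction to $E$ then follows immediately.

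Next I would put $A$ into the normal form demanded by the lemma. Being $\Sigma^1_1(L_\kappa)$, $A$ is defined by a formula $\exists X\,\theta$ with $\theta$ first-order and a parameter $r\in L_\kappa$; writing $\varphi := \neg\theta$ we obtain that $\forall X\,\varphi$ is $\Pi^1_1$ and that $Y\in A$ if and only if $(L_\kappa,Y)\models \neg\forall X\,\varphi$. I would then simply define the reduction by $f(Y) = E\cap F(Y,\varphi)$, taking $B=Y$ as the set parameter of \cref{def:the_Sigma_11_approximating_set}. By construction $f(Y)\subseteq E$.

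The equivalence is then exactly the content of the lemma. If $Y\in A$ then $(L_\kappa,Y)\models\neg\forall X\,\varphi$, so by clause~(1) the set $F(Y,\varphi)$---and a fortiori its subset $f(Y)=E\cap F(Y,\varphi)$---is nonstationary. Conversely, if $Y\notin A$ then $(L_\kappa,Y)\models\forall X\,\varphi$, so by clause~(2) the set $E\cap F(Y,\varphi)=f(Y)$ is stationary, in particular not nonstationary. Thus $Y\in A$ if and only if $f(Y)$ is nonstationary, as required.

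The one point needing genuine checking---and which I expect to be the main (if modest) obstacle---is that $f$ is $\kappa$-computable uniformly in $Y$, and this is where the hypothesis that $\kappa$ is a successor is used. Here I would invoke the remark preceding \cref{def:class_E}: for successor $\kappa$ the map $\alpha\mapsto\sing{\alpha}$ restricted to $\alpha<\kappa$ is total and $\kappa$-computable, and in particular $\sing{\alpha}<\kappa$, so $J_{\sing{\alpha}}$ is an element of $L_\kappa$ obtained uniformly and computably from $\alpha$. Consequently the clause ``for all $X\in J_{\sing{\alpha}}$, $(J_\alpha,Y\rest\alpha,X)\models\varphi$'' in the definition of $F(Y,\varphi)$ is a quantifier over a single element of $L_\kappa$ rather than a genuine second-order quantifier, so membership of $\alpha$ in $F(Y,\varphi)$, together with its negation, is expressible by $\Sigma_1$ formulas over $L_\kappa$ and hence is $\kappa$-computable uniformly in $\alpha$ and $Y$. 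Membership in $E$ is handled similarly: using $\sing{\alpha}=\beta+1$ for $\alpha\in E$, one computes the candidate $\beta=\sing{\alpha}-1$ and checks the three first-order conditions of \cref{def:class_E} over $J_\beta$. Intersecting, $f(Y)=E\cap F(Y,\varphi)$ is $\kappa$-computable uniformly in $Y$, completing the reduction.
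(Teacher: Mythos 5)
Your proposal is correct and is essentially the paper's own proof: the paper likewise sets $f(Y)=E\cap F(Y,\lnot\vphi)$ (your $\varphi$ being its $\lnot\vphi$), reads the equivalence directly off the two clauses of \cref{lem:reflecting_Pi11_and_Sigma11_sets_on_F}, and justifies $\kappa$-computability of $f$ via the $\kappa$-computability of $\alpha\mapsto\sing{\alpha}$ below a successor cardinal. Your spelled-out verification of the computability of $E$ and $F(Y,\vphi)$ just makes explicit what the paper asserts in one line.
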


\begin{proof}
	Let $\exists X\,\vphi$ be the formula defining~$A$; we let $f(Y) = E\cap F(Y,\lnot\vphi)$. Recall that the set of singular ordinals below~$\kappa$ is $\kappa$-computable; this implies that~$E\cap \kappa$ is $\kappa$-computable and that $F(Y,\lnot\vphi)$ is~$Y$-computable, uniformly in~$Y$. 
\end{proof}

A key fact that we will use for $\kappa\ge \aleph_2$ is the following, which is \cite[Thm.5.1]{Jensen}.

\begin{theorem}[Jensen] \label{thm:square_really}
	The class~$E$ does not reflect at any singular ordinal. That is, if $\alpha$ is singular then $E\cap \alpha$ is nonstationary in~$\alpha$. 
\end{theorem}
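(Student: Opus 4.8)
The plan is to prove the contrapositive form directly: for every singular ordinal $\alpha$ I would exhibit a club $C \subseteq \alpha$ with $C \cap E = \emptyset$, which is exactly the assertion that $E \cap \alpha$ is nonstationary. The guiding idea is that membership in $E$ is a \emph{minimality} condition: by \cref{def:class_E}, $\gamma \in E$ requires some $J_\beta$ with $\beta \in (\gamma,\sing{\gamma})$ to be its \emph{own least} elementary hull of a single parameter together with the demand that the hull meet $\gamma$ transitively. I would manufacture a club whose points arise, by construction, as \emph{proper} condensation points, so that the corresponding witnessing structures can never be minimal, and hence none of these points can lie in $E$.

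To build the club, fix a singular $\alpha$ and set $\tau = \sing{\alpha}$. By \cref{def:the_singular_place} there is a cofinal $f \colon \delta \to \alpha$ with $\delta < \alpha$ definable over $J_\tau$; passing to its increasing, continuous reenumeration (still definable over $J_\tau$), I may assume $\delta = \cf(\alpha)$ and $f$ strictly increasing and continuous. Fix the least $n$ and a parameter $p \in J_\tau$ for which $f$ is $\Sigma_n$-definable, and let $h$ be the canonical $\Sigma_n$-Skolem function of $J_\tau$ supplied by the fine-structure theory of the $J$-hierarchy. For $\xi < \delta$ let $X_\xi = h''(\xi \cup \{p\})$ be the $\Sigma_n$-Skolem hull; the chain $\seq{X_\xi}$ is increasing and continuous. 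Because $f$ is definable from $p$ we have $f \in X_\xi$, so whenever $\delta \subseteq X_\xi$ the hull becomes cofinal in $\alpha$; this is the reason the closure points must be taken \emph{inside} $\delta$ and then pushed forward by $f$. I would let $C$ be the set of suprema $\gamma_\xi = \sup(X_\xi \cap \alpha)$ as $\xi$ ranges over the $\xi < \delta$ with $X_\xi \cap \delta = \xi$. For such $\xi$ the set $X_\xi \cap \delta = \xi$ is bounded in $\delta$, so $f''(X_\xi \cap \delta)$ is bounded in $\alpha$ and $\gamma_\xi < \alpha$; a routine closure argument then shows that (the closure of) $C$ is a club in $\alpha$ of order type at most $\delta < \alpha$.

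The heart of the argument is to show $\gamma \notin E$ for $\gamma \in C$. Applying the condensation lemma for the $J$-hierarchy, the transitive collapse $\pi_\xi \colon X_\xi \to J_{\bar\tau_\xi}$ sends $\alpha$ to $\gamma = \gamma_\xi$ and $f$ to a cofinal map $\bar\delta \to \gamma$ with $\bar\delta < \gamma$ definable over $J_{\bar\tau_\xi}$; hence $\sing{\gamma} \le \bar\tau_\xi$. The key point is that each $\gamma \in C$ is a \emph{proper} condensation point along the chain: the earlier hulls $X_{\xi'}$ (for $\xi' < \xi$ cofinal in $\xi$) collapse to proper elementary substructures of $J_{\bar\tau_\xi}$ whose intersection with $\gamma$ is transitive. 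Consequently no candidate witness $J_\beta$ with $\beta \in (\gamma,\sing{\gamma})$ can satisfy the minimality clause of \cref{def:class_E}: the hull construction relativised to $J_\beta$ yields a \emph{proper} elementary $M \prec J_\beta$ containing the required parameter with $M \cap \gamma$ transitive, so $J_\beta$ is never the least such $M$. Therefore $\gamma \notin E$, and $C$ is a club disjoint from $E$.

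The step I expect to be the main obstacle is precisely this condensation-and-minimality argument. Several delicate points must be aligned: one must track the \emph{least} witnessing data $(n,p)$ through the Mostowski collapse so that the collapsed structures genuinely compute $\sing{\gamma}$ rather than merely bound it; one must verify that the proper substructures produced along the chain have transitive intersection with $\gamma$ (the feature that clause (iii) of \cref{def:class_E} forbids for members of $E$); and one must manage the $\Sigma_n$-levels so that the fine-structural condensation lemma applies uniformly. The bounded-hull phenomenon noted above---that including the parameter $p$ defining $f$ tends to make the hulls cofinal in $\alpha$, forcing the closure points down into $\delta$---is the combinatorial crux that keeps the order type of $C$ below $\alpha$ and makes the construction go through. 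Throughout I would lean on the facts already recorded for members of $E$ (countable cofinality and $\sing{\gamma}$ a successor) and on the condensation properties of the $J$-hierarchy from the cited fine-structure theory.
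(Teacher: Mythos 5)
The paper does not prove this statement at all: it is quoted as Jensen's Theorem~5.1, and the authors explicitly remark that ``the proof of this fact is complicated'' and is part of the proof of the square principle in $L$. So your proposal is not a reconstruction of an argument in the paper but an attempt to compress Jensen's fine-structural proof into a page, and as it stands it has genuine gaps rather than being a shortcut around them.

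The central gap is the step ``$\gamma\notin E$ for $\gamma\in C$''. To show $\gamma\notin E$ you must rule out \emph{every} $\beta\in(\gamma,\sing{\gamma})$ with $J_\beta\models\mathrm{ZF}^-$ and greatest cardinal $\gamma$, and \emph{every} parameter $p\in J_\beta$. Your construction produces, at best, proper $\Sigma_n$-elementary substructures of the single collapsed level $J_{\bar\tau_\xi}$ containing one particular collapsed parameter; but you only obtain $\sing{\gamma}\le\bar\tau_\xi$, so the actual candidate witness level $J_{\sing{\gamma}-1}$ may sit strictly below $\bar\tau_\xi$ and is untouched, and \cref{def:class_E} demands \emph{fully} elementary substructures, which $\Sigma_n$-hulls do not provide. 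Worse, the images $\pi_\xi''X_{\xi'}$ need not have transitive intersection with $\gamma$: $X_{\xi'}\cap\alpha$ is not in general an initial segment of $X_\xi\cap\alpha$ (Skolem terms applied to ordinals in $\xi\setminus\xi'$ can output small ordinals), so these structures do not even qualify for the minimality comparison in the third clause of \cref{def:class_E}. If you repair this by closing each hull under $\sup(M\cap\gamma)$ and iterating, you are asking whether the resulting $\omega$-chain of hulls stabilizes at a proper substructure or exhausts $\gamma$ --- and that dichotomy is precisely what decides whether $\gamma\in E$, i.e.\ it is the content of the theorem, not a lemma you may assume. Finally, for $n>1$ the ``canonical $\Sigma_n$-Skolem function'' and $\Sigma_n$-condensation are not routine facts about the $J$-hierarchy: they require projecta, standard parameters and master codes, which is exactly where the real work of Jensen's proof lives. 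If you need this theorem, cite it; a self-contained proof would essentially reproduce the $\square$ construction.
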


The proof of this fact is complicated. It is part of the proof of the square principle in~$L$.

\subsection{Twisting a group} 
\label{sub:twisting_a_group}

The plan for proving \cref{thm:identifying_free_groups} for the case of successor cardinals is to take a set~$Y\subseteq \kappa$ and produce a $Y$-computable group~$G$ such that $\Detach{G} = \kappa\setminus f(Y)$, where~$f$ is given by \cref{cor:Sigma11_completeness_of_containing_a_club}. A main tool would be to take a group~$G_\alpha$ which we have already constructed, and ensure that it does not detach in~$G$ by ensuring that it does not detach in~$G_{\alpha+1}$. On the other hand we need to ensure that for all~$\beta<\alpha$, if we already declared that we want $G_\beta$ to detach in~$G$, then $G_\beta$ detaches in~$G_{\alpha+1}$. We need to ``twist'' $G_\alpha$ without further twisting any $G_\beta$ for $\beta<\alpha$. 

The idea is to use \cref{prop:Melnikov_done_short}. We generalise it to possibly uncountable groups by picking out countable pieces. 

\begin{proposition} \label{prop:twisting_lemma}
	Suppose that $\seq{H_n}$ is an increasing sequence of free groups such that for all~$n$, $H_{n}\div H_{n+1}$; so $H_\w = \bigcup_n H_n$ is free as well. There is a free group~$G$ extending~$H_\w$ (with $|G| = |H_\w|$) such that $H_\w \ndiv G$ but for all~$n$, $H_n\div G$. The group~$G$ can be obtained effectively from the sequence $\seq{H_n}$. 
\end{proposition}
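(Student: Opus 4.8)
The plan is to generalise the Fuchs/Melnikov construction of \cref{exmp:Melnikov} (and \cref{prop:Melnikov_done_short}) from a single prime-indexed thread to an arbitrary detaching tower. We may assume the tower is strictly increasing: if it is eventually constant then $H_\w = H_n$ for large $n$ and the stated conclusion is unsatisfiable, so that case is excluded; otherwise we pass to a strictly increasing subsequence. This is harmless because detachment composes (if $K = H \oplus C$ and $H = H' \oplus C'$ then $K = H' \oplus (C' \oplus C)$), so from $H_n \div H_{n+1}$ we get $H_n \div H_m$ for all $m \ge n$; hence for an omitted index $n$ the desired $H_n \div G$ will follow from $H_n \div H_{n_k} \div G$ for a subsequence index $n_k \ge n$.

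First I would build a basis of $H_\w$ adapted to the filtration. Starting from a basis of $H_0$ and using $H_n \div H_{n+1}$ at each step to extend, I obtain a basis $B = \bigsqcup_n D_n$ of $H_\w$ such that $B_n := D_0 \cup \dots \cup D_{n-1}$ is a basis of $H_n$; since the tower is strictly increasing each $D_n$ is nonempty, so I may fix a witness $a_n \in D_n$. Writing $R = B \setminus \{a_n : n<\w\}$ for the remaining basis vectors, I then define $G$ to be the free abelian group on the generating set $R \sqcup \{x\} \sqcup \{y_n : n<\w\}$, where $p_0, p_1, \dots$ enumerate the primes, and embed $H_\w$ by sending each $a_n$ to $x + p_n y_n$ and fixing every element of $R$. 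This map is injective (any relation among the images, read off on the $y_n$-coordinates, forces all coefficients to vanish), so we identify $H_\w$ with its image; the roles of $a_n, x, y_n$ are exactly those of $u_n, e_\w, e_n$ in \cref{exmp:Melnikov}. By construction $G$ is free, contains $H_\w$, and has the same cardinality (we traded the countable set $\{a_n\}$ for $\{x\} \cup \{y_n\}$).

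Both detachment properties then reduce, via \cref{fact:detachment_and_freeness_of_quotient}, to freeness of a quotient. For $H_\w \ndiv G$: in $G/H_\w$ the relations $a_n = 0$ give $\bar x = -p_n \bar y_n$, so $\bar x$ is divisible by every prime $p_n$, while $\bar x \neq 0$ because a coefficient computation in the free group $G$ shows $x \notin H_\w$. A nonzero element of a free abelian group is divisible by only finitely many primes, so $G/H_\w$ is not free and $H_\w$ does not detach. For $H_n \div G$: quotienting $G$ by $H_n = \langle R_n, \{x + p_m y_m : m<n\}\rangle$ (with $R_n = B_n \cap R$) kills the generators in $R_n$ and leaves a free summand on $(R \setminus R_n) \cup \{y_m : m \ge n\}$ together with the finitely presented piece
\[
	A_n = \langle\, x, y_0, \dots, y_{n-1} \mid x + p_m y_m = 0 \ (m<n) \,\rangle.
\]
Eliminating $x$ turns the relations into $p_0 \bar y_0 = p_1 \bar y_1 = \dots = p_{n-1}\bar y_{n-1}$, so $A_n = \Z^n / L$ with $L$ generated by the differences $p_0 y_0 - p_m y_m$; since the $p_m$ are pairwise coprime, $L$ is a direct summand of $\Z^n$ (all elementary divisors are $1$), whence $A_n \cong \Z$ is free. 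Therefore $G/H_n$ is free and $H_n \div G$.

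The single delicate point—exactly as in \cref{exmp:Melnikov}—is the freeness of the finite piece $A_n$, which is where pairwise coprimality of the coefficients is used; everything else is bookkeeping. Finally, the construction is effective in $\seq{H_n}$: the adapted basis and the witnesses $a_n$ are produced by a search for complements (which exist by hypothesis, so the search halts), and $G$ is then read off from its explicit presentation. I expect the only real work beyond \cref{exmp:Melnikov} to be setting up the adapted basis coherently and confirming torsion-freeness of $A_n$ in general.
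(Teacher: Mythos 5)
Your proof is correct and follows essentially the same route as the paper: both isolate a rank-one summand from each layer of the tower and apply the Fuchs--Melnikov prime twist of \cref{exmp:Melnikov} to those summands only, leaving the rest of $H_\w$ as an untouched direct summand of~$G$. The differences are presentational rather than substantive --- the paper packages the countable twisting gadget as \cref{prop:Melnikov_done_short} and verifies $H_n \div G$ by transitivity of detachment, whereas you write down the explicit presentation and check freeness of $G/H_n$ and non-freeness of $G/H_\w$ directly via \cref{fact:detachment_and_freeness_of_quotient}; you also make explicit the reduction to strictly increasing towers, which the paper leaves implicit.
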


We write $\twist(\seq{H_n})$ for the group~$G$. 

\begin{proof}
	Without loss of generality we assume that $H_0$ is the trivial group. As in the proof of \cref{prop:club_and_freeness}, choose subgroups~$K_n$ such that $H_{n+1} = H_n\oplus K_n$, so $H_\w = \bigoplus_n K_n$. As each~$K_n$ is free, we write $K_n = P_n\oplus Q_n$, where $Q_n\cong \Z$. Let $P = \bigoplus_n P_n$ and $Q = \bigoplus Q_n$. 

	Using \cref{prop:Melnikov_done_short} we can find a countable free group $R\supseteq Q$, such that $Q\ndiv R$, but for any~$n$, $Q_{<n} = \bigoplus_{m<n} Q_m$ does detach inside~$R$. We let $G = P\oplus R$. 

	It follows that for all~$n$, $P\oplus  Q_{<n}$ detaches in~$G$. As $H_n$ detaches in $P\oplus Q_{<n}$, and detachment is transitive, we see that each~$H_n$ detaches in~$G$. 

	It also follows that $H_\w = P\oplus Q$ does not detach in~$G$; if $H_\w\div G$ then $Q\div G$ and as $Q\subseteq R\subseteq G$ we would have $Q\div R$ (\cref{prop:detachment_is_forever}).
\end{proof}


\section{Identifying free groups} 
\label{sec:identifying_free_groups}

We now turn to the proof of \cref{thm:identifying_free_groups}.

\subsection{The successor case} 
\label{sub:the_successor_case}

We prove \cref{thm:identifying_free_groups} where~$\kappa$ is a successor cardinal:

\begin{theorem} \label{thm:identifying_freeness-successor}
	Let~$\kappa$ be a successor cardinal. The set of free abelian groups of universe~$\kappa$ is $\Sigma^1_1(L_\kappa)$-complete. 
\end{theorem}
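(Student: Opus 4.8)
The plan is to prove the two bounds separately. Membership in $\Sigma^1_1(L_\kappa)$ is immediate: a group~$G$ of universe~$\kappa$ is free exactly when there exists $B\subseteq\kappa$ that is a basis, and ``$B$ is a basis of $G$'' is a first-order property of $(L_\kappa,\in,G,B)$, so freeness is $\exists B\,\psi(G,B)$. The substance is the hardness direction, for which I would reduce an arbitrary $\Sigma^1_1(L_\kappa)$ set $A\subseteq 2^\kappa$ to the set of free groups. Applying \cref{cor:Sigma11_completeness_of_containing_a_club} to~$A$ yields a $\kappa$-computable $f$ with $f(Y)\subseteq E$ and $Y\in A$ iff $f(Y)$ is nonstationary in~$\kappa$. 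It then suffices to produce, uniformly and $Y$-computably, a group $G=G(Y)$ of universe~$\kappa$ whose standard filtration $\seq{G_\alpha}$ satisfies $\Detach{G}=\kappa\setminus f(Y)$ with every $G_\alpha$ free. Granting this, \cref{prop:club_and_freeness} closes the reduction: $G$ is free iff $\Detach{G}$ contains a club of~$\kappa$ iff $f(Y)$ is nonstationary iff $Y\in A$. Since $f$ and the construction below are $\kappa$-computable, this gives lightface $\Sigma^1_1(L_\kappa)$-completeness.

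The group would be built by recursion along the filtration, with the two kinds of stage dictated by $f(Y)$. At a stage $\alpha\notin f(Y)$ I extend freely, splitting off a new free summand so that $G_\alpha\div G_{\alpha+1}$. At a stage $\alpha\in f(Y)$ I must force $\alpha\notin\Detach{G}$; since $f(Y)\subseteq E$ we have $\cf\alpha=\w$, and (using that $\Detach{\bar G\rest\alpha}$ contains a club of~$\alpha$, see below) I may fix an increasing $\w$-chain $\seq{\gamma_n}$ cofinal in~$\alpha$ inside $\Detach{\bar G\rest\alpha}$, so that $G_{\gamma_n}\div G_{\gamma_{n+1}}$ and $G_\alpha=\bigcup_n G_{\gamma_n}$ is free. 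I then set $G_{\alpha+1}=\twist(\seq{G_{\gamma_n}})$ via \cref{prop:twisting_lemma}. This makes $G_\alpha\ndiv G_{\alpha+1}$, and by \cref{prop:detachment_is_forever} the failure is permanent, so $\alpha\notin\Detach{G}$; simultaneously each piece stays detached, $G_{\gamma_n}\div G_{\alpha+1}$, which is exactly what preserves earlier commitments: for $\beta<\alpha$ with $\beta\in\Detach{G}$ we already have $G_\beta\div G_{\gamma_n}$ for large~$n$, and transitivity of detachment gives $G_\beta\div G_{\alpha+1}$. Taking unions at limits (continuity of the filtration) produces a $Y$-computable~$G$ with $\Detach{G}=\kappa\setminus f(Y)$; effectivity is inherited from the effectivity clause of \cref{prop:twisting_lemma} together with the fact that, for successor~$\kappa$, both $\alpha\mapsto\sing\alpha$ and $E\cap\kappa$ are $\kappa$-computable.

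The subtle point, and the step I expect to be the main obstacle, is guaranteeing that \emph{every} proper initial segment $G_\alpha$ is free, i.e.\ that $G$ is $<\kappa$-free. This is essential: by \cref{fact:subgroups_are_free} subgroups of free groups are free, so if a single $G_\alpha$ failed to be free then $G$ could not be free, wrecking the reduction on the side $Y\in A$. By part~(1) of \cref{prop:club_and_freeness} applied to $\bar G\rest\alpha$, freeness of $G_\alpha$ at a limit~$\alpha$ is guaranteed once $f(Y)\cap\alpha$ is nonstationary in~$\alpha$ (and at regular uncountable~$\alpha$ this is also necessary, by part~(2), so the issue is genuine). For every singular~$\alpha$ — in particular every ordinal of cofinality~$\w$ and every singular cardinal below~$\kappa$ — this is delivered by $f(Y)\subseteq E$ together with Jensen's non-reflection theorem (\cref{thm:square_really}), which gives $E\cap\alpha$ nonstationary in~$\alpha$; this is also what licenses the choice of the chain $\seq{\gamma_n}$ used at twist stages. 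The genuinely delicate case is a regular uncountable cardinal $\lambda<\kappa$, where $E\cap\lambda$ (and hence possibly $f(Y)\cap\lambda$) is stationary in~$\lambda$, so twisting literally at every ordinal of $f(Y)$ may render $G_\lambda$ nonfree exactly when $f(Y)$ reflects at~$\lambda$. Overcoming this will require twisting not at all of $f(Y)$ but at a carefully chosen non-reflecting set of ordinals that is nonetheless stationary precisely when $f(Y)$ is, and arranging this uniformly in~$Y$ while keeping the recursion $\kappa$-computable is where the finer ($\square$-type) fine-structural information developed around \cref{thm:square_really} and \cref{lem:reflecting_Pi11_and_Sigma11_sets_on_F}, beyond the bare non-reflection statement, is needed. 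Once the twist set is non-reflecting, all $G_\alpha$ are free and the equivalence ``$G$ free iff $Y\in A$'' follows as in the first paragraph, giving $\Sigma^1_1(L_\kappa)$-completeness.
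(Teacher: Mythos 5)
Your overall architecture is exactly the paper's: reduce via \cref{cor:Sigma11_completeness_of_containing_a_club} to a set $U=f(Y)\subseteq E$ that is nonstationary iff $Y\in A$, build a filtration that adds a free summand off~$U$ and applies \cref{prop:twisting_lemma} at stages in~$U$, and invoke \cref{prop:club_and_freeness} to convert $\Detach{G}=\kappa\setminus U$ into the desired equivalence. You also correctly isolate the one point where the argument could fail: every $G_\alpha$ must be free, which requires $U\cap\alpha$ to be nonstationary in every limit $\alpha<\kappa$, and $E$ does reflect at regular uncountable cardinals.

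However, you do not close that step --- you end by asserting that one must replace $U$ by some ``carefully chosen non-reflecting set'' obtained from finer $\square$-type machinery, and you leave the construction of that set open. That is a genuine gap, and moreover it points in the wrong direction for this theorem: the refinement $E\cap F(B,\vphi)$ via \cref{lem:reflecting_Pi11_and_Sigma11_sets_on_F} is what the paper needs later for \emph{inaccessible} $\Pi^1_1$-describable cardinals, not here. For a successor cardinal $\kappa$ the problem evaporates by an elementary observation: every limit ordinal in the interval $(\kappa^-,\kappa)$ is singular (there are no cardinals strictly between $\kappa^-$ and $\kappa$), so by \cref{thm:square_really} the set $E$ --- hence $U$ --- does not reflect at any such ordinal. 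One then simply replaces $U$ by $U\setminus(\kappa^-+1)$; this discards only a bounded subset of $\kappa$, so it does not change whether $U$ is stationary in $\kappa$, and it makes $U\cap\alpha=\emptyset$ for every $\alpha\le\kappa^-$, in particular for every regular uncountable $\lambda<\kappa$. With that one line, $U\cap\alpha$ is nonstationary in every limit $\alpha<\kappa$, all the $G_\alpha$ are free, and your reduction goes through exactly as you wrote it. (Two small additional points worth recording: at a twist stage you need a basis of $\bigcup_n G_{\gamma_n}$ to apply \cref{prop:twisting_lemma} effectively, which you obtain by searching, since you already know the group is free and recognising a basis is $\kappa$-computable; and the cofinal chain $\seq{\gamma_n}$ should be chosen disjoint from $U$, e.g.\ consisting of successor ordinals, so that the inductive hypothesis gives $G_{\gamma_n}\div G_{\gamma_{n+1}}$.)
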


\begin{proof}
	Let~$A\subseteq 2^\kappa$ be $\Sigma^1_1$. Given $Y\in 2^\kappa$ we (uniformly) compute an abelian group~$G_Y$ which is free if and only if~$Y\in A$. To begin, we find a set $U = U_Y\subseteq E\cap \kappa$, effectively obtained from~$Y$, such that $Y\in A$ if and only if~$U$ is nonstationary (\cref{cor:Sigma11_completeness_of_containing_a_club}). Without loss of generality, $U\cap \kappa^- = \emptyset$, where $\kappa^-$ is the cardinal predecessor of~$\kappa$. The point here is that  every ordinal in $(\kappa^-,\kappa)$ is singular, and so~$E$ does not reflect at any ordinal in this interval (\cref{thm:square_really}).

	 We will build a filtration $\seq{G_\alpha}_{\alpha<\kappa}$ of a group~$G_Y$ such that each~$G_\alpha$ is free, and $\Detach{\seq{G_\alpha}} = \kappa \setminus U$; our desired equivalence then holds by \cref{prop:club_and_freeness}. 

	We define the sequence $\seq{G_\alpha}$ computably in~$U$. The construction is of course by recursion on~$\alpha$.  In order for the construction to proceed as we will shortly describe, we will need to maintain the following:
	\begin{enumerate}
		\item[(i)] each $G_\alpha$ is free; and
		\item[(ii)] for all $\beta<\alpha$, if $\beta\notin U$ then $G_\beta \div G_\alpha$. 
	\end{enumerate}

	We start with $G_{0}$ being the trivial group. Taking $\alpha<\kappa$, we assume that~$G_\beta$ is defined for all $\beta<\alpha$ and that the above hypotheses hold below~$\alpha$. 

	\medskip

	\noindent\textit{Case 1: $\alpha$ is a successor ordinal and $\alpha-1\notin U$.} We let $G_\alpha = G_{\alpha-1}\oplus \Z$. (i) for~$\alpha$ holds easily. (ii) holds because $G_{\alpha-1}\div G_\alpha$ and detachment is transitive. 

	\smallskip

	\noindent\textit{Case 2: $\alpha$ is a successor ordinal and $\alpha-1\in U$.} Since $\alpha-1\in E$ we know that $\cf(\alpha-1)= \w$. We can choose an increasing and cofinal sequence $\seq{\alpha_n}$ in~$\alpha-1$ which is disjoint from~$U$, for example consisting of successor ordinals. By induction, for all~$n$, $G_{\alpha_n} \div G_{\alpha_{n+1}}$. We can thus apply \cref{prop:twisting_lemma}: we let $G_\alpha = \twist(\seq{G_{\alpha_n}})$. (i) holds by construction. For (ii), let $\beta<\alpha$, $\beta\notin U$. There is some~$n$ such that $\beta<\alpha_n$. By induction, $G_\beta\div G_{\alpha_n}$; by construction, $G_{\alpha_n}\div G_\alpha$.

	\smallskip

	\noindent\textit{Case 3: $\alpha$ is a limit ordinal.} We let $G_\alpha = \bigcup_{\beta<\alpha} G_\beta$. To verify (i) and~(ii) in this case we use the fact that there is a club~$C$ of~$\alpha$ which is disjoint from~$U$ (\cref{thm:square_really}). It follows that $\Detach{\seq{G_\beta}_{\beta<\alpha}}$ contains a club, and so $G_\alpha$ is free (\cref{prop:club_and_freeness}). (ii) follows from~\cref{rmk:detachment_set_is_definable}. 

	\medskip

	Note that in case 2, to perform the twist, we need a basis for~$G_\alpha$. However we know that~$G_\alpha$ is free, so we can simply search for a basis until we find it. Identifying that~$B$ is a basis of a free group~$G$ is $\kappa$-computable.

	\smallskip

	(ii) above implies that $\kappa\setminus U \subseteq \Detach{G_Y}$. However if $\alpha\in U$ then we ensured that $G_{\alpha} \ndiv G_{\alpha+1}$, so $\alpha\notin \Detach{G_Y}$. This completes the proof. 
\end{proof}

We remark that a non-effective, static construction of a $\aleph_1$-free group of size $\aleph_1$ with a prescribed detachment set can be found in \cite[IV]{EklofMekler}.


\subsection{Lightface weak compactness} 
\label{sub:lightface_weak_compactness}

A cardinal~$\kappa$ is weakly compact if and only if it is $\mathbf{\Pi}^1_1$-indescribable. Under $V=L$, for $B\in 2^\kappa$, say that~$\kappa$ is $\Pi^1_1(B)$-describable if there is a $\Pi^1_1(B)$ fact which holds for~$L_\kappa$ but not for $L_\lambda$ for any $\lambda<\kappa$ (we may restrict ourselves to regular $\lambda<\kappa$, since this is expressible by a $\Pi^1_1$-statement). That is, if for some first-order~$\vphi$, for all~$X\in 2^\kappa$, $(L_\kappa,B,X)\models \vphi$, but for all $\lambda<\kappa$, for some $X\in 2^\lambda$, $(L_\lambda, B\rest \lambda, X)\models \lnot\vphi$. For example, the least inaccessible cardinal is $\Pi^1_1$-describable. A cardinal~$\kappa$ is weakly compact if and only if it is $\Pi^1_1(B)$-indescribable for all $B\in 2^\kappa$. The next part of \cref{thm:identifying_free_groups} follows from the following theorem. 

\begin{theorem} \label{thm:identifying_inaccessible_free_groups}
	Let~$\kappa$ be an inaccessible cardinal, and let $B\in 2^\kappa$. If~$\kappa$ is $\Pi^1_1(B)$-describable then the collection of free abelian groups of size~$\kappa$ is $\Sigma^1_1(B)$-complete. 
\end{theorem}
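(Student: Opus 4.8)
The plan is to adapt the successor-cardinal construction of \cref{thm:identifying_freeness-successor}, using the hypothesis of $\Pi^1_1(B)$-describability to supply, at the regular cardinals below~$\kappa$, exactly the non-reflection that Jensen's \cref{thm:square_really} provided automatically in the successor case. As there, given a $\Sigma^1_1(B)$ set $A = \{Y : (L_\kappa,Y,B)\models \exists X_1\,\varphi_A\}$, I would build, $B$-computably from~$Y$, a group $G_Y$ whose freeness is equivalent to $Y\in A$, by constructing a filtration $\seq{G_\alpha}$ of free groups with $\Detach{\seq{G_\alpha}} = \kappa\setminus U_Y$ for a suitable $U_Y\subseteq E\cap\kappa$; by \cref{prop:club_and_freeness} this makes $G_Y$ free precisely when $U_Y$ is nonstationary in~$\kappa$. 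The $\Sigma^1_1(B)$ upper bound is the usual one (``$G_Y$ has a basis''), so the content is the reduction.

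The conceptual heart is the choice of $U_Y$. Let $\forall X_2\,\varphi_D$ be a $\Pi^1_1(B)$ sentence witnessing describability, so that $(L_\kappa,B)\models\forall X_2\,\varphi_D$ while $(L_\lambda,B\rest\lambda)\models\lnot\forall X_2\,\varphi_D$ for every regular $\lambda<\kappa$. I would then form the single $\Pi^1_1$ formula $\forall X\,\chi$, where $X$ codes a pair $(X_1,X_2)$ and $\chi := \varphi_D(X_2)\wedge\lnot\varphi_A(X_1)$, so that $\forall X\,\chi$ is equivalent to $(\forall X_2\,\varphi_D)\wedge(\forall X_1\,\lnot\varphi_A)$, and set $U_Y = E\cap F(Y\oplus B,\chi)$ in the sense of \cref{def:the_Sigma_11_approximating_set}. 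Two computations then do the work. First, at~$\kappa$ the describing conjunct is true, so $(L_\kappa,Y,B)\models\forall X\,\chi$ iff $(L_\kappa,Y,B)\models\forall X_1\,\lnot\varphi_A$, i.e.\ iff $Y\notin A$; \cref{lem:reflecting_Pi11_and_Sigma11_sets_on_F} then yields that $U_Y$ is stationary when $Y\notin A$ and that $F(Y\oplus B,\chi)$, hence $U_Y$, is nonstationary when $Y\in A$. Second, at every regular $\lambda<\kappa$ describability forces $(L_\lambda,\cdot)\models\lnot\forall X\,\chi$, since the describing conjunct already fails; so part~(1) of \cref{lem:reflecting_Pi11_and_Sigma11_sets_on_F}, applied at~$\lambda$, gives that $F(Y\oplus B,\chi)\cap\lambda$, and therefore $U_Y\cap\lambda$, is nonstationary in~$\lambda$. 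At singular $\lambda<\kappa$ the nonstationarity of $U_Y\cap\lambda$ is free from $U_Y\subseteq E$ together with \cref{thm:square_really}. Thus $U_Y\cap\lambda$ is nonstationary at \emph{every} limit $\lambda<\kappa$, which is exactly what is needed to keep each $G_\lambda$ free at the limit stages, including the regular ones, via \cref{prop:club_and_freeness}.

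With $U_Y$ so chosen, the recursion runs as in \cref{thm:identifying_freeness-successor}: at a successor stage above a point of $U_Y$ I would apply $\twist$ (\cref{prop:twisting_lemma}) along a cofinal $\w$-sequence in that point disjoint from $U_Y$, forcing non-detachment; elsewhere I would adjoin a free generator; and at limits I would take unions, using the just-established club of non-reflection to verify freeness and to propagate detachment (\cref{rmk:detachment_set_is_definable}). The invariants (each $G_\alpha$ free, and $G_\beta\div G_\alpha$ whenever $\beta\notin U_Y$) are maintained as before and yield $\Detach{G_Y}=\kappa\setminus U_Y$, completing the reduction.

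The main obstacle I anticipate is \emph{effectivity}. Unlike the successor case, where $\alpha\mapsto\sing{\alpha}$ and hence $E\cap\kappa$ are $\kappa$-computable, at an inaccessible~$\kappa$ the singular predicate is only $\kappa$-c.e.\ (the regular cardinals below~$\kappa$ cannot be recognised computably), so $U_Y$ is merely $(Y\oplus B)$-c.e. Worse, every $\gamma\in E$ has $\sing{\gamma}>\gamma$, so membership $\gamma\in U_Y$ can only be confirmed at a stage strictly past~$\gamma+1$, and the naive recursion — which must decide $\gamma\in U_Y$ when it builds $G_{\gamma+1}$ — is unavailable. The fix is a delayed construction: adjoin free generators by default, and when the confirmation $\gamma\in U_Y$ arrives at stage $\sing{\gamma}$, perform the twist of $G_\gamma$ at that stage, re-adjoining the free generators accumulated in the interim. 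Since adjoining a free direct summand cannot restore freeness of the quotient, one still gets $G_\gamma\ndiv G_{\sing{\gamma}}$ and hence $G_\gamma\ndiv G_Y$ by \cref{prop:detachment_is_forever}, so $\gamma\notin\Detach{G_Y}$; and if $\gamma\notin U_Y$ no confirmation ever fires, so $G_\gamma$ is never twisted and $\gamma\in\Detach{G_Y}$. Organising these possibly overlapping delayed twists so that the filtration stays continuous, every $G_\alpha$ remains free, and the detachment set is exactly $\kappa\setminus U_Y$ — all while keeping the stage-$\alpha$ instructions $\kappa$-computable — is the technical crux of the argument.
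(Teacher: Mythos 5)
Your reduction is exactly the paper's: the paper takes $F=E\cap F(B,\vphi)$ (with $\forall X\,\vphi$ the describing sentence) and sets $U_Y=F\cap F(Y,\lnot\psi)$, which is the same set as your $E\cap F(Y\oplus B,\chi)$ up to the cosmetic difference between intersecting two $F$-sets and forming one $F$-set for the conjunction; and your two computations (stationarity at~$\kappa$ controlled by $\varphi_A$ via \cref{lem:reflecting_Pi11_and_Sigma11_sets_on_F}, non-reflection at regular $\lambda<\kappa$ forced by the describing conjunct, non-reflection at singular $\lambda$ from \cref{thm:square_really}) are precisely the paper's \cref{prop:F_does_not_reflect}. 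You also correctly isolate the genuine new difficulty at an inaccessible: $U_Y$ is only $(Y\oplus B)$-c.e., and $\gamma\in U_Y$ is confirmed only at stage $\sing{\gamma}>\gamma+1$.

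The gap is that the step you call ``the technical crux'' is left unresolved, and it is exactly where the paper's proof does its remaining work. Your delayed twist, performed at stage $\sing{\gamma}$ on the filtration as indexed by stages, forces $H_\beta\ndiv H_{\sing{\gamma}+1}$ for \emph{every} $\beta\in(\gamma,\sing{\gamma}]$, not just for $\gamma$: the quotient by any such $H_\beta$ contains the non-free quotient of the twist as a direct summand. So the stage-indexed filtration acquires spurious non-detachment points throughout the interval $(\gamma,\sing{\gamma}]$, which lie outside $U_Y$ and outside $E$; a priori these could accumulate stationarily below some limit stage $t$ even though $U_Y\cap t$ is nonstationary, and then $H_{<t}$ would fail to be free and the construction would die. (The paper remarks that this does not in fact happen, but declines to prove it.) The paper's resolution is a re-indexing device: together with the stage-indexed sequence $\seq{H_s}$ it maintains, at each stage $s$, a filtration $\seq{G_{\alpha,s}}_{\alpha\le s}$ of $H_s$; when $\alpha$ is confirmed in $U$ at stage $s$, all the groups previously indexed by $(\alpha,s]$ are swallowed into the new $G_{\alpha+1,s}$ (a twist of an $\w$-sequence cofinal in $\alpha$, direct-summed with a complement of $G_{\alpha,s}$ in $H_{s-1}$), so the invariant $\Detach{\bar G_s}=s\setminus U_s$ holds literally at every stage and freeness at limits follows from non-reflection of $U_s$ alone. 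Without this (or a proof that the spurious twists really do wash out club-often below every limit), your argument is incomplete at the point you flagged.
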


Note that if $\kappa$ is $\Pi^1_1(B)$-describable and $C$ $\kappa$-computes~$B$, then $\kappa$ is also $\Pi^1_1(C)$-describable. Also note that \cref{thm:identifying_inaccessible_free_groups} implies that the collection of free abelian groups on the least inaccessible cardinal is $\Sigma^1_1$-complete. 


\subsection{An elaboration on square for inaccessible cardinals} 
\label{sub:elaborations_on_square_for_inaccessible_cardinals}

Toward proving \cref{thm:identifying_inaccessible_free_groups}, we need an elaboration on the class~$E$ above and on \cref{cor:Sigma11_completeness_of_containing_a_club}. Consider what would go wrong if we tried to replicate the proof of \cref{thm:identifying_freeness-successor} for an inaccessible cardinal~$\kappa$. One problem is that $E\cap \kappa$ is no longer~$\kappa$-computable, merely $\kappa$-c.e.; we will need to address this problem in the construction below by approximating the final filtration~$\seq{G_\alpha}$ while still building a computable group. A more serious obstacle is that~$E$ does reflect at all regular cardinals, and so unboundedly below~$\kappa$. This would mean that we will not be able to ensure that all the groups~$G_\alpha$ that we build along the way are free. We need to restrict ourselves to a sparser class which will be stationary in~$\kappa$ but not reflect (will not be stationary in any $\alpha<\kappa$). 

If~$\kappa$ is weakly compact then every stationary subset of~$\kappa$ reflects (being stationary is $\Pi^1_1$). Hence there is no hope to perform this construction in this case. And indeed, below we use this very fact to give an easy characterisation of free groups of a weakly compact size. Jensen showed that in~$L$, this is the only problematic case.

\smallskip

Recall \cref{def:the_Sigma_11_approximating_set} of the set $F(B,\vphi)$. 



\begin{proposition} \label{prop:F_does_not_reflect}
	Let~$\kappa$ be inaccessible and $\Pi^1_1(B)$-describable, say by the formula $\forall X\,\vphi$. Then $E\cap F(B,\vphi)$ is stationary in~$\kappa$, but does not reflect at any limit ordinal $\alpha<\kappa$. 
\end{proposition}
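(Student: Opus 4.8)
The plan is to read off stationarity directly from Lemma~\ref{lem:reflecting_Pi11_and_Sigma11_sets_on_F}, and to establish non-reflection by splitting on whether the limit ordinal $\alpha<\kappa$ is singular or regular. Since $\forall X\,\vphi$ describes~$\kappa$, it in particular holds of $(L_\kappa,B)$; as $\kappa$ is inaccessible it is regular and uncountable, so Lemma~\ref{lem:reflecting_Pi11_and_Sigma11_sets_on_F}(2) applies verbatim and gives that $E\cap F(B,\vphi)$ is stationary in~$\kappa$. This disposes of the first assertion.

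For non-reflection I would fix a limit ordinal $\alpha<\kappa$ and show that $E\cap F(B,\vphi)\cap\alpha$ is nonstationary in~$\alpha$. If $\alpha$ is singular this is immediate from Jensen's Theorem~\ref{thm:square_really}: there $E\cap\alpha$ is already nonstationary in~$\alpha$, and our set is contained in it. The substance is therefore the case where $\alpha$ is regular, hence a regular cardinal; the countable case $\alpha=\w$ is vacuous, since there are no singular ordinals below~$\w$, so I would assume $\alpha$ uncountable.

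The key observation in the regular case is that $F$ \emph{localises}: I claim $F(B,\vphi)\cap\alpha = F(B\rest\alpha,\vphi)$, where the right-hand side denotes the set of Definition~\ref{def:the_Sigma_11_approximating_set} computed at level~$\alpha$ with oracle $B\rest\alpha$. This holds because, for $\gamma<\alpha$, membership of~$\gamma$ in $F(B,\vphi)$ depends only on~$\gamma$, on $B\rest\gamma = (B\rest\alpha)\rest\gamma$, on the parameter~$r$, and on the $J$-hierarchy up to~$\sing{\gamma}$; and since $\alpha$ is a cardinal and $\gamma<\alpha$ is singular, we have $\sing{\gamma}<\gamma^+\le\alpha$, so all of this data lies below~$\alpha$ and is computed identically in both settings. (If $r\notin L_\alpha$ then no $\gamma<\alpha$ satisfies $r\in J_\gamma$, so $F(B,\vphi)\cap\alpha=\emptyset$ and we are done trivially.) Granting the localisation, I would invoke describability: because $\forall X\,\vphi$ describes~$\kappa$, it fails of $(L_\lambda,B\rest\lambda)$ for every $\lambda<\kappa$, in particular $(L_\alpha,B\rest\alpha)\models\lnot\forall X\,\vphi$ (here $L_\alpha=J_\alpha$ since~$\alpha$ is a cardinal). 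Applying Lemma~\ref{lem:reflecting_Pi11_and_Sigma11_sets_on_F}(1) at level~$\alpha$, with oracle $B\rest\alpha$, then yields that $F(B\rest\alpha,\vphi)=F(B,\vphi)\cap\alpha$ is nonstationary in~$\alpha$, whence so is the smaller set $E\cap F(B,\vphi)\cap\alpha$.

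The main obstacle I anticipate is verifying the localisation identity cleanly, i.e.\ that every object witnessing $\gamma\in F(B,\vphi)$ for $\gamma<\alpha$ is genuinely absolute between level~$\kappa$ and level~$\alpha$; the crucial point is the bound $\sing{\gamma}<\gamma^+\le\alpha$, which guarantees that $J_{\sing{\gamma}}$ and $B\rest\gamma$ are the same in both settings. This is also conceptually the heart of the matter: $E$ alone reflects at every regular uncountable cardinal (by the very argument used for stationarity), so it is precisely the failure of the $\Pi^1_1(B)$ property below~$\kappa$, as encoded by~$F$, that is needed to destroy reflection at regular $\alpha<\kappa$.
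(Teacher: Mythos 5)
Your proof is correct and takes essentially the same route as the paper's: stationarity from \cref{lem:reflecting_Pi11_and_Sigma11_sets_on_F}(2), the singular case from Jensen's non-reflection theorem, and the regular case from describability together with \cref{lem:reflecting_Pi11_and_Sigma11_sets_on_F}(1) applied at level~$\alpha$ with oracle $B\rest\alpha$. The localisation identity $F(B,\vphi)\cap\alpha=F(B\rest\alpha,\vphi)$, justified by $\sing{\gamma}<\gamma^+\le\alpha$ for singular $\gamma<\alpha$, is exactly the step the paper leaves implicit, and your explicit verification of it is sound.
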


\begin{proof}
	That~$E\cap F(B,\vphi)$ is stationary follows from \cref{lem:reflecting_Pi11_and_Sigma11_sets_on_F}(2). 

	\smallskip

	Let $\alpha<\kappa$ be a limit ordinal. If~$\alpha$ is singular, then we know that~$E$ does not reflect at~$\alpha$. If~$\alpha$ is a regular cardinal then by assumption, $(L_\alpha,B\rest \alpha)\models \exists X\lnot \vphi$; by \cref{lem:reflecting_Pi11_and_Sigma11_sets_on_F}, $F(B,\vphi)$ is nonstationary in~$\alpha$. 
\end{proof}



For brevity let $F = E\cap F(B,\vphi)$. Replacing~$E$ by~$F$ in the proof of \cref{lem:reflecting_Pi11_and_Sigma11_sets_on_F} shows that the nonstationary ideal on~$\kappa$, in fact its restriction to~$F$, is $\Sigma^1_1(F)$-complete. Copying the construction proving \cref{thm:identifying_freeness-successor} shows that the collection of free abelian groups of size~$\kappa$ is $\Sigma^1_1(F)$-complete. However, this does not quite give \cref{thm:identifying_inaccessible_free_groups}, because~$F$ may fail to be $B$-computable; it is merely $B$-c.e. As mentioned above, we modify the construction to approximate the desired result.

\begin{proof}[Proof of \cref{thm:identifying_inaccessible_free_groups}]
Let~$F = E\cap F(B,\vphi)$, where $\kappa$ is $\Pi^1_1(B)$-describable, witnessed by the formula $\forall X\,\vphi$. Let~$A$ be $\Sigma^1_1(B)$, say defined by the formula $\exists X\,\psi$. Given $Y\in 2^\kappa$ we let $U = U_Y = F\cap F(Y,\lnot\psi)$. If $Y\in A$ then $F(Y,\lnot\psi)$ is nonstationary in~$\kappa$ (\cref{lem:reflecting_Pi11_and_Sigma11_sets_on_F}), and so~$U$ is nonstationary. If $Y\notin A$ then by the same lemma,~$U$ is stationary, as it contains $E\cap F((B,Y),\vphi\wedge \lnot\psi)$. 

Again our aim is to build a group~$G_Y$ of universe~$\kappa$ and a filtration $\bar G = \seq{G_\alpha}_{\alpha<\kappa}$ of~$G$ such that $\Detach{\bar G} = \kappa\setminus U$. The group~$G_Y$ needs to be $Y\oplus B$-computable, uniformly in~$Y$; but as mentioned above, the filtration $\bar G$ will not. 

What we do have, effectively from~$Y\oplus B$, is an enumeration of~$U$: an increasing and continuous sequence $\seq{U_s}_{s<\kappa}$ of sets in~$L_\kappa$ such that $U = \bigcup_{s<\kappa} U_s$. Namely we let~$U_s$ be the collection of $\alpha\in U$ such that $\sing{\alpha}< s$. Again the point is that the set of singular ordinals below~$\kappa$ is~$\kappa$-c.e.; once we see that~$\alpha$ is singular we can effectively, from $B\oplus Y$, check whether $\alpha\in U$ or not. Note that this means that for any cardinal~$\lambda<\kappa$, $U_\lambda = U\cap \lambda$. For all~$s$, $U_s\subseteq s$. 

We do define~$G_Y$ by building a $Y\oplus B$-computable increasing sequence $\seq{H_s}$. The problem with copying the previous construction is that at a late stage~$s$ we may see some relatively small~$\alpha$ enter~$s$. Now we could twist~$H_\alpha$ inside~$H_{s+1}$. But this would cause all the groups~$H_\beta$ for $\beta\in (\alpha,s]$ to be twisted inside~$H_{s+1}$ as well. This would result in our twisting at places outside~$U$ (and outside~$F$). At the very end this shouldn't matter; we could argue that the differences are washed outside some club of~$\kappa$. The difficulty though is to explain why \emph{each} group~$H_\gamma$ is free. Na\"{i}vely, if~$t$ is a limit of such stages~$s$ as above, then while~$U\cap t$ is nonstationary in~$t$, it is conceivable that the added twisting would cause a stationary amount of twisting, and then $H_{<t}$ would fail to be free. This in fact does not happen, but we prefer to present a modified construction. Our approach is to re-index the filtration. Namely, at stage~$s$ we declare that all the groups $G_\beta$ for $\beta\in (\alpha,s]$ are swallowed inside the new $G_{\alpha+1}$. 

So together with the sequence~$\seq{H_s}$ we define filtrations $\bar G_s = \seq{G_{\alpha,s}}_{\alpha<s}$ of~$H_s$ whose limit will be the desired filtration~$\seq{G_\alpha}$. So $H_s = G_{s,s}$. The inductive assumption that makes everything work is:
\begin{enumerate}
	\item[(i)] each~$H_s$ is free;
	\item[(ii)] $\Detach{\bar G_s} = s \setminus U_s$.  
	\item[(iii)] For all $t<s$ and all $\beta<t$, if $U_s\rest \beta = U_t\rest \beta$ then $G_{\beta,t} = G_{\beta,s}$. 
\end{enumerate}

Note that since $U_s\subseteq F$, (ii) implies that for all limit $s<\kappa$, $\Detach{\bar G_s}$ contains a club of~$s$. Suppose that these objects have been defined for all~$t<s$. At stage~$s$ we act as follows.

	\medskip

	\noindent\textit{Case 1: $s$ is a successor ordinal.} If $U_{s}= U_{s-1}$ then we let $H_{s} = H_{s-1}\oplus \Z$, $G_{\alpha,s} = G_{\alpha,s-1}$ for all~$\alpha<s$ and $G_{s,s} = H_{s}$. In this case ensuring that (i), (ii) and (iii) above hold for~$s$ is immediate. 

	Suppose that $U_s \ne U_{s-1}$; let~$\alpha$ be the least element of $U_s\setminus U_{s-1}$. For all $\beta\le \alpha$, we let $G_{\beta,s} = G_{\beta,s-1}$. By induction, $G_{\alpha,s}\div H_{s-1}$. Write $H_{s-1}$ as the direct sum $G_{\alpha,s}\oplus K_s$. Find a cofinal $\w$-sequence $\seq{\alpha_n}$ in~$\alpha$ disjoint from~$U_{s}$. We let $G_{\alpha+1,s} = \twist(\seq{G_{\alpha_n,s}})\oplus K_s$. So $G_{\alpha+1,s}\supset H_{s-1}$, and $G_{\alpha,s}$ does not detach in $G_{\alpha+1,s}$; but for all $\beta\in \alpha\setminus U_{s-1} = \alpha\setminus U_s$, $G_{\beta,s}\div G_{\alpha+1,s}$. 

	We then go on defining $G_{\beta,s}$ for $\beta\in (\alpha+1,s]$ as in the previous construction, twisting on elements of~$U_s$ and adding copies of~$\Int$ outside~$U_s$, taking unions at limit levels; we let $H_s = G_{s,s}$.
	The verification of~(i) and~(ii) proceeds as in the proof of \cref{thm:identifying_freeness-successor}, again using the fact that~$U_s$ does not reflect at any $\beta\le s$. (iii) holds by our instructions. 

	\smallskip

	\noindent\textit{Case 2: $s$ is a limit ordinal.} Let 
	\[
		\gamma = \sup \left\{ \beta<s \,:\,  \exists t<s\,\, \big(U_t\rest \beta = U_s\rest \beta\big) \right\}.
	\]
	For all $\beta < \gamma$ we let $G_{\beta,s} = \lim_{t\to s} G_{\beta,t}$; the limit exists by (iii), and this definition ensures that (iii) holds at~$s$ as well. Further, we claim that $H_{<s} = \bigcup_{t<s} H_t$ actually equals $G_{\gamma,s} = \bigcup_{\beta<\gamma} G_{\beta,s}$. This is because for each $t<s$ there is some $r\in (t,s)$ and some $\alpha<\gamma$ which enters~$U$ at stage~$r$; at stage~$r$ we define $G_{\alpha+1,r}$ to extend~$H_t$. Now by induction, for all $\beta<\gamma$, $G_{\beta, s}$ is free. We show that (ii) holds: $\Detach{\seq{G_{\beta,t}}_{\beta<\gamma}}$ equals $\gamma\setminus U_s$; this uses the fact that $U_s = \bigcup_{t<s} U_t$. For if $\beta\in U_t$ for some $t<s$ then the construction ensures that for all $r\in [t,s)$, $G_{\beta,r}\ndiv G_{\beta+1, r}$. And if $\beta\notin U_s$ then for all $t\in (\beta,s)$, for all $\alpha\in (\beta, t)$, $G_{\beta,t}\div G_{\alpha,t}$; for each~$\alpha\in (\beta,s)$ we can find some $t\in (\beta, s)$ such that $G_{\alpha,t} = G_{\alpha,s}$ and $G_{\beta,t} = G_{\beta,s}$. 

	Finally, the fact that~$U_s$ does not reflect at~$s$ implies that $G_{\gamma,s}$ is free. Now as at the successor case, we continue building the sequence $\seq{G_{\beta,s}}$ for $\beta\in (\gamma,s)$ (if $\gamma<s$) as in the proof of \cref{thm:identifying_freeness-successor}, using~$U_s$, and let~$H_s = G_{s,s}$. 

	\medskip

	This completes the construction; applying the argument above to $s = \kappa$ completes the proof. Also note that for all $\alpha< \kappa$, $|G_\alpha|\le |\alpha|$, as for each cardinal~$\lambda<\kappa$, $U_\lambda = U\rest\lambda$; this implies that for all $\beta<\lambda$, $G_\beta = G_{\beta,\lambda}$. 
\end{proof}


\subsection{The weakly compact case} 
\label{sub:the_weakly_compact_case}

We turn to the proof of \cref{thm:identifying_free_groups}(2). Unlike the previous cases, here we not only have to prove hardness, but also membership in the class. This membership follows from an easy characterisation. The boldface version of the following proposition (which applies to weakly compact cardinals) was observed by A.~Mekler in his Ph.D.\ thesis. 

\begin{proposition} \label{prop:weak_compact_characterisation_of_freeness}
	Let~$\kappa$ be an inaccessible cardinal, let $G$ be a group of universe~$\kappa$, and suppose that~$\kappa$ is $\Pi^1_1(G)$-indescribable. Then~$G$ is free if and only if every subgroup of~$G$ of size smaller than~$\kappa$ is free. 
\end{proposition}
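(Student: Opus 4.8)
The forward direction is immediate: if $G$ is free then by \cref{fact:subgroups_are_free} every subgroup of $G$ is free, in particular those of size smaller than $\kappa$. All the content is in the converse, and my plan is to obtain it directly from $\Pi^1_1(G)$-indescribability, by reflecting non-freeness of $G$ down to a subgroup of size smaller than~$\kappa$. (One could instead argue through the detachment set and stationary reflection via \cref{prop:club_and_freeness}, but the direct reflection of non-freeness is cleaner.)

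The key observation is that \emph{non-freeness of $G$ is a $\Pi^1_1(G)$ property of $(L_\kappa,G)$}. Indeed, for $X\in 2^\kappa$, the assertion ``$X$ is a basis of $G$'' --- that $X$ is linearly independent and spans $G$ --- is first-order over $(L_\kappa,G,X)$, since it only quantifies over elements of $G$ (ordinals below~$\kappa$), finite tuples from~$X$, and integer coefficients, all of which live in $L_\kappa$, with the group operation supplied by the predicate~$G$. Hence ``$G$ is free'' is the $\Sigma^1_1(G)$ statement $\exists X\,(X\text{ is a basis})$, and its negation $\forall X\,(X\text{ is not a basis})$ is $\Pi^1_1(G)$.

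Now suppose, for a contradiction, that $G$ is not free. Let $\Phi$ be the conjunction of the $\Pi^1_1(G)$ sentence expressing non-freeness, the first-order sentence asserting that the universe is closed under the operation coded by~$G$ (so that the structure is genuinely a group), and the $\Pi^1_1$ sentence asserting that the universe is a regular cardinal. Then $\Phi$ is $\Pi^1_1(G)$ and $(L_\kappa,G)\models\Phi$: the whole of $\kappa$ is closed under the operation, $\kappa$ is regular, and $G$ is non-free by assumption. Since $\kappa$ is $\Pi^1_1(G)$-indescribable, $\Phi$ reflects, and (as noted in the paper, regularity being $\Pi^1_1$-expressible) we may take the reflecting ordinal regular: there is a regular $\lambda<\kappa$ with $(L_\lambda,G\rest{\lambda})\models\Phi$. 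Being regular, $\lambda$ is a cardinal, hence closed under the pairing used to code~$G$, so the closure clause of~$\Phi$ guarantees that $G\rest{\lambda}$ is the operation of a genuine subgroup of $G$ of size at most $\lambda<\kappa$. Meanwhile the non-freeness clause, with its quantifier $\forall X$ ranging over all subsets of~$\lambda$, asserts that no subset of $\lambda$ is a basis of $G\rest{\lambda}$; since any basis of this group is a subset of its universe~$\lambda$, this says precisely that $G\rest{\lambda}$ is not free. This contradicts the hypothesis that every subgroup of $G$ of size smaller than~$\kappa$ is free, so $G$ is free after all.

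The step I expect to require the most care is the interface between the real-world meaning of freeness and its reflected version: I must make sure that reflection lands on a $\lambda$ for which $G\rest{\lambda}$ genuinely is a subgroup (this is exactly what the regularity and closure clauses of~$\Phi$ secure), and that the second-order quantifier in the reflected non-freeness statement ranges over \emph{all} subsets of~$\lambda$, so that it faithfully expresses non-freeness of the subgroup rather than some weaker relativised notion. Given the paper's conventions on $\Pi^1_1$ reflection at $\Pi^1_1(G)$-indescribable cardinals, both points are routine.
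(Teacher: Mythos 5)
Your proof is correct, and it is a genuine (if mild) variant of the paper's. Both arguments run the contrapositive through $\Pi^1_1(G)$-indescribability: assume $G$ is not free, reflect a $\Pi^1_1(G)$ sentence to a regular $\lambda<\kappa$ closed under the group operation, and read off a non-free subgroup $G\rest{\lambda}$ of size smaller than~$\kappa$. The difference lies in which $\Pi^1_1(G)$ sentence gets reflected. You reflect the literal statement $\forall X\,(X\text{ is not a basis})$, observing that any basis of $G\rest{\lambda}$ is necessarily a subset of~$\lambda$, so the reflected statement faithfully expresses non-freeness of the subgroup; no auxiliary lemma is needed beyond the definition of a basis. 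The paper instead reflects $\lnot\psi$, where $\psi$ is the $\Sigma^1_1$ filtration-based characterisation of freeness (each $G_\alpha=\Span(G\cap\alpha)$ has a basis in $L_\lambda$, and $\Detach{G_\lambda}$ contains a club); identifying $\psi$ with freeness at regular levels then requires \cref{prop:club_and_freeness}. Your route is the more elementary of the two, while the paper's choice ties the proposition to the detachment-set machinery used throughout the rest of the argument, though that machinery is not actually needed here. The points you flag as requiring care --- that the reflecting $\lambda$ is closed under the coding and the group operation, and that the second-order quantifier at level $\lambda$ ranges over all of $2^\lambda$ --- are exactly the ones the paper also has to address (via the clause $G_\lambda=G\rest{\lambda}$ and its conventions on describability), and your handling of them is fine.
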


Note that under our assumption that $V=L$, every subgroup of such a group~$G$ of size less than~$\kappa$ is an element of~$L_\kappa$ (we say that it is \emph{$\kappa$-finite}). The collection of $\kappa$-finite free groups is $\kappa$-c.e.\ (as usual, search for a basis; every basis is $\kappa$-finite). This shows that for any~$B\in 2^\kappa$, if $\kappa$ is $\Pi^1_1(B)$-indescribable then the index-set of the $B$-computable free groups is $\Pi^0_2(B)$. If~$\kappa$ is weakly compact, this shows that the collection of all free abelian groups of size~$\kappa$ is $\Pi^0_2(L_\kappa)$.

\begin{proof}[Proof of \cref{prop:weak_compact_characterisation_of_freeness}]
	Recall that the standard filtration of~$G$ is defined by letting $G_\alpha = \Span(G\cap \alpha)$, and that we let~$\Detach{G}$ be the detachment set given by this standard filtration. 

	Let $\lambda \le \kappa$ be regular. First note that if $\lambda$ is closed under the group operation ($G_\lambda = G\rest \lambda$) then for all $\alpha<\lambda$, $G_\alpha\in L_\lambda$. 

	Consider the $\Sigma^1_1$ sentence~$\psi$ which for such $\lambda \le \kappa$, says that:
	\begin{itemize}
		\item for all $\alpha<\lambda$, $G_\alpha$ is free (has a basis in $L_\lambda$); and
		\item $\Detach{G_\lambda}$ contains a club. 
	\end{itemize}
 	For such~$\lambda$,  $(L_\lambda, G_\lambda)\models \psi$ if and only if~$G_\lambda$ is free. By indescribability, if~$G$ is not free then there is some regular $\lambda<\kappa$ such that $G_\lambda = G\rest \lambda$ and $(L_\lambda,G\rest\lambda)\models \lnot\psi$. 
\end{proof}

Assuming that $\kappa$ is weakly compact, as observed, this implies that the index set of the computable free abelian groups on~$\kappa$ is~$\Pi^0_2$. However above the halting problem we can save a quantifier. 

\begin{proposition} \label{prop:weakly_compact:free_is_Pi01}
	Let~$\kappa$ be weakly compact. Then the collection of free abelian groups on~$\kappa$ is $\Pi^0_1(\emptyset')$-complete in the collection of groups.
\end{proposition}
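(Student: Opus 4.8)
The plan is to prove the two halves separately: membership in $\Pi^0_1(\emptyset')$, and $\Pi^0_1(\emptyset')$-hardness. For membership I would invoke \cref{prop:weak_compact_characterisation_of_freeness}: since $\kappa$ is weakly compact it is $\Pi^1_1(G)$-indescribable for the input group $G$, so $G$ is free if and only if every $\kappa$-finite subgroup of~$G$ is free. The naive reading gives $\Pi^0_2$, since ``$H$ is free'' is a priori only $\Sigma^0_1$ (search for a basis). The point of the saved quantifier is that for a \emph{$\kappa$-finite} group~$H$, freeness is in fact $\emptyset'$-decidable: using $\emptyset'$ one computes $\lambda=|H|$ (recognising cardinals below~$\kappa$ is a $\Pi^0_1$ question, hence decided by $\emptyset'$) and thence $\lambda^+$, forms the $\kappa$-finite set $L_{\lambda^+}$ (here inaccessibility of~$\kappa$ guarantees $\lambda^+<\kappa$), and checks in a bounded way whether~$H$ has a basis in $L_{\lambda^+}$; by GCH in~$L$ every subset of~$H$ lies in $L_{\lambda^+}$, so this decides freeness correctly. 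Since ``$H$ is a subgroup of~$G$'' is $G$-decidable, the matrix ``$H$ is a subgroup of $G$ $\to$ $H$ is free'' is $(G\oplus\emptyset')$-decidable, and the first-order universal quantifier over $\kappa$-finite~$H$ puts $G$-freeness in $\Pi^0_1(\emptyset')$.

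For hardness I would reduce an arbitrary $\Pi^0_1(\emptyset')=\Pi^0_2$ set~$R$. Write $Y\in R \Iff \forall a\,\exists s\,\rho(Y,a,s)$ with $\rho$ $\kappa$-computable, and set $W_{a,Y}=\{s:\rho(Y,a,s)\}$, a uniformly $Y$-c.e.\ family. Let $f(Y)=G=\bigoplus_{a<\kappa} H_a$, where each gadget $H_a=H_a(Y)$ is built on generators $g_0,g_1,\dots$ by imposing $2g_{n+1}=g_n$ for as long as $W_{a,Y}$ has not yet fired, and identifying all remaining generators once it does. Then $H_a\cong \Z[1/2]$ (torsion-free but non-free) if $W_{a,Y}=\emptyset$, and $H_a\cong \Z$ (free) if $W_{a,Y}\ne\emptyset$. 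A direct sum of free groups is free, and each summand is a direct summand and hence a subgroup of a free group, so free by \cref{fact:subgroups_are_free}; thus $G$ is free iff every $H_a$ is free iff $\forall a\,(W_{a,Y}\ne\emptyset)$ iff $Y\in R$. The map~$f$ is $\kappa$-computable and outputs a group for every~$Y$, so this is the desired reduction. Note that this direction does not use weak compactness at all, only the elementary fact that freeness of a direct sum is equivalent to freeness of every summand.

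The main obstacle is the \emph{inversion} in the gadget: to reach the full $\Pi^0_2$ level I need the local non-freeness of $H_a$ to track a $\Pi^0_1$ (co-c.e., ``$W_{a,Y}$ never fires'') event rather than a $\Sigma^0_1$ one, and I must check that the monotone ``keep dividing until a signal arrives'' presentation really yields a $\kappa$-computable group uniformly in~$a$ and~$Y$ (the division steps are interleaved with the $\kappa$-stage enumeration of $W_{a,Y}$, and no relation is ever retracted, so the domain and operation stay $\kappa$-computable). On the membership side the only delicate point is confirming that $\kappa$-finite freeness genuinely \emph{requires} $\emptyset'$: the search bound $\lambda^+$ is not $\kappa$-computable precisely because the cardinals below an inaccessible form a club, so recognising them is $\Pi^0_1$ but not $\kappa$-computable. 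This is exactly what pins the complexity at $\Pi^0_1(\emptyset')$ rather than $\Pi^0_1$ and makes the completeness statement consistent.
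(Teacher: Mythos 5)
Your membership argument is essentially the paper's: apply \cref{prop:weak_compact_characterisation_of_freeness} and note that freeness of a $\kappa$-finite subgroup $H$ becomes a bounded search once $\emptyset'$ has located a cardinal $\lambda$ with $H\in L_\lambda$. (Take such a $\lambda$ rather than $\lambda=|H|$: a set of size $\lambda$ need not lie in $L_{\lambda^+}$ --- consider $\{\omega_5\}$ --- so you should first pass to a copy of $H$ with domain contained in $\lambda$, or quantify over cardinals as the paper does. This is cosmetic.) The genuine gap is in the hardness half. Your gadget $H_a$ is a countable group whose isomorphism type ($\Z$ versus $\Z[1/2]$) is meant to record whether the $\Sigma^0_1$ event $W_{a,Y}\ne\emptyset$ ever occurs during a construction of length $\kappa$. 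To end up with $\Z[1/2]$ you must carry out all $\omega$ of the irreversible division steps $2g_{n+1}=g_n$; these happen at stages $t_0<t_1<\cdots$, and since $\cf(\kappa)>\omega$ their supremum $t^*$ is below $\kappa$. By stage $t^*$ the gadget is fully committed: once all divisions are imposed, $H_a$ contains a nonzero element divisible by every power of $2$, and no extension of such a group is free, so nothing can be repaired if $W_{a,Y}$ first fires at a stage beyond $t^*$. The only escape would be to schedule the $n\tth$ division at stages $t_n$ with $\sup_n t_n=\kappa$, and no such $\omega$-sequence exists. So the gadget tracks $W_{a,Y}$ correctly only when $\kappa=\omega$; at a weakly compact (hence uncountable regular) $\kappa$ it decides non-freeness from a bounded portion of the enumeration, and the reduction fails.

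This is precisely why the paper proves something weaker and easier at this point: it reduces a $\Pi^0_1$ set via gadgets $G_\alpha$ that are free if and only if a \emph{decidable} (bounded-quantifier) condition $\psi(\alpha)$ holds --- no waiting is required, since $\psi(\alpha)$ is settled at a bounded stage --- and then relativises the entire reduction to $\emptyset'$, so the reducing functional is $\emptyset'$-computable rather than $\kappa$-computable (which is what the paper's convention for completeness relative to an oracle asks for). Coding a genuinely $\Sigma^0_1$ event into freeness at an uncountable cardinal requires non-freeness to be produced by cofinally many small ``twists'' rather than by $\omega$ many divisions; that is the role of \cref{prop:twisting_lemma}, and even then (\cref{prop:weakly_compact:Pi02_complete}) the paper confines the twisting to a single interval $[\alpha,\alpha^+)$, using $L_{\alpha^+}\prec_{\Sigma_1}L_\kappa$ to bound the $\Sigma^0_1$ witness --- a device available for index sets but not for an arbitrary $\Sigma^0_1(Y)$ predicate on $2^\kappa$. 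To repair your proof, replace the $\Pi^0_2$ reduction by the paper's relativised $\Pi^0_1$ reduction.
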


\begin{proof}
	First we show that freeness is indeed $\Pi^0_1(\emptyset')$. The point is that if $\lambda<\kappa$ is a cardinal and $H\in L_\lambda$ is a subgroup of~$G$, then~$H$ is free if and only if~$H$ has a basis in~$L_\lambda$. So \cref{prop:weak_compact_characterisation_of_freeness} implies that~$G$ is free if and only if for all cardinals~$\lambda$, $L_\lambda$ sees that every $\lambda$-finite subgroup of~$G$ is free. The set of cardinals is $\kappa$-computable from (indeed $\kappa$-equi-computable with) the complete $\Sigma^0_1(L_\kappa)$ set~$\emptyset'$. 

	\smallskip

	For completeness, we first observe that the collection of free abelian groups on~$\kappa$ is $\Delta^0_1$-hard; this only requires fixing two groups, one free and one not. Now let $A\subseteq 2^\kappa$ be $\Pi^0_1$; say $Y\in A$ if and only if $(L_\kappa,Y)\models \forall \alpha\,\,\psi(\alpha)$, for some formula~$\psi$ with bounded quantifiers. Then uniformly in~$Y$ we build groups~$G_\alpha$, for $\alpha<\kappa$, such that~$G_\alpha$ is free if and only if $(L_\kappa,Y)\models \psi(\alpha)$; and let~$G = \bigoplus G_\alpha$. This construction of course relativises to any oracle. 
\end{proof}

The following completes the proof of \cref{thm:identifying_free_groups}.

\begin{proposition} \label{prop:weakly_compact:Pi02_complete}
	Suppose that $\kappa$ is inaccessible and $\Pi^1_1$-indescribable. Then the index-set of the computable free abelian groups on~$\kappa$ is $\Pi^0_2$-complete. 
\end{proposition}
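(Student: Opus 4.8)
The statement has two halves, and the upper bound is already in hand. Membership of the index-set in $\Pi^0_2$ was recorded right after \cref{prop:weak_compact_characterisation_of_freeness}: since $\kappa$ is $\Pi^1_1$-indescribable, ``$G$ is free'' is equivalent to ``every $\kappa$-finite subgroup of $G$ is free'', which is $\Pi^0_2$ (a universal quantifier over $\kappa$-finite subgroups applied to the $\Sigma^0_1$ matrix ``has a basis''), while ``the index $e$ names a total group operation'' is $\Pi^0_2$ by totality. So the only thing left to prove is $\Pi^0_2$-hardness.

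For hardness the plan is to reduce the totality problem $\mathrm{Tot} = \{e : \varphi_e \text{ is total}\}$, which is $\Pi^0_2$-complete in $\kappa$-computability by the usual argument (totality is $\forall\alpha\,\exists s\,\varphi_e(\alpha)\!\converge[s]$, and an arbitrary $\Pi^0_2$ predicate $\forall\beta\,\exists\gamma\,R(\beta,\gamma)$ reduces to it by uniformly searching for a $\gamma$). The guiding observation is that the entire increase in complexity from the collection-of-groups version (which is only $\Pi^0_1(\emptyset')$ by \cref{prop:weakly_compact:free_is_Pi01}) to the index-set version is caused by the totality clause hidden inside ``being an index of a computable group''. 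I therefore code totality rather than freeness, arranging that whenever the coded operation is total, the group produced is a fixed free group.

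Concretely, fix once and for all a $\kappa$-computable copy $\mathbb F$ of $\Z^\kappa$, with universe $\kappa$ and $\kappa$-computable operation. Given $e$, I use the ($\kappa$-computable) $s$-$m$-$n$ theorem to define $g(e)$ to be an index for the partial operation that, asked for the product of the elements coded by $\alpha$ and $\beta$, first waits for $\varphi_e(\max(\alpha,\beta))\!\converge$ and only then returns the value prescribed by $\mathbb F$. If $\varphi_e$ is total, then every query eventually converges, so $g(e)$ names exactly the total operation of $\mathbb F$; hence $g(e)$ is an index of a (free) computable group. If $\varphi_e(\delta)\!\diverge$ for some $\delta$, then the operation named by $g(e)$ is not total, so $g(e)$ is not an index of a computable group at all, and a fortiori not of a free one. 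Thus $g(e)$ belongs to the index-set of computable free groups if and only if $e\in\mathrm{Tot}$, and $g$ is $\kappa$-computable, which is the desired many-one reduction.

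The one point requiring care here is conceptual rather than computational. It is tempting to try to encode the outer universal quantifier of a $\Pi^0_2$ predicate into freeness directly, writing $G = \bigoplus_\beta H_\beta$ with $H_\beta$ free exactly when some $\Sigma^0_1$ event fires; but this cannot work at a weakly compact $\kappa$. By the local characterisation of \cref{prop:weak_compact_characterisation_of_freeness}, any non-freeness of $G$ must be witnessed by a $\kappa$-finite subgroup, so a non-free $H_\beta$ would have to become non-free already at some bounded stage and therefore could not wait for a witness that may appear arbitrarily late. Channelling the hardness through totality sidesteps this obstruction entirely; weak compactness is used only for the (already established) $\Pi^0_2$ upper bound.
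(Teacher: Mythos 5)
Your reduction through $\mathrm{Tot}$ is technically a many-one reduction of a $\Pi^0_2$-complete set to the literal index set, but it exploits only the totality clause and never touches freeness: for $e\notin\mathrm{Tot}$, the index $g(e)$ is not an index of a group at all. The same argument would prove ``$\Pi^0_2$-completeness'' of the index set of \emph{any} non-empty class of computable structures whose index set happens to be $\Pi^0_2$, so it carries no information about free abelian groups. The convention the paper works under --- explicit in \cref{prop:weakly_compact:free_is_Pi01}, where completeness is stated \emph{in the collection of groups}, and implicit in its proof of \cref{prop:weakly_compact:Pi02_complete} --- is that the hardness reduction must land inside the indices of total $\kappa$-computable groups, i.e.\ it must output a genuine computable group that is free if and only if the given $\Pi^0_2$ fact holds. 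Your reduction does not establish that, so it misses the actual content of the proposition.

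The deeper problem is your closing paragraph: the obstruction you describe is not real, and the paper's proof does exactly what you claim cannot be done. Via the $\bigoplus_\beta$ trick from \cref{prop:weakly_compact:free_is_Pi01}, it suffices to prove $\Sigma^0_1$-hardness: uniformly in $\alpha$, build a total $\kappa$-computable group $G(\alpha)$ that is free iff $\alpha\in A$ for a fixed $\kappa$-c.e.\ set $A$. The point you are missing is that a $\Sigma^0_1$ witness for ``$\alpha\in A$'' cannot appear arbitrarily late on the relevant scale: since $L_{\alpha^+}\prec_{\Sigma_1}L_\kappa$, we have $\alpha\in A$ iff $L_{\alpha^+}\models\alpha\in A$, so any witness appears below $\alpha^+$. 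The paper therefore runs the twisting construction of \cref{thm:identifying_freeness-successor} along the interval $[\alpha,\alpha^+]$, twisting at each $\beta\in E$ for as long as $\alpha$ has not been seen to enter $A$, and padding with copies of $\Z^s$ to keep the resulting group total and $\kappa$-computable. Every ordinal in $(\alpha,\alpha^+)$ is singular, so $E$ does not reflect there and each intermediate group is free; $E$ is stationary in $\alpha^+$, so if $\alpha\notin A$ the twisting never stops and $H_{\alpha^+}$ --- a $\kappa$-finite subgroup, exactly the kind of witness \cref{prop:weak_compact_characterisation_of_freeness} requires --- fails to be free, while if $\alpha\in A$ the twisting stops at a bounded stage and the detachment set contains a club of $\alpha^+$, so $G(\alpha)$ is free. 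You should replace your reduction with this construction; as it stands your proof establishes only a degenerate form of the statement and rests on an incorrect impossibility claim.
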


\begin{proof}
	We have already observed that it is $\Pi^0_2$. We prove hardness. The argument for \cref{prop:weakly_compact:free_is_Pi01} shows that it is sufficient to prove $\Sigma^0_1$-hardness. 

	\smallskip

	Let~$A$ be a $\kappa$-c.e.\ set; we describe a procedure yielding, given $\alpha<\kappa$, a $\kappa$-computable group~$G = G(\alpha)$ such that $G(\alpha)$ is free if and only if $\alpha\in A$. 

	The idea is to follow the construction of the proof of theorem \cref{thm:identifying_freeness-successor} up to the next cardinal~$\alpha^+$ (the least cardinal~$\lambda$ such that $\alpha<\lambda$). We twist along~$E$ (\cref{def:class_E}) as long as we don't see~$\alpha$ enter~$A$. The point is that $\alpha \in A$ if and only if $L_{\alpha^+} \models \alpha\in A$, and that~$E$ is stationary in~$\alpha^+$ but not between~$\alpha$ and~$\alpha^+$. So $\alpha\in A$ if and only if and only if at some point below~$\alpha^+$ we stop twisting altogether. Once we get to~$\alpha^+$ we cannot continue the construction. Of course, effectively, we don't know that we reached~$\alpha^+$, so we keep waiting to tell whether it is in~$E$ or not; to prevent us from producing a partial group, on the side we keep building a copy of $\Z^\kappa$ to add to our group. 

	\smallskip

	Here are the details more formally. Fix a $\kappa$-effective enumeration~$\seq{A_s}$ of~$A$; $A_s$ is the set of $x<s$ such that $J_s$ sees that $x\in A$. For any cardinal~$\lambda$, $L_\lambda \prec_{\Sigma_1} L_\kappa$, so for any cardinal $\lambda$, $A_\lambda = A\cap \lambda$. 

	Fix $\alpha<\kappa$. Computably we build an increasing and continuous sequence of groups $\seq{H_{\beta}}_{\beta\in [\alpha,\alpha^+]}$ and a continuous and non-decreasing function $f\colon [\alpha,\kappa)\to \alpha^+ +1$. We then let $G_s =  H_{f(s)}\oplus \Z^s$ for all $s\in [\alpha,\kappa]$. This is done so that the sequence $\seq{G_s}$ is increasing, continuous and~$\kappa$-computable, so $G = G_\kappa$ is a $\kappa$-computable group. At every stage we increase~$f$ by at most one, so for all $t\in [\alpha,\kappa]$, the range of $f\rest{[\alpha,t)}$ is an initial segment of $[\alpha,\alpha^+]$; so to define the groups~$H_\beta$ we define the group~$H_{f(t)}$ whenever we increase~$f$. 

	We start with $H_\alpha$ being the trivial group, and $f(\alpha)=\alpha$. Now let $t\in (\alpha,\kappa]$, and suppose that $f(s)$ and $H_{f(s)}$ have been defined for all $s\in [\alpha,t)$. Now there are several options. 

	\medskip

	\noindent\textit{Case 1: $t$ is a limit ordinal.} We let $f(t) = \sup_{s\in [\alpha,t)} f(s)$. If~$f$ is constant on a final segment of~$t$ then~$H_{f(t)}$ is already defined. Otherwise we let $H_{f(t)} = \bigcup_{s\in [\alpha,t)} H_{f(s)}$. 

	\smallskip

	In the other cases, $t$ is a successor ordinal; let $\beta = f(t-1)$. 

	\smallskip

	\noindent\textit{Case 2: $\beta$ is a successor ordinal.} We let $f(t) = \beta+1$ and $H_{\beta+1} = H_\beta \oplus \Z$.
	
	\smallskip

	\noindent\textit{Case 3: $\beta$ is a limit ordinal and $t<\sing{\beta}$.} (Of course this includes the case that~$\beta$ is a regular cardinal, which will be $\alpha^+$). We let $f(t) = \beta$.
	
	\smallskip

	\noindent\textit{Case 4: $\beta$ is a limit ordinal and $t=\sing{\beta}$.} We let $f(t) = \beta+1$. In this case, by induction, $H_\beta$ is free; we search for a basis and find it. Also by induction, $\Detach{\seq{{H_{\gamma}}}_{\gamma\in [\alpha,\beta)}}$ contains $\gamma\setminus E$, and so contains a club. 

	If $\beta\in E$ and $\alpha\notin A_t$ then we twist: we find a sequence $\seq{\beta_n}$ cofinal in~$\beta$ and disjoint from~$E$, and let $H_{\beta+1} = \twist({\seq{H_{\beta_n}}})$. 

	If $\beta\notin E$, or $\alpha\in A_t$, we let $H_{\beta+1} = H_\beta \oplus \Z$. 
	
	\smallskip

	This concludes the construction. By induction we can see that $\range f= [\alpha,\alpha^+]$. By induction we see that for all $\beta\in [\alpha,\alpha^+)$, $H_\beta$ is free, and that if $\alpha\in A$ then $\Detach{\seq{H_\beta}}$ contains a final segment of~$\alpha^+$, and otherwise equals $[\alpha,\alpha^+)\setminus E$, which does not contain a club. Hence $\alpha\in A$ if and only if~$H_{\alpha^+}$ is free if and only if~$G$ is free. 
\end{proof}

\begin{remark} \label{rmk:low_oracles}
	What about $\Pi^0_2(B)$-completeness for oracles~$B$ which do not compute~$\emptyset'$? We do not know much, but we can show that if~$B$ is low and~$\kappa$ is inaccessible and $\Pi^1_1$-indescribable then  the index set of the $B$-computable free abelian groups is $\Pi^0_2(B)$-complete in a strong sense: there is a $\kappa$-computable (not merely computable in~$B$) function~$f$ which reduces the complete $\Pi^0_2(B)$-set to the set of $\kappa$-computable (not just~$B$-computable) free abelian groups. 

	As above it suffices to prove $\Sigma^0_1(B)$-hardness. We sketch the argument. Let~$\alpha<\kappa$; we effectively build a $\kappa$-computable group~$G$ which is free if and only if $\alpha\in B'$. Fix a $\kappa$-computable approximation $\seq{B'_s}_{s<\kappa}$ for~$B'$. 

	We combine ingredients from previous constructions. We define an increasing and continuous sequence $\seq{H_\beta}_{\beta\ge \alpha}$, and along with it an approximation to a filtration~$\seq{G_{\beta}}_{\beta\ge \alpha}$ as in the proof of \cref{thm:identifying_inaccessible_free_groups}. As this is a sketch we ignore this approximation and discuss the final result $\seq{G_\beta}$. This is done so that $G_\beta$ is twisted inside~$G_{\beta+1}$ if and only if $\beta\in E$, we see that~$G_\beta$ is free, and $\alpha\notin B'_s$ for all~$s$ in some final segment of~$\beta$. As long as these conditions do not hold we keep ``puffing up'' the group with copies of~$\Int$ so that in the end we do get a $\kappa$-computable group; see for example the proof of \cref{thm:coding_zero_double} below. 

	Why does this work? Suppose first that $\alpha\notin B'$. We will show that for some~$\lambda<\kappa$, $G_\lambda$ is not free. Fix some~$\beta$ such that $\alpha\notin B'_s$ for all $s\ge \beta$, and assume that~$G_\beta$ is free. Then we twist at every $\gamma\in [\beta,\beta^+) \cap E$, which shows that $G_{\beta^+}$ is not free. 

	Suppose that $\alpha\in B'$. Since $\beta\in B'_s$ for all~$s$ in a final segment of~$\kappa$, eventually we stop twisting; we just need to show that each~$G_\beta$ is free, that is, the construction does not die prematurely. Since we only twist along~$E$, the first non-free group could only appear at regular cardinal stages~$\lambda<\kappa$. Fix such~$\lambda$. To show (inductively) that~$G_\lambda$ is free, we consider the set~$C$ of~$\beta<\kappa$ such that cofinally in~$\beta$ we see stages~$s$ such that $\alpha\in B'_s$. At no stage~$s\in C$ do we twist. The set~$C$ is certainly closed, and $C\cap \lambda$ is cofinal in~$\lambda$ because it is $\kappa$-computable (with parameter smaller than~$\lambda$) and~$C$ is cofinal in~$\kappa$.
\end{remark}


\section{Coding into bases of free groups} 
\label{sec:coding_into_bases}

\Cref{cor:no_upper_bound_for_bases} says that if~$\kappa$ is a successor cardinal then no reasonable oracle suffices to compute a basis for every computable free abelian group. The situation for inaccessible cardinals remains unclear. In this section we tackle the other direction: what can be coded into all bases of some free abelian group? That is, for which sets $D\in 2^\kappa$ can we find a $\kappa$-computable free abelian group, every basis of which $\kappa$-computes~$D$? This is the content of \cref{thm:coding_into_bases}, which we prove in this section. In brief, our results say that:

\begin{itemize}
	\item $\emptyset'$ can always be coded;
	\item an upper bound on the sets that can be coded is the degree of $\Detach{G}$, which is always $\emptyset''$-computable, but sometimes $\emptyset'$-computable;
	\item in many cases, this upper bound can be realised. 
\end{itemize}

\subsection{The limits of coding} 
\label{sub:the_limits_of_coding}

Computing bases of a free group is equivalent to computing clubs through the detachment set. The following is an effective version of \cref{prop:club_and_freeness}. The proof is the same. 

\begin{lemma} \label{lem:computing_bases_and_clubs}
	Let~$\kappa$ be regular and uncountable; let~$G$ be a $\kappa$-computable free abelian group, and let $\bar G = \seq{G_{\alpha}}_{\alpha<\kappa}$ be a $\kappa$-computable filtration of~$G$. 

	The collection of bases of~$G$ and the collection of club subsets of~$\Detach{\bar G}$ are $\kappa$-Medvedev equivalent. That is, there are partial $\kappa$-computable functions $f,g\colon 2^\kappa\to 2^\kappa$ such that for every basis~$B$ of~$G$, $f(B)$ is a club through~$\Detach{\bar G}$; and for every club subset~$C$ of~$\Detach{\bar G}$, $g(C)$ is a basis of~$G$. 
\end{lemma}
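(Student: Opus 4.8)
The plan is to make the two constructions appearing in the proof of \cref{prop:club_and_freeness} uniform and $\kappa$-computable in their inputs; since the universe of~$G$ is~$\kappa$, I treat bases and clubs as subsets of~$\kappa$. For the map~$f$ (basis to club), given a basis~$B$ of~$G$ I would output
\[
	f(B) = C_B = \left\{ \alpha<\kappa \,:\, \Span(B\cap \alpha) = G_\alpha \right\}.
\]
Because $\bar G$ is $\kappa$-computable and both $\Span(B\cap \alpha)$ and $G_\alpha$ are $\kappa$-finite, the predicate ``$\Span(B\cap \alpha) = G_\alpha$'' is $\kappa$-computable from~$B$, uniformly in~$B$; so~$f$ is (a total) partial $\kappa$-computable function. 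The remaining work is to check that whenever~$B$ is a basis, $C_B$ is a club contained in $\Detach{\bar G}$. Closure is immediate from continuity of~$\bar G$ together with the fact that any element of $\Span(B\cap \alpha)$ uses only finitely many members of~$B$; unboundedness follows from the usual closure argument, alternately absorbing the finitely many basis elements needed to span generators of~$G_\beta$ and the elements of~$G$ needed to contain $\Span(B\cap \beta)$, and taking a limit. Finally, for $\alpha\in C_B$ the group $G_\alpha = \Span(B\cap \alpha)$ is a direct summand of~$G$ (with complement $\Span(B\setminus (B\cap\alpha))$), so by \cref{prop:detachment_is_forever} it detaches in every~$G_\beta$ with $\beta>\alpha$; hence $\alpha\in \Detach{\bar G}$.

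For the map~$g$ (club to basis), given a club $C\subseteq \Detach{\bar G}$ I would follow the proof of \cref{prop:club_and_freeness}(1) verbatim, reading off effectivity. First replace~$C$ by $C\cup\{0\}$ (legitimate since $G_0$ is trivial, so $0\in \Detach{\bar G}$), so that $0\in C$. Enumerating~$C$ in increasing order, for each $\alpha\in C$ with successor $\alpha' = \min (C\setminus(\alpha+1))$ in~$C$, search for a $\kappa$-finite pair $(H_\alpha, B_\alpha)$ such that $G_{\alpha'} = G_\alpha\oplus H_\alpha$ and $B_\alpha$ is a basis of~$H_\alpha$; then set $g(C) = \bigcup_{\alpha\in C} B_\alpha$. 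Since $\alpha\in \Detach{\bar G}$ we have $G_\alpha\div G_{\alpha'}$, so such a pair exists, and since identifying a complement and identifying a basis of a $\kappa$-finite free group are both $\kappa$-computable (as already used in the proof of \cref{thm:identifying_freeness-successor}), each search halts. The whole assignment is then $\kappa$-computable from~$C$ by recursion. That $\bigcup_{\alpha} B_\alpha$ is a basis follows, as in the cited proof, from the decomposition $G = \bigoplus_{\alpha\in C} H_\alpha$.

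The span and complement comparisons, and the closure argument for unboundedness, are routine. The one point deserving care, and which I expect to be the main obstacle, is the termination of the unbounded searches defining~$g$: one must invoke the hypothesis $C\subseteq \Detach{\bar G}$ at \emph{each} step to guarantee that the sought complement~$H_\alpha$ and its basis~$B_\alpha$ exist, so that the $\kappa$-computable search does not diverge. Correctness then reduces to verifying that assembling the local bases~$B_\alpha$ over all of~$C$ — in particular, propagating the direct-sum decomposition $G_\lambda = \bigoplus_{\alpha\in C\cap\lambda} H_\alpha$ through limit points~$\lambda$ of~$C$ using continuity of~$\bar G$ — yields $G = \bigoplus_{\alpha\in C} H_\alpha$ and hence a genuine basis. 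Everything else is bookkeeping identical to \cref{prop:club_and_freeness}.
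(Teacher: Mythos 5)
Your proposal is correct and matches the paper's intent exactly: the paper simply states that the proof is an effectivisation of \cref{prop:club_and_freeness}, and you carry out precisely that effectivisation, using $f(B)=\{\alpha : \Span(B\cap\alpha)=G_\alpha\}$ for one direction and the searched-for complements $H_\alpha$ with their $\kappa$-finite bases for the other. The points you flag (termination of the searches via $C\subseteq\Detach{\bar G}$ and freeness of each $G_{\alpha'}$, and $\kappa$-finiteness of spans via admissibility) are exactly the ones that make the uniform version go through.
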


The detachment set is the limit of possible coding into bases. 

\begin{theorem} \label{thm:limits_of_detachment}
	Let~$\kappa$ be regular and uncountable, and let~$G$ be a $\kappa$-computable group. For any $X\in 2^\kappa$ which is not $\kappa$-computable from~$\Detach{G}$, there is a basis of~$G$ which does not $\kappa$-compute~$X$. 
\end{theorem}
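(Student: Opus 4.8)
The plan is to reduce the statement to the construction of a single suitably generic club through the detachment set, and then convert that club into a basis via \cref{lem:computing_bases_and_clubs}. We may assume $G$ is free (otherwise there is no basis to produce); then by \cref{prop:club_and_freeness}(2) the set $D = \Detach{G}$ contains a club $E$ of $\kappa$. It therefore suffices to construct a club $C \subseteq D$ with $X \nle_\kappa C$: for then the partial $\kappa$-computable function $g$ of \cref{lem:computing_bases_and_clubs} yields a basis $g(C)$ of $G$ with $g(C) \le_\kappa C$, so that $X \le_\kappa g(C)$ would give $X \le_\kappa C$, a contradiction. Hence $g(C)$ is the desired basis.

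I would build $C$ by a Kleene--Post/Sacks-style forcing adapted to $\kappa$. Conditions are elements $p \in 2^{<\kappa}$ coding closed bounded subsets of $D$, ordered by end-extension, and $C \in 2^\kappa$ is the union of an increasing continuous chain $\langle p_\xi \rangle_{\xi<\kappa}$. There are $\kappa$ requirements, one per stage: unboundedness requirements, met by extending the current condition to include the least element of $E$ above its sup; and, for each $\kappa$-computable functional $\Phi$, a requirement $N_\Phi$ asserting $\Phi(C) \ne X$. Weaving in $E$ serves two purposes: it keeps $C$ unbounded, and it guarantees that at limit stages the sup of the chain is a limit point of $E$, hence lies in $E \subseteq D$, so that $C$ is closed and $C \subseteq D$, i.e.\ a genuine club through $D$.

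To meet $N_\Phi$ from a current condition $p$, I ask whether there are a condition $q \supseteq p$ and an ordinal $\gamma$ with $\Phi(q;\gamma)\converge$ and $\Phi(q;\gamma) \ne X(\gamma)$. If so, I pass to such a $q$; by continuity of $\Phi$ this disagreement is preserved under all further extensions, so $\Phi(C;\gamma) \ne X(\gamma)$ and $N_\Phi$ is met. If not, I act trivially and argue that $N_\Phi$ holds automatically. In this ``no'' case every convergent value $\Phi(q;\gamma)$, taken over conditions $q \supseteq p$, equals $X(\gamma)$; thus the $D$-computable search for some condition $q \supseteq p$ with $\Phi(q;\gamma)\converge$ returns $X(\gamma)$ whenever it halts. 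Were $\Phi(C)$ total and equal to $X$, then for each $\gamma$ an initial segment of $C$ would be such a $q$, the search would halt for every $\gamma$, and we would obtain $X \le_\kappa D$, contradicting $X \nle_\kappa D$. Hence $\Phi(C)$ is not total, and again $\Phi(C) \ne X$.

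I expect the main obstacle to be the bookkeeping around closure rather than the computability core: one must check that interleaving $E$ keeps each $p_\xi$ a closed subset of $D$ and makes the final $C$ closed, and that using $E$ (which need not be $\kappa$-computable) as a construction resource is harmless. It is harmless precisely because the conclusion $X \nle_\kappa C$ depends only on the satisfaction of the requirements $N_\Phi$, a property of $C$ itself and not of how $C$ was built. The genuinely load-bearing step is the ``no'' case, where the hypothesis $X \nle_\kappa \Detach{G}$ is exactly what forbids $\Phi(C)$ from being a total $\kappa$-reduction computing $X$; this is where $\Detach{G}$ emerges as the sharp upper bound on what can be coded into a basis.
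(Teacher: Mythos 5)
Your proposal is correct and follows essentially the same route as the paper: forcing with closed bounded subsets of $\Detach{G}$ ordered by end-extension, using a fixed club inside $\Detach{G}$ to interleave elements and guarantee closure at limit stages (the paper phrases this as $\kappa$-strategic closure), and meeting each requirement $N_\Phi$ by the standard dichotomy in which the ``no'' case would yield $X\le_\kappa \Detach{G}$, contradicting the hypothesis. The final conversion of the generic club into a basis via \cref{lem:computing_bases_and_clubs} is likewise exactly the paper's intended use of that lemma.
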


\begin{proof}
The proof of \cref{thm:limits_of_detachment} uses effective forcing. It is a generalisation of the forcing notion used to shoot a club through a stationary subset of~$\w_1$ \cite{ShootingClub}. Of course working effectively we do not actually extend the universe, so we will use the fact that the detachment set does contain a club (as~$G$ is free). Fix a regular uncountable cardinal~$\kappa$ and a $\kappa$-computable free group~$G$. 

\smallskip

The notion of forcing~$\PP = \PP(G)$ we use is the collection of all closed and bounded subsets of~$\Detach{G}$. The ordering is by end-extension: $D$ extends~$C$ if $D\supseteq C$ and $D\cap {\max C+1} = C$. Note that~$\PP$ is $\kappa$-computable from~$\Detach{G}$. 

While~$\PP$ is not $\kappa$-closed, it satisfies a weaker form of closure which will still allow us to build a sufficiently generic filter in~$\kappa$ many steps. It is \emph{$\kappa$-strategically closed}. This means that playing against an opponent, we have a strategy to stay inside~$\PP$ when alternating extending conditions in plays of length $<\kappa$, as long as we get to play at limit stages. In detail, fix a club~$D\subseteq \Detach{G}$. For $C\in \PP$ let~$g(C) = C\cup \{\min D\setminus (\max C+1)\}$. That is, add to~$C$ the next element of~$D$ beyond $\max C$. If $\gamma<\kappa$ is a limit ordinal and $\seq{C_\alpha}_{\alpha<\gamma}$ is a sequence of extending conditions in~$\PP$ (if $\beta>\alpha$ then $C_\beta$ extends $C_\alpha$ in~$\PP$) such that for any even ordinal $\alpha<\gamma$, $C_{\alpha+1} = g(C_\alpha)$, then letting $C_{<\gamma} = \bigcup_{\alpha<\gamma} C_\alpha$, the condition $C_\gamma= C_{<\gamma} \cup \{\sup C_{<\gamma}\}$ is a condition in~$\PP$ and extends each~$C_\alpha$. The point of course is that $\sup C_{<\gamma}\in \Detach{G}$, as it is in~$D$. In this way we can (within~$L$) build a filter of~$\PP$ meeting any prescribed collection of~$\kappa$ many dense subsets of~$\PP$. 

Fix $X\nle_\kappa \Detach{G}$. Let~$\HH$ be a filter, sufficiently generic over~$X$; let $A = \bigcup \HH$. This is a closed subset of~$\Detach{G}$. One kind of dense set we meet ensures that $A$ is unbounded in~$\kappa$; we can always extend conditions beyond any point below~$\kappa$, as $\Detach{G}$ is unbounded. It remains to show that $X\nle_{\kappa} A$. The argument is similar to the one used for effective Cohen forcing: if $H$ is 1-generic over~$Y$ and~$Y$ is noncomputable then $Y\nle_\Tur H$; here we need~$\Detach{G}$ as a base to compute~$\PP$. Let~$\Phi$ be a $\kappa$-c.e.\ functional, and let $C_0\in \PP$. If there is some~$C\in \PP$ extending~$C_0$ such that $\Phi(C)\perp X$ we take such an extension.
Otherwise, we claim that $C_0$ already forces divergence: there is some $\beta < \kappa$ such that for all $C \in \PP$ extending~$C_0$, $\Phi(C,\beta)\diverge$.
For if not, then using~$\PP$ (and so using $\Detach{G}$) we can $\kappa$-compute~$X$ by ranging over extensions of~$C_0$ and applying~$\Phi$. 
\end{proof}


\subsection{Coding $\cero'$} 
\label{sub:coding_zero_jump}

It is not hard to encode $\emptyset'$. It is possible in all cases, including singular cardinals and~$\w$. 

\begin{theorem} \label{thm:coding_one_jump}
	Let~$\kappa$ be any infinite cardinal. There is a $\kappa$-computable free abelian group, every basis of which computes~$\emptyset'$.
\end{theorem}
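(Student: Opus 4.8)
The plan is to build a $\kappa$-computable free group $G$ as a continuous increasing union of free groups $\seq{G_\alpha}$, arranging that the detachment set $\Detach{G}$ encodes $\emptyset'$ in such a way that every basis of $G$ must decode $\emptyset'$. By \cref{lem:computing_bases_and_clubs}, every basis of $G$ computes a club through $\Detach{G}$, so it suffices to code $\emptyset'$ into the nonstationary structure of $\Detach{G}$ --- more concretely, to guarantee that from any club through the detachment set one can recover $\emptyset'$. The basic mechanism is the twist of \cref{prop:twisting_lemma}: at a stage where we wish to block detachment we apply $\twist$, and at a stage where we wish to permit detachment we simply add a free generator $\Z$, exactly as in the successor case of \cref{thm:identifying_freeness-successor}. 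The difference is that here we choose \emph{where} to twist according to the approximation $\seq{\emptyset'_s}$ rather than according to a reduction of a $\Sigma^1_1$ set.

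First I would fix a $\kappa$-computable enumeration $\seq{\emptyset'_s}_{s<\kappa}$ of $\emptyset'$, and reserve a $\kappa$-computable partition of the ordinals below $\kappa$ into blocks $\seq{I_n}_{n<\kappa}$, one block per potential element $n$ of $\emptyset'$. Within the block $I_n$ I would place a single ``coding location'' $\delta_n$ of cofinality $\w$ at which a twist can be performed (using an $\w$-sequence of previously-constructed detaching subgroups, as supplied by \cref{prop:twisting_lemma}), surrounded by ordinals at which we only add copies of $\Z$. The construction ensures $\delta_n \in \Detach{G}$ if and only if $n \notin \emptyset'$: we twist at $\delta_n$ precisely when we never see $n$ enter $\emptyset'$, and otherwise we let $G_{\delta_n+1} = G_{\delta_n}\oplus \Z$. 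Since $\emptyset'$ is merely c.e., the decision whether to twist at $\delta_n$ cannot be made in advance, so, as in \cref{prop:weakly_compact:Pi02_complete} and \cref{rmk:low_oracles}, I would keep ``puffing up'' the group with spare copies of $\Z$ while waiting, so that the final object is always a total $\kappa$-computable free group regardless of the outcome. Verifying $\Detach{G} = \{\,\delta_n : n\notin \emptyset'\,\} \cup (\text{everything else})$, with $\delta_n\notin\Detach{G}$ exactly when $n\in\emptyset'$, then reduces $\emptyset'$ to the detachment set.

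The coding direction then requires that a club through $\Detach{G}$ actually reveals $\emptyset'$, not merely that $\Detach{G}$ does. Here I would exploit that each $\delta_n$ has cofinality $\w$ and that the surrounding ordinals in $I_n$ are ``padding'' at which detachment always holds: given any club $C$ through $\Detach{G}$, membership of $\delta_n$ in $\Detach{G}$ can be read off by asking whether $C$ can pass continuously through $\delta_n$, i.e.\ by examining whether the limit point of $C$ sitting in block $I_n$ lands at $\delta_n$ or is forced to skip it. Concretely, since $\delta_n\in\Detach{G}$ iff $n\notin\emptyset'$, and since any club meets a closed cofinal-in-$\kappa$ set of block-limit-points, a $\kappa$-computable procedure (with oracle the basis, hence the club) can locate for each $n$ whether the coding location $\delta_n$ lies in the detachment set, thereby computing $\emptyset'$.

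The main obstacle I expect is the limit-stage bookkeeping that guarantees \emph{every} $G_\alpha$ along the way is free, so that the twist hypotheses of \cref{prop:twisting_lemma} remain available and the final group is genuinely free. Because $\emptyset'$ is only $\Sigma^0_1$, the set of stages at which we have committed to twisting is revealed gradually, and one must check --- using \cref{prop:club_and_freeness}(1) --- that at each limit ordinal the detachment set up to that point contains a club, so that freeness is preserved; the cleanest way to arrange this is to ensure the coding locations $\delta_n$ form a \emph{nonstationary} (indeed very sparse, one-per-block) set, so that twisting occurs on a nonstationary set and $\Detach{G}$ automatically contains a club at every limit level. The remaining care is to confirm, as noted in the excerpt, that the argument is uniform enough to cover $\kappa=\w$ and singular $\kappa$ as well, where the only facts used are the existence of the twist and the club characterisation of freeness, both of which hold without regularity assumptions.
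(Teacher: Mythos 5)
Your strategy --- coding $\emptyset'$ into the detachment set by twisting at designated locations $\delta_n$ and then decoding from a club --- has two gaps that I believe are fatal, and it is in any case far heavier machinery than this theorem needs. First, the timing of the twist. You propose to twist at a \emph{fixed} location $\delta_n$ ``precisely when we never see $n$ enter $\emptyset'$''. But at stage $\delta_n+1$ the construction must commit to either $G_{\delta_n}\oplus\Z$ or a twist, and ``$n$ never enters $\emptyset'$'' is a $\Pi^0_1$ condition on the entire run; puffing up with spare copies of $\Z$ lets you wait for an event to \emph{occur}, not for it to \emph{fail to occur}. The opposite polarity (twist when $n$ does enter) is $\Sigma^0_1$ but fails for the dual reason: $n$ may enter $\emptyset'$ at a stage long after $\delta_n$, by which time $G_{\delta_n}\div G_{\delta_n+1}$ has been irrevocably arranged --- this is exactly the difficulty that the re-indexing in the proof of \cref{thm:identifying_inaccessible_free_groups} and the machinery of \cref{cor:blackbox} exist to circumvent, and neither comes for free here. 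Second, the decoding. A club $C\subseteq\Detach{G}$ is under no obligation to contain or approach any particular point of $\Detach{G}$; since you deliberately make the coding locations nonstationary to preserve freeness at limits, a perfectly good club can avoid every $\delta_n$ outright, and then $C$ carries no information about which $\delta_n$ lie in the detachment set. The sparseness that rescues freeness is precisely what destroys decodability. (Compare \cref{thm:coding_zero_double}, where decoding from a club works only because detachment is a $\Sigma^0_1$ relation witnessed below club points and the target set is a specially chosen $\Pi^0_2$-complete set.) Finally, the theorem is claimed for \emph{every} infinite cardinal, including $\w$ and singular cardinals, where the club/filtration/detachment apparatus (\cref{lem:computing_bases_and_clubs}, \cref{prop:club_and_freeness}(2)) is unavailable or not robust; your closing remark that these facts ``hold without regularity assumptions'' is not correct as stated.

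The paper's actual proof avoids all of this with a local, finitary divisibility coding: start with free generators $b_\alpha$ for $\alpha<\kappa$, and when $\alpha$ is seen to enter $\emptyset'$, adjoin a new generator equal to $b_\alpha/2$. This trigger is $\Sigma^0_1$ and is acted on the moment it fires, so the group is $\kappa$-computable and manifestly free. Given any basis $B$, one computably finds a finite $B_\alpha\subseteq B$ with $b_\alpha\in\Span(B_\alpha)$; since $\Span(B_\alpha)$ is a pure subgroup (it is spanned by part of a basis), $2$ divides $b_\alpha$ in $G$ if and only if it divides $b_\alpha$ in $\Span(B_\alpha)$, which is decidable from the coefficients of $b_\alpha$ over $B_\alpha$. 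No clubs, no twisting, and the argument works verbatim for $\kappa=\w$ and for singular $\kappa$. I would encourage you to look for this kind of ``pointwise'' coding before reaching for the detachment-set machinery, which in this paper is reserved for coding sets strictly above $\emptyset'$.
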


\begin{proof}
Begin by constructing a free group on $\kappa$ generators $\{b_\alpha\}$ for $\alpha < \kappa$. If at stage~$s<\kappa$ we see~$\alpha$ entering~$\emptyset'$, at that stage we introduce a new generator equal to~$b_\alpha/2$.

Let~$B$ be a basis of the resulting group~$G$. For each~$\alpha$ there is a finite subset~$B_\alpha$ of~$B$ such that $b_\alpha\in \Span(B_\alpha)$, and such a set can be found computably from~$B$; note that the function $\alpha\mapsto b_\alpha$ is $\kappa$-computable. Because $B_\alpha$ is $P$-independent, $\alpha\in \emptyset'$ if and only if 2 divides $b_\alpha$ in $\Span(B_\alpha)$. Note that even in the case $\kappa=\w$ determining this is computable, looking at the coefficients of~$b_\alpha$ in terms of the generators in~$B_\alpha$. 
\end{proof}

A-priori, for any regular uncountable~$\kappa$, for any~$\kappa$-computable group~$G$, $\Detach{G}$ is $\emptyset''$-computable. \Cref{thm:limits_of_detachment} shows that if~$\kappa$ is a cardinal for which $\Detach{G}$ is $\emptyset'$-computable for every $\kappa$-computable free group~$G$, then \cref{thm:coding_one_jump} is optimal for this~$\kappa$. In this subsection we outline a number of cases in which this holds. 

\begin{proposition} \label{prop:computing_detachment_set_is_sometimes_easy}
	Suppose that~$\kappa$ is a regular uncountable cardinal which is not the successor of a non-weakly-compact regular uncountable cardinal. 
	Then for any $\kappa$-computable free group~$G$, $\Detach{G}$ is $\emptyset'$-computable. 
\end{proposition}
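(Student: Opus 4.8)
The plan is to show that $\Detach{G}$ is $\Delta^0_2$ (equivalently $\emptyset'$-computable) by combining the standard $\Pi^0_2$ upper bound with an \emph{$\emptyset'$-computable club through $\Detach{G}$}; producing the latter is the real work, and it is exactly here that the hypothesis on $\kappa$ is used.

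First I would record the monotonicity of detachment: by \cref{prop:detachment_is_forever}, whenever $\alpha \le \beta \le \gamma$ we have $G_\alpha \div G_\gamma \Rightarrow G_\alpha \div G_\beta$, since $G_\beta \in [G_\alpha,G_\gamma]$. Hence for fixed $\alpha$ the set $\{\beta > \alpha : G_\alpha \div G_\beta\}$ is an initial segment of $(\alpha,\kappa)$, so $\alpha \in \Detach{G}$ iff this set is unbounded. As the relation $G_\alpha \div G_\beta$ is $\kappa$-c.e., this already gives $\Detach{G}\in\Pi^0_2$, and $\emptyset'$ decides each instance $G_\alpha \div G_\beta$. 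Since $G$ is free and $\kappa$ is regular, \cref{prop:club_and_freeness}(2) yields a club $C \subseteq \Detach{G}$. For any such club and any $\alpha$, letting $\beta = \min(C\setminus(\alpha+1))$, transitivity of detachment together with monotonicity gives $\alpha \in \Detach{G} \iff G_\alpha \div G_\beta$: if $G_\alpha \div G_\beta$ then $G_\alpha \div G_\gamma$ for $\gamma\in(\alpha,\beta]$ by monotonicity and for $\gamma>\beta$ by composing with $G_\beta \div G_\gamma$ (valid as $\beta\in\Detach{G}$), while the converse is immediate. Thus $\Detach{G}\le_\kappa C\oplus\emptyset'$, and the whole proposition reduces to producing a club $C\subseteq\Detach{G}$ with $C\le_\kappa\emptyset'$.

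To build $C$ I would run a recursion of length $\kappa$, maintaining a continuous increasing sequence $\seq{c_\xi}$. At successor steps, given $c_\xi$, I use $\emptyset'$ to search for and choose $c_{\xi+1}>c_\xi$ with $G_{c_\xi}\div G_{c_{\xi+1}}$ (with bookkeeping to keep the sequence cofinal); note I cannot $\emptyset'$-verify outright that $c_{\xi+1}\in\Detach{G}$, since that is a $\Pi^0_2$ condition, so membership of my points in $\Detach{G}$ must be secured at the limit stages. The real content is therefore at a limit $\lambda^*$, where $c_{\lambda^*}=\sup_{\xi<\lambda^*}c_\xi$ and I must guarantee $c_{\lambda^*}\in\Detach{G}$, i.e.\ that $c_{\lambda^*}$ avoids the non-stationary ``bad set'' $S=\kappa\setminus\Detach{G}$ (non-stationary again by \cref{prop:club_and_freeness}). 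This is where the hypothesis enters: the obstructions to detachment are concentrated on the non-reflecting class $E$ of \cref{def:class_E}, and by Jensen's \cref{thm:square_really} $E$ does not reflect at any singular ordinal. In each allowed case---$\kappa$ inaccessible; $\kappa=\lambda^+$ with $\lambda$ singular (so that $E\cap\lambda$ is non-stationary); $\kappa=\aleph_1$ (no intermediate regular uncountable cardinal); and $\kappa=\lambda^+$ with $\lambda$ weakly compact (where $\Pi^1_1$-indescribability of $\lambda$ collapses the relevant reflection, as in \cref{prop:weak_compact_characterisation_of_freeness})---the bad points cannot accumulate stationarily below any limit I reach, so the bookkeeping can steer the limit points $c_{\lambda^*}$ clear of $S$ while keeping $C$ computable from $\emptyset'$.

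The main obstacle is precisely this limit step: certifying, with only $\emptyset'$, that the limit points of the club actually detach. I expect this to require the full strength of the non-reflection machinery (\cref{thm:square_really} together with the analysis of $F(B,\vphi)$ from \cref{def:the_Sigma_11_approximating_set}), and the dividing line it produces should match the hypothesis exactly. Indeed, in the excluded case---$\kappa=\lambda^+$ with $\lambda$ regular uncountable and not weakly compact---a stationary non-reflecting subset of $\lambda$ can be encoded so that detachment failures reflect to a stationary set of limit points; \cref{thm:coding_into_bases} then produces a free group all of whose bases compute $\emptyset''$, and by \cref{thm:limits_of_detachment} this forces $\emptyset''\le_\kappa\Detach{G}$, so no $\emptyset'$-computable club through $\Detach{G}$ can exist. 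The allowed cardinals are exactly those that rule this phenomenon out.
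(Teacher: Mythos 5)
Your opening reduction is fine: the set $\{\beta>\alpha : G_\alpha\div G_\beta\}$ is an initial segment by \cref{prop:detachment_is_forever}, detachment is transitive, and so $\Detach{G}\le_\kappa C\oplus\emptyset'$ for any club $C\subseteq\Detach{G}$. But the entire content of the proposition is then pushed into producing an $\emptyset'$-computable club through $\Detach{G}$, and your construction of that club has a genuine gap at exactly the point you flag. At a limit $\lambda^*$, the chain conditions $G_{c_\xi}\div G_{c_{\xi+1}}$ do give $G_{c_\xi}\div G_{c_{\lambda^*}}$ for all $\xi<\lambda^*$ (this is the argument of \cref{prop:club_and_freeness}(1)), but they give no control whatsoever over whether $G_{c_{\lambda^*}}$ detaches in \emph{later} groups --- this is precisely the Melnikov/Pontryagin phenomenon of \cref{prop:Melnikov_done_short}, and if $c_{\lambda^*}\notin\Detach{G}$ the subsequent searches can fail. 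Your proposed repair --- that ``the obstructions to detachment are concentrated on the non-reflecting class $E$'' --- is not justified and is in fact a misreading of the role of $E$: the class $E$ of \cref{def:class_E} and Jensen's \cref{thm:square_really} are used to \emph{build} groups with prescribed detachment sets in the hardness proofs; for an arbitrary $\kappa$-computable free group $G$ there is no reason for $\kappa\setminus\Detach{G}$ to sit inside $E$ or to inherit its non-reflection. (Non-stationarity of $\kappa\setminus\Detach{G}$ in $\kappa$ does not prevent it from being stationary below some $\alpha<\kappa$.) So the limit step is not secured, and no amount of bookkeeping computable from $\emptyset'$ alone is shown to secure it.

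The paper's proof goes a different and more direct route, and its ingredients are what is actually missing from your argument: in each allowed case one shows that the detachment relation itself is decidable from $\emptyset'$, because \emph{non}-detachment is witnessed locally. For $\kappa=\w_1$ and for $\kappa$ the successor of a weakly compact cardinal, freeness of groups of size $\kappa^-$ is first-order definable over $L_{\kappa^-}$ ($\Pi^0_3$ over $\w$, $\Pi^0_2$ over a weakly compact), so the set of $\kappa$-finite free groups --- and hence, via quotients and \cref{lem:computing_freeness_and_detachment}, the relation $G_\alpha\div G_\beta$ --- is $\kappa$-computable, making $\Detach{G}$ outright $\Pi^0_1$. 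For $\kappa$ the successor of a singular cardinal the same conclusion follows from Shelah's singular compactness theorem (\cref{prop:computing_freeness_at_successor_of_weakly_compact}). For $\kappa$ inaccessible, \cref{lem:detachment_in_subgroup_of_same_size} shows $H\ndiv G$ is witnessed by some $\kappa$-finite $K\supseteq H$ of size $|H|$, so after using $\emptyset'$ to find a regular $\lambda$ with $H\in L_\lambda$, the question becomes $\Sigma^0_1$ and $\emptyset'$ decides it. None of these case-specific facts (definability of $\kappa^-$-freeness, singular compactness, same-cardinality witnessing of non-detachment) appears in your proposal, and without them the statement does not follow.
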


Toward finding the complexity of~$\Detach{G}$, we investigate the complexity of detachment among $\kappa$-finite free groups. Fix a regular uncountable cardinal~$\kappa$. Recall that ``$\kappa$-finite'' just means ``an element of~$L_\kappa$''. We first observe that given a $\kappa$-finite group~$K$ and a subgroup~$H$, we can effectively find a $\kappa$-finite copy of~$K/H$. Using \cref{fact:detachment_and_freeness_of_quotient}, this implies:

\begin{lemma} \label{lem:computing_freeness_and_detachment}
	The collection of $\kappa$-finite free abelian groups is $\kappa$-computably equivalent to the collection of $\kappa$-finite pairs $(K,H)$ such that~$K$ is free and~$H$ is a subgroup of~$K$ which detaches in~$K$. 
\end{lemma}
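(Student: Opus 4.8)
The plan is to exhibit $\kappa$-computable reductions in both directions, using \cref{fact:detachment_and_freeness_of_quotient} together with the observation just recorded that from a $\kappa$-finite pair $(K,H)$ one can $\kappa$-computably produce a $\kappa$-finite copy of the quotient $K/H$.

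First I would reduce the collection of $\kappa$-finite free groups to the collection of detachment pairs. Given a $\kappa$-finite group $G$, consider the pair $(G,\{0\})$, where $\{0\}$ is the trivial subgroup. Since $G = \{0\}\oplus G$, the trivial subgroup always detaches in $G$, so $(G,\{0\})$ lies in the detachment-pair collection if and only if $G$ is free. The map $G\mapsto (G,\{0\})$ is $\kappa$-computable, so this is a $\kappa$-computable many-one reduction of freeness to the detachment-pair collection.

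Conversely, I would reduce detachment pairs to freeness. Given a $\kappa$-finite pair $(K,H)$, first check $\kappa$-computably that $H$ is a subgroup of $K$; if not, the pair is not in the collection. Assuming it is, use the quotient observation to form a $\kappa$-finite copy of $K/H$, and query the freeness oracle on both $K$ and $K/H$. By \cref{fact:detachment_and_freeness_of_quotient}, when $K$ is free we have $H\div K$ if and only if $K/H$ is free; and when $K$ is not free the pair is excluded outright. Hence $(K,H)$ belongs to the detachment-pair collection if and only if both $K$ and $K/H$ are free --- a Boolean combination of two freeness queries, computed uniformly in $(K,H)$.

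Since each collection is thus $\kappa$-computable relative to the other, they are $\kappa$-computably equivalent. The only non-routine ingredient is the effective formation of $K/H$ as a $\kappa$-finite group, which is precisely the observation made before the statement; everything else is a direct application of \cref{fact:detachment_and_freeness_of_quotient} and the definition of detachment, so I do not anticipate any real obstacle here.
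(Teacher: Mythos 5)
Your proof is correct and follows essentially the route the paper intends: the paper derives the lemma directly from the effective formation of a $\kappa$-finite copy of $K/H$ together with \cref{fact:detachment_and_freeness_of_quotient}, which is exactly your second reduction, while your first reduction via the pair $(G,\{0\})$ is the obvious converse direction the paper leaves implicit. No gaps.
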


Note that these sets are $\kappa$-c.e., and so are $\emptyset'$-computable. This implies that for any $\kappa$-computable free group~$G$, $\Detach{G}$ is $\Pi^0_2(L_\kappa)$, and so as promised, $\emptyset''$-computable. 

\smallskip

To prove \cref{prop:computing_detachment_set_is_sometimes_easy} we consider several cases, which together cover all cardinals to which the proposition applies:
\begin{enumerate}
	\item $\kappa$ is the successor of a weakly compact cardinal;
	\item $\kappa = \w_1$;
	\item $\kappa$ is the successor of a singular cardinal;
	\item $\kappa$ is inaccessible.	
\end{enumerate}

For cases~(1)--(3), the proposition follows immediately from the following:

\begin{proposition} \label{prop:computing_freeness_at_successor_of_weakly_compact}
	Suppose that~$\kappa$ falls under cases (1)--(3). 
	Then the collection of $\kappa$-finite free groups is $\kappa$-computable. 
\end{proposition}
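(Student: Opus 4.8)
The plan is to show that both the set of $\kappa$-finite free groups and its complement (within the $\kappa$-finite torsion-free groups) are $\kappa$-c.e. Freeness is already $\kappa$-c.e.: search for a basis, and any basis of a $\kappa$-finite group is itself $\kappa$-finite. So all the work lies in \emph{enumerating the non-free} $\kappa$-finite groups. Write $\mu = \kappa^-$ for the cardinal predecessor of $\kappa$, so that every $\kappa$-finite group has size at most $\mu$, and in cases (1)--(3) the cardinal $\mu$ is respectively weakly compact, $\w$, or singular.

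First I would dispose of the groups of size strictly below $\mu$ by a boundedness argument. Replacing $K$ by a $\kappa$-computable isomorphic copy with universe the cardinal $\nu = |K| < \mu$, all subsets of $K$ are subsets of $\nu$, and since $V=L$ we have $\mathcal P(\nu)\subseteq H(\nu^+)=L_{\nu^+}$ with $\nu^+\le\mu<\kappa$; in particular $L_{\nu^+}$ is itself an element of $L_\kappa$. Hence ``$K$ is free'' $\iff L_{\nu^+}\models$``$K$ has a basis'' and ``$K$ is not free'' $\iff L_{\nu^+}\models$``$K$ has no basis'', both being quantifications bounded by the set $L_{\nu^+}$, since every candidate basis already lies there. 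As the map $K\mapsto L_{\nu^+}$ is $\kappa$-computable (compute $\nu=|K|$, then $\nu^+$, then $L_{\nu^+}$ — the successor-cardinal function below a successor cardinal being $\kappa$-computable by the same fine-structure considerations used for $\alpha\mapsto\sing{\alpha}$), freeness restricted to groups of size $<\mu$ is outright $\kappa$-computable, so in particular non-freeness is $\kappa$-c.e.\ for these groups.

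It remains to enumerate the non-free $K$ of size exactly $\mu$, and here I would invoke compactness, which is the substantive point and where cases (1) and (3) genuinely enter. Passing $\kappa$-computably to an isomorphic copy with universe $\mu$, I would argue as follows. In case (1), apply \Cref{prop:weak_compact_characterisation_of_freeness} with the weakly compact cardinal $\mu$ in the role of ``$\kappa$'': since $\mu$ is inaccessible and $\Pi^1_1(K)$-indescribable, $K$ is free if and only if every subgroup of size $<\mu$ is free. In case (3), apply Shelah's singular compactness theorem \cite{Shelah:SingularCompactness}: since $\mu$ is singular, $K$ is free if and only if it is $<\mu$-free, i.e.\ every subgroup of size $<\mu$ is free. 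In both cases $K$ is non-free if and only if it has a subgroup $H$ of size $<\mu$ that is non-free; such $H$ are $\kappa$-finite, their non-freeness is $\kappa$-c.e.\ by the previous paragraph, and ``there is $H\le K$ with $|H|<\mu$ and $H$ non-free'' is an existential quantifier over $L_\kappa$, hence again $\kappa$-c.e. So non-freeness is $\kappa$-c.e.\ at size $\mu$ as well, settling cases (1) and (3).

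Case (2), $\kappa=\w_1$, I would handle directly and more cleanly: here $\mu=\w$, and by Pontryagin's criterion (\Cref{prop:extension_of_finite_P-independece}) freeness of a countable torsion-free group is $\Pi^0_3$ in the group — this is precisely the Downey--Melnikov bound of \cite{DowneyMelnikov:EffectivelyCategoricalGroups}. Any arithmetic property of a countable $K\in L_{\w_1}$ is $\Delta_0$ over $L_{\w_1}$ in the parameter $\w$, since all the number quantifiers are bounded by the set $\w\in L_{\w_1}$; hence freeness is $\w_1$-computable. The main obstacle throughout is the top-cardinal case of (1) and (3), and specifically marshalling the two compactness inputs: checking that \Cref{prop:weak_compact_characterisation_of_freeness} applies verbatim with $\mu$ in place of $\kappa$, and correctly invoking singular compactness when $\mu$ is singular. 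The boundedness step is routine once one notes $\mathcal P(\nu)\subseteq L_{\nu^+}$ with $\nu^+<\kappa$; the only mild care needed there is the $\kappa$-computability of the successor-cardinal function below the successor cardinal $\kappa$.
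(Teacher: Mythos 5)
Your proof is correct and follows essentially the same route as the paper's: weak compactness (via \cref{prop:weak_compact_characterisation_of_freeness}) in case (1), the Downey--Melnikov $\Pi^0_3$ bound in case (2), and Shelah's singular compactness in case (3), with freeness of subgroups of size below $\kappa^-$ decided by boundedness inside a $\kappa$-finite level of $L$. The only cosmetic differences are that the paper bumps smaller groups up to size $\kappa^-$ by adding a free summand $\Z^{\kappa^-}$ rather than treating them separately, and in case (1) it cites the already-established $\Pi^0_2(L_{\kappa^-})$ definability of freeness on $\kappa^-$ instead of re-deriving it from the indescribability characterisation.
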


\begin{proof}
	First, we consider cases~(1) and~(2). 

	Let~$H$ be a $\kappa$-finite (torsion free, abelian) group. Let~$\kappa^-$ be the cardinal predecessor of~$\kappa$. By adding a copy of $\Z^{\kappa^-}$ we may assume that $|H| = \kappa^{-}$. Effectively we can find a group~$K$ with universe~$\kappa^-$ which is isomorphic to~$H$. Now we use the fact that in both cases, the collection of free groups on~$\kappa^-$ is first-order definable over~$\kappa^-$. If $\kappa = \w_1$, then we know that the collection of free groups on~$\w$ is $\Pi^0_3$; by \cref{thm:identifying_free_groups}, if~$\kappa^-$ is weakly compact, then the collection of free groups on~$\kappa^-$ is $\Pi^0_2(L_\kappa)$. In both cases, whether $(L_{\kappa^-},K)$ satisfies this definition can be effectively computed within~$L_{\kappa}$.

	\medskip

Next we consider case~(3). The proof relies on Shelah's singular compactness theorem \cite{Shelah:SingularCompactness}, see also \cite{Eklof:ShelahSingularCompactness}. Let~$\kappa$ be the successor of a singular cardinal. Shelah's theorem says that (like in the weakly compact case), a group of size~$\kappa^-$ is free if and only if every subgroup of strictly smaller cardinality is free. 

Because the collection of $\kappa^-$-finite free groups is definable over $L_{\kappa^-}$ (it is $\kappa^-$-c.e.), it is $\kappa$-finite. We know that the collection of $\kappa$-finite free groups is $\kappa$-c.e., so it suffices to show it is also $\kappa$-co-c.e. For a $\kappa$-finite group~$K$, the collection of all $\kappa$-finite subgroups of~$K$ cardinality smaller than~$\kappa^-$ is $\kappa$-computable (uniformly in~$K$); for each such group~$H$, we can effectively find a $\kappa^-$-finite group~$\hat H$ isomorphic to~$H$, and then see whether it is free or not.
\end{proof}

We turn to case~(4). The following lemma
will be also useful later, when we discuss singular cardinals.    Recall that if~$H$ is a subgroup of a group~$G$ then we write $[H,G]$ to denote the collection of all subgroups $K\subseteq G$ such that $H\subseteq K$.

\begin{lemma} \label{lem:detachment_in_subgroup_of_same_size}
	Let~$G$ be a free abelian group, and let $H$ be a subgroup of~$G$. Then $H\div G$ if and only if $H\div K$ for all $K\in [H,G]$ such that $|K|=|H|$. 
\end{lemma}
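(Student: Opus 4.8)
The forward direction is immediate from \cref{prop:detachment_is_forever}: if $H$ detaches in $G$, then it detaches in every member of $[H,G]$, and in particular in those of size $|H|$. The content is the converse, and the plan is to reduce it to a \emph{single} application of the hypothesis by trapping $H$ inside a direct summand of $G$ of the same cardinality as $H$.

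Concretely, I would first dispose of the degenerate case: since $H$ is a subgroup of a free group it is itself free, so if $|H|$ is finite then $H$ is trivial and detaches trivially. Assume then that $\lambda = |H|$ is infinite. Fix a basis $B$ of $G$. Every element of $H$ has finite support with respect to $B$, so the set $B_0 \subseteq B$ of basis vectors occurring in the expansions of elements of $H$ is a union of $\lambda$ finite sets, whence $|B_0| \le \lambda$. Put $G_0 = \Span(B_0)$. Then $H \subseteq G_0$, and $G = G_0 \oplus G_1$ where $G_1 = \Span(B\setminus B_0)$, so $G_0$ is a direct summand of $G$. Moreover $\lambda = |H| \le |G_0| \le |B_0|\cdot\aleph_0 = \lambda$, so $|G_0| = |H|$.

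Next I would apply the hypothesis to $K = G_0$: since $G_0 \in [H,G]$ and $|G_0| = |H|$, it yields $H \div G_0$, say $G_0 = H \oplus C$. Combining with the splitting of $G$ gives $G = H \oplus (C \oplus G_1)$, i.e.\ $H \div G$, as required.

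The only real subtlety — and the step I would flag as the conceptual crux — is the choice of $G_0$. The hypothesis superficially resembles a $\lambda$-freeness condition (``small'' subgroups of the quotient $G/H$ are free), and such conditions do \emph{not} in general force freeness of the whole quotient (witness the standard $\aleph_1$-free, non-free groups). What rescues the argument is that we are allowed to test detachment on $K$ of size exactly $|H|$, combined with the fact that a subgroup of a free group of size $\lambda$ always lies inside a direct summand of size $\lambda$; this lets us absorb all of $H$ at once rather than building it up through a chain, so no limit-stage obstruction of the kind illustrated by \cref{exmp:Melnikov} can arise.
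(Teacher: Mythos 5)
Your proof is correct and follows essentially the same route as the paper's: both arguments trap $H$ inside the span of a subset of a basis of $G$ of cardinality $|H|$, which is automatically a direct summand, and then invoke the hypothesis on that summand (the paper phrases this contrapositively, you phrase it directly, but the decomposition is identical).
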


\begin{proof}
	Given \cref{prop:detachment_is_forever}, it suffices to show that if $H\ndiv G$ then there is some $K\in [H,G]$ with $|K|= |H|$ such that $H\ndiv K$. Let~$B$ be a basis of~$G$. Let~$C\subseteq B$ be a subset of size~$|H|$ such that $H\subseteq \Span(C)$; let $K = \Span(C)$. 
\end{proof}

\begin{proof}[Proof of \cref{prop:computing_detachment_set_is_sometimes_easy} in the inaccessible case]
	Suppose that~$\kappa$ is inaccessible. We use \cref{lem:detachment_in_subgroup_of_same_size}. We are given a $\kappa$-finite subgroup~$H$ of~$G$, and want to know whether $H\div G$ or not. First, we use~$\emptyset'$ to find a regular cardinal $\lambda<\kappa$ such that $H\in L_\lambda$. With parameter~$\lambda$ we can computably check, given a $\kappa$-finite subgroup~$K\in [H,G]$ of size smaller than~$\lambda$, whether~$H\div K$ or not: we search for an isomorphism $g$ from~$K$ to a $\lambda$-finite group $g[K]$, and ask whether $g[H]$ detaches in~$g[K]$; since both~$g[K]$ and~$g[H]$ are $\lambda$-finite, the search for this detachment is performed within $L_\lambda$, and so is bounded. 

	Hence, after finding~$\lambda$, we can ask~$\emptyset'$ the following~$\Sigma^0_1$ question, equivalent to~$H\ndiv G$: is there a $\kappa$-finite $K\in[ H,G]$ and an injective function~$g$ from~$K$ into some $\alpha<\lambda$ such that in~$L_\lambda$, $g[H]\ndiv g[K]$?
\end{proof}


\subsection{Coding $\cero''$} 
\label{sub:coding_0_double}

To finish the proof of \cref{thm:coding_into_bases}, we consider the case in which not only $\Detach{G}$ can be made to be equivalent to~$\emptyset''$, but we can code~$\emptyset''$ into all bases of a group; so again in this case our results are tight.

\begin{proposition} \label{prop:detachment_is_complete_at_most_successors}
	Suppose that~$\kappa$ is a successor of a regular uncountable cardinal which is not weakly compact. Then the collection of $\kappa$-finite free abelian groups is $\Sigma^0_1(L_\kappa)$-complete. 
\end{proposition}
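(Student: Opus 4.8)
The plan is to reduce the statement to the $\Sigma^1_1$-completeness of freeness one cardinal below. Write $\mu = \kappa^-$, a regular uncountable cardinal which is not weakly compact, so that $\kappa = \mu^+$ and $L_\kappa = L_{\mu^+}$. Membership is the easy half and was already noted: a $\kappa$-finite group is free if and only if it has a (necessarily $\kappa$-finite) basis, so searching for a basis shows that the collection of $\kappa$-finite free groups is $\Sigma^0_1(L_\kappa)$. For hardness I would exhibit, for an arbitrary $\Sigma^0_1(L_\kappa)$ set $B$, a $\kappa$-computable reduction of $B$ to the collection of $\kappa$-finite free groups, and I would arrange that all of its outputs are in fact groups of universe $\mu$.

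The engine is that freeness is already hard at the level below. If $\mu$ is a successor, then \cref{thm:identifying_freeness-successor} shows that the collection of free groups of universe $\mu$ is $\Sigma^1_1(L_\mu)$-complete. If $\mu$ is inaccessible, then, being non-weakly-compact, it is $\Pi^1_1(B_0)$-describable for some $B_0\in 2^\mu$ which we may take $<_L$-least and hence $\kappa$-computable, so \cref{thm:identifying_inaccessible_free_groups} shows that this collection is $\Sigma^1_1(B_0)$-complete. In either case the reducing functions are $\mu$-computable and send each input to a group of universe $\mu$, which is a $\kappa$-finite object; hence the very same maps are $\kappa$-computable and land inside the collection of $\kappa$-finite free groups. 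It therefore suffices to produce a $\kappa$-computable reduction of $B$ to a $\Sigma^1_1(L_\mu)$ set (a $\Sigma^1_1(B_0)$ set in the inaccessible case).

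This last reduction is the crux, and amounts to the coding fact that every $\Sigma^0_1(L_\kappa)$ predicate on $P(\mu)$ is $\Sigma^1_1$ over $L_\mu$; here I would use $V=L$. Code each input $x$ by a subset $Y_x\subseteq\mu$ that records, via the canonical $L$-coding, the ordinal $x<\kappa$ together with the relevant parameters; since every element of $L_\kappa=H(\kappa)$ has transitive closure of size $\le\mu$, the map $x\mapsto Y_x$ is $\kappa$-computable. Now ``$x\in B$'' is a $\Sigma_1$ assertion $\exists z\,\phi(z,Y_x)$ over $L_{\mu^+}$. Taking the $\Sigma_1$-Skolem hull of $\mu\cup\{\mu,Y_x\}$ in $L_{\mu^+}$ and collapsing (by Condensation, with $\mu$ and $Y_x$ fixed by the collapse), I get that $x\in B$ holds if and only if there is some $W\subseteq\mu$ coding a transitive model $N=(\mu,E_W)$ of $V=L+\mathrm{ZF}^-$ with $\mu^N=\mu$, containing $Y_x$, and satisfying $\exists z\,\phi(z,Y_x)$. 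The only non-$\Sigma^1_1$ ingredient is well-foundedness of $E_W$, which I would eliminate by additionally demanding that $N$ be pointwise $\Sigma_1$-definable from $\mu\cup\{Y_x\}$ and definably well-founded over $L_\mu$ (every $E_W$-progressive subset of $\mu$ that is $\Sigma_1(L_\mu)$-definable from $W$ and parameters in $\mu\cup\{Y_x\}$ is all of $\mu$). For pointwise-definable $N$ this forces genuine well-foundedness, while the real condensate, being the collapse of a hull of $\mu\cup\{\mu,Y_x\}$, is itself pointwise $\Sigma_1$-definable from $\mu\cup\{Y_x\}$ and so supplies such a $W$ in the forward direction. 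All of these clauses are first-order over $(L_\mu,W,Y_x)$, so the displayed condition is $\Sigma^1_1(L_\mu)$.

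The main obstacle is exactly this elimination of the $\Pi^1_1$ well-foundedness quantifier in the coding step: naively coding a witnessing level $L_\delta$ by a subset of $\mu$ leaves a $\Pi^1_1$ ``is well-founded'' clause and only yields a $\Sigma^1_2$ bound. Once the pointwise-definability-plus-definable-well-foundedness device is in place (or, equivalently, once one appeals to the standard identification under $V=L$ of $\Sigma_1$ over $H(\mu^+)$ with $\Sigma^1_1$ over $H(\mu)$), the three reductions compose into a single $\kappa$-computable map witnessing $\Sigma^0_1(L_\kappa)$-hardness, which together with membership completes the proof.
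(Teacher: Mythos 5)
Your overall strategy is the same as the paper's: membership by searching for a basis, and hardness by composing (a) a translation of $\Sigma^0_1(L_\kappa)$ predicates into $\Sigma^1_1$ predicates over $L_{\kappa^-}$ with (b) the $\Sigma^1_1$-completeness (with a fixed $\kappa$-finite parameter) of freeness for groups of universe $\kappa^-$, from \cref{thm:identifying_freeness-successor} or \cref{thm:identifying_inaccessible_free_groups}. The two arguments diverge only in step (a), at the elimination of the well-foundedness clause, and here your chosen device is both heavier than necessary and, as literally stated, not clearly sufficient. You restrict the ``definable well-foundedness'' scheme to $E_W$-progressive sets that are $\Sigma_1(L_\mu)$-definable and lean on pointwise definability of the condensate; but the natural progressive set one needs --- the well-founded part of $E_W$ --- is $\Pi_1$ over $(L_\mu,W)$ and not obviously $\Sigma_1$, and pointwise definability alone does not force well-foundedness (Skolem hulls of ill-founded models of $V=L+\mathrm{ZF}^-$ are pointwise definable), so the reverse direction of your equivalence is not closed as written. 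The fix is exactly the observation the paper isolates as the key point: since $\mu=\kappa^-$ is regular and \emph{uncountable}, every function $\omega\to\mu$ is bounded and hence an element of $L_\mu$, so ``$E_W$ has no infinite descending sequence'' is already a first-order statement over $(L_\mu,W)$ and can simply be conjoined to the matrix of the $\Sigma^1_1$ formula --- no definability scheme is needed. You do gesture at this in your closing parenthetical (the identification of $\Sigma_1$ over $H(\mu^+)$ with $\Sigma^1_1$ over $H(\mu)$ under $V=L$), and with that substitution your proof matches the paper's; the remaining cosmetic difference is that you code the parameter $\beta<\kappa$ by a subset $Y_x\subseteq\mu$ via the $L$-coding, where the paper codes it as a well-order $B$ of $\kappa^-$ and asks for an embedding onto an initial segment of the model, which amounts to the same thing.
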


\begin{proof}
	Let~$\kappa^-$ be the cardinal predecessor of~$\kappa$. The proposition follows from the fact that the collection of free groups with universe~$\kappa^-$ is $\mathbf{\Sigma}^1_1(L_{\kappa^-})$-complete, and that we can effectively translate $\Sigma^0_1$ questions about $L_{\kappa}$ into $\mathbf{\Sigma^1_1}$ questions about $L_{\kappa^{-}}$. This is not new but we give the details for completeness. The key is the regularity of $\kappa^->\w$, which makes well-foundedness a relatively simple question. 

	As a first step consider first $\Sigma^0_1(L_\kappa)$-questions with no parameters. Let~$\vphi$ be a $\Sigma^0_1$ formula. To find out if $L_\kappa\models \vphi$, we note that this happens if and only if $L_\alpha\models \vphi$ for some $\alpha < \kappa$. (Actually $\alpha<\w_1$, but we are doing this step as a warm-up, and this observation won't help later.) Then $L_\kappa\models \vphi$ if and only if there is some $A\subseteq (\kappa^-)^2$  such that $(\kappa^-,A)$ is a well-founded model of~$\ZF^- + (V=L) + \vphi$. Well foundedness is first-order definable in $(\kappa^-,A)$, as we only quantify over functions from $\w\to \kappa^-$; as $\kappa^-$ is regular, all of these are $\kappa^-$-finite; so this question is $\Sigma^1_1(L_{\kappa^-})$. 

	Now for the general case, we take a $\Sigma^0_1$ formula~$\vphi$ and a parameter $\beta<\kappa$. Effectively, in~$L_\kappa$, we can find a well-ordering~$B$ on~$\kappa^-$ isomorphic to~$\beta$. Our $\Sigma^1_1(B)$ question now asks for some relation~$A$ on~$\kappa^-$ and an embedding of~$(\kappa^-,B)$ into the initial segment of $(\kappa^-,A)$ determined by some $x\in \kappa^-$ such that $(\kappa^-,A)\models \ZF^-+ (V=L) + \vphi(x)$.

	To~$B$ we can add a fixed $C\subseteq \kappa^-$ such that the collection of free abelian groups is $\Sigma^1_1(C)$-complete, and so given~$\vphi(\beta)$ find a group~$G$ on~$\kappa^-$ which is free if and only if $L_\kappa\models \vphi(\beta)$. 
\end{proof}

\begin{corollary} \label{cor:blackbox}
	Suppose that~$\kappa$ is a successor of a regular cardinal which is not weakly compact. There is a partial $\kappa$-computable function which takes as input a $\kappa$-finite free abelian group~$H$ and a $\Sigma^0_1$ formula~$\vphi$ (with parameters in~$L_\kappa$) and outputs a $\kappa$-finite free abelian group~$\BlackBox{H}{\vphi}$ in which~$H$ detaches if and only if~$\vphi$ holds in~$L_\kappa$. 
\end{corollary}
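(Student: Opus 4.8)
The plan is to reduce everything to the detachment-completeness of \cref{prop:detachment_is_complete_at_most_successors} and then to transport the resulting obstruction onto the given group~$H$. First I would use \cref{prop:detachment_is_complete_at_most_successors} to obtain, $\kappa$-computably and uniformly from~$\vphi$, a $\kappa$-finite group~$Q$ that is free if and only if $L_\kappa\models\vphi$; by padding~$Q$ with free generators (which does not affect its freeness) I may assume $|Q|=\kappa^-$. I then present~$Q$ as a quotient of a free group: let~$P$ be free on a generating set of~$Q$, let $\pi\colon P\to Q$ be the induced surjection, and put $S=\ker\pi$. Then~$P$ is free with $|P|=\kappa^-$, and by \cref{fact:detachment_and_freeness_of_quotient} we have $S\div P$ if and only if $P/S\cong Q$ is free, i.e.\ if and only if~$\vphi$ holds. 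Adding a free summand of rank~$\kappa^-$ to both~$P$ and~$S$ (this changes neither the quotient nor the displayed equivalence) I may also arrange that~$S$ is free of rank exactly~$\kappa^-$. Thus the required $\Sigma^0_1$ condition has been realised as a single detachment fact $S\div P$; the same conclusion can be read off directly from \cref{lem:computing_freeness_and_detachment}.

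It remains to move this fact onto~$H$ itself. The construction is defined when $|H|=\kappa^-$, which is the only case needed below and the reason the function is merely partial: since $\kappa=(\kappa^-)^+$ and~$H$ is $\kappa$-finite, $\kappa^-$ is the largest possible size of~$H$, and in that case~$H$ and~$S$ are free abelian groups of the same rank~$\kappa^-$ and hence isomorphic. Fix a $\kappa$-computable isomorphism $\theta\colon H\to S$. I then let $\BlackBox{H}{\vphi}$ be a free group $G\supseteq H$ equipped with an isomorphism of pairs $(G,H)\cong(P,S)$ extending~$\theta$: concretely, $G$ is obtained by relabelling~$P$ so that its subgroup~$S$ is replaced by~$H$ through~$\theta^{-1}$, equivalently as the pushout of the inclusion $S\hookrightarrow P$ along the isomorphism $\theta^{-1}\colon S\to H$. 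Because $\theta^{-1}$ is an isomorphism, this pushout is isomorphic to~$P$ (with~$S$ replaced by its image~$H$), so~$G$ is free and $(G,H)\cong(P,S)$. Consequently $H\div G$ if and only if $S\div P$, i.e.\ if and only if~$\vphi$ holds. Every step---invoking \cref{prop:detachment_is_complete_at_most_successors}, forming the resolution $(P,S)$, locating the isomorphism~$\theta$, and assembling~$G$---is $\kappa$-computable uniformly in $(H,\vphi)$, so the resulting map is partial $\kappa$-computable.

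The main obstacle is precisely this transfer step: we must realise the \emph{given}~$H$, rather than some free group of our own choosing, as the non-detaching subgroup of a free group, with detachment equivalent to~$\vphi$. This forces a rank match between~$H$ and the obstruction subgroup~$S$. Since the witnesses produced by \cref{prop:detachment_is_complete_at_most_successors} have size~$\kappa^-$, so does~$S$, and therefore the construction applies exactly when $|H|=\kappa^-$; this is what makes the function partial, and it is the only case that arises in the applications, where~$H$ occurs as a member of a filtration of a group of size~$\kappa$. Once the ranks agree, freeness of the output is automatic, since identifying~$S$ with~$H$ merely relabels the free group~$P$.
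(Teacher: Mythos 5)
Your proposal is correct and follows essentially the same route as the paper: both invoke \cref{prop:detachment_is_complete_at_most_successors} to get a $\kappa$-finite group free iff $\vphi$ holds, realise it as the quotient of a free group of rank $\kappa^-$ by a free kernel of rank $\kappa^-$, identify that kernel with~$H$ by renaming, and conclude via \cref{fact:detachment_and_freeness_of_quotient}. Your explicit padding of the kernel and your remark that the rank-matching requirement $|H|=\kappa^-$ is the source of partiality are both consistent with (and slightly more careful than) the paper's phrasing.
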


\begin{proof}
	Given~$H$ and~$\vphi$, first use \cref{prop:detachment_is_complete_at_most_successors} to get a $\kappa$-finite group~$K$ (of size~$\kappa^-$) which is free if and only if $\vphi$ holds in~$L_\kappa$. Now the idea is to let~$G = \BlackBox{H}{\vphi}$ be a free extension of~$H$ such that $K\cong G/H$, and then refer to \cref{fact:detachment_and_freeness_of_quotient}. 

	Technically what we do is find a surjection~$f$ from some copy~$G$ of $\Z^{\kappa^-}$ onto~$K$, ensuring that the kernel of~$f$ has size $\kappa^-$; this can be achieved using the freeness of~$\Z^{\kappa^{-}}$. Since a subgroup of a free group is free, the kernel of~$f$ is isomorphic to~$\Z^{\kappa^{-}}$, and so to~$H$. Renaming the elements of~$G$ we can thus assume that $H = \ker f$. 
\end{proof}

\begin{theorem} \label{thm:coding_zero_double}
	Suppose that~$\kappa$ is a successor of a regular cardinal which is not weakly compact. There is a $\kappa$-computable free group, all bases of which compute~$\emptyset''$. 
\end{theorem}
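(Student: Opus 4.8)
The plan is to reduce the statement to a coding problem about clubs and then solve it by combining the BlackBox of \cref{cor:blackbox} with the twisting construction of \cref{thm:identifying_freeness-successor}. By \cref{lem:computing_bases_and_clubs}, every basis of a $\kappa$-computable free group~$G$ computes a club through~$\Detach{G}$, so it suffices to build a $\kappa$-computable free group~$G$, together with a filtration, such that \emph{every} club through~$\Detach{G}$ $\kappa$-computes~$\emptyset''$. Note that \cref{thm:limits_of_detachment} shows this is the best one could hope for, since $\Detach{G}$ is always $\emptyset''$-computable; thus the present theorem is exactly the matching lower bound, sharpening \cref{thm:coding_one_jump} from~$\emptyset'$ to~$\emptyset''$ in this case.

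The reason the second jump is available here is that $\kappa$ is the successor of a regular non-weakly-compact cardinal, so by \cref{prop:detachment_is_complete_at_most_successors} the detachment relation on $\kappa$-finite groups is $\Sigma^0_1(L_\kappa)$-complete, and hence $\Detach{G}$ can be arranged to be $\Pi^0_2(L_\kappa)$-complete. Concretely I would fix a $\Sigma^0_2(L_\kappa)$ definition $\emptyset'' = \{\alpha : \exists m\,\forall n\, R(\alpha,m,n)\}$ together with a $\kappa$-computable approximation to it, and aim to build~$G$ so that, running along any club $C\subseteq\Detach{G}$, the detachment certificates that~$C$ is forced to supply reveal, for each~$\alpha$, whether a witness~$m$ is eventually confirmed. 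The point is that merely certifying ``$\beta\in\Detach{G}$'' already resolves the $\Sigma^0_1(L_\kappa)$-complete level-detachment questions $G_\beta\div G_\gamma$ for all $\gamma>\beta$; this is a $\Pi^0_1(\emptyset')$ amount of information, and accumulating it along the cofinally many levels of~$C$ is exactly the extra jump that lets one read off~$\emptyset''$ rather than only~$\emptyset'$. The individual detachment facts would be installed at successor steps of the filtration using $\BlackBox{\cdot}{\cdot}$, precisely as a $\Sigma^0_1$ fact is forced, while the non-detachment that makes clubs informative is installed on a sparse set of $E$-points via \cref{prop:twisting_lemma}.

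To keep~$G$ itself $\kappa$-computable — even though the set of levels at which we ultimately twist is only $\emptyset''$-computable — I would not build the final filtration directly. Instead, as in the proof of \cref{thm:identifying_inaccessible_free_groups} and the sketch in \cref{rmk:low_oracles}, I would build a $\kappa$-computable increasing sequence $\seq{H_s}$ and approximate the filtration $\seq{G_\beta}$, re-indexing when the approximation to~$\emptyset''$ changes its mind and ``puffing up'' with copies of~$\Z$ so that no stage produces a partial group. The non-reflection of~$E$ (\cref{thm:square_really}) guarantees, exactly as in \cref{thm:identifying_freeness-successor}, that each intermediate group $H_s$ (and hence each $G_\beta$) is free, so that $\Detach{G}$ contains a club and~$G$ is genuinely free and possesses bases.

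The hard part will be the coding lemma: arranging the twisting so that \emph{every} club through~$\Detach{G}$, and not merely some definable one, computes~$\emptyset''$. The difficulty is a genuine tension between freeness and unavoidability. To keep~$G$ free the twisting set must be nonstationary, so $\Detach{G}$ contains a club and a club through~$\Detach{G}$ may be sparse and skip any bounded region; yet one must prevent any cheap (say $\kappa$-computable, or even $\emptyset'$-computable) club through~$\Detach{G}$ from existing, since no such club can compute~$\emptyset''$. The resolution is to read the approximation's limit not off the \emph{locations} of the non-detaching levels — which an arbitrary club trivially avoids — but off the detachment certificates that any club is forced to supply at its cofinally many limit points, where a true-stage argument pins each witness search to its final value. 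Verifying that this reading is correct along an \emph{arbitrary} club, and that the required nonstationary twisting set can be realised simultaneously with the $\Sigma^0_1(L_\kappa)$-complete level-detachment demands imposed by the BlackBox, is the technical core of the proof.
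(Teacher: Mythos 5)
Your high-level framing is right --- reduce to clubs through $\Detach{\bar G}$ via \cref{lem:computing_bases_and_clubs}, and use \cref{prop:detachment_is_complete_at_most_successors}/\cref{cor:blackbox} as the source of the second jump --- but the proposal stops exactly where the proof begins, and the architecture you sketch for the missing part is not the one that works. You propose to code $\emptyset''$ into the \emph{locations} of twisting along $E$, with non-reflection and a re-indexed approximation to the filtration as in \cref{thm:identifying_inaccessible_free_groups}; you then correctly observe that an arbitrary club can skip any bounded region, and defer the resolution to an unspecified ``true-stage'' reading of detachment certificates. That deferred step is the entire content of the theorem. The paper's proof needs none of the $E$/non-reflection/re-indexing machinery: it builds $G=\bigoplus_{\alpha<\kappa}G(\alpha)$ where each component $G_t(\alpha)$ is \emph{manifestly} a direct sum of free groups at every stage (so freeness is automatic, with no stationarity argument), and the coding lives \emph{inside} each component, below every club level, so that any club level $s>\alpha$ answers the question about $\alpha$.

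Two concrete ideas are missing. First, the mechanism that turns a $\Pi^0_2$ fact into a single detachment fact: \cref{cor:blackbox} only forces a $\Sigma^0_1$ fact, so the paper introduces the ``length of witnessing'' $\ell_s$, the groups $K_\beta=\BlackBox{H(\alpha)}{``\exists s\,(\ell_s>\beta)"}$, and a dynamic untwisting in which, each time $\ell$ increases, the previously installed potential twist of $H(\alpha)$ is discharged by adjoining a complement $V_\beta$ and a fresh $K_{\ell_t}$ is installed; $H(\alpha)$ then detaches in $G(\alpha)$ iff $\alpha$ lies in the coded $\Pi^0_2$ set. Your proposal never explains how a $\Sigma^0_2$/$\Pi^0_2$ question gets reflected in a detachment fact, only how $\Sigma^0_1$ ones do. Second, the decoding only yields that the coded set $P$ is \emph{c.e.}\ in any club (one enumerates $\alpha$ upon finding, by a $\Sigma^0_1$ search, that $H(\alpha)\div G_s(\alpha)$ for some club level $s>\alpha$); the paper converts enumerability into computability by choosing $P$ to be a $\Pi^0_2$-complete set with the property that any $X$ which enumerates $P$ computes it (joining a $\Pi^0_2$-complete set with the bounded initial segments of $\emptyset'$). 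Without something playing the role of that last trick, your accumulation of ``$\Pi^0_1(\emptyset')$ information along the club'' gives at best an enumeration, not a computation, of $\emptyset''$.
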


\begin{proof}
	We start with a $\Pi^0_2$-complete set~$P$ such that for any~$X\in 2^\kappa$, if $P$ is $X$-c.e.\ then it is $X$-computable. For example let~$P$ be the join of a $\Pi^0_2$-complete set~$\hat P$ and the collection of all bounded initial segments of~$\emptyset'$: if $X$ enumerates~$P$ then it computes~$\emptyset'$, and then~$\hat P$ is both $X$-c.e.\ and $X$-co-c.e.

	\smallskip

	Our plan is as follows. For each~$\alpha<\kappa$, we will uniformly fix a $\kappa$-finite free group~$H(\alpha)$ and produce a $\kappa$-computable free group~$G(\alpha)\supset H(\alpha)$ such that $H(\alpha)$ detaches in~$G(\alpha)$ if and only if $\alpha\in P$.  Our group will be $G = \bigoplus_{\alpha <\kappa} G(\alpha)$.  We will now argue that the set of $\alpha$ such that $H(\alpha)$ detaches in $G(\alpha)$ is c.e.\ relative to any basis of $G$.

	In the construction of~$G(\alpha)$ we will produce a $\kappa$-computable filtration $\seq{G_s(\alpha)}_{s<\kappa}$, with $G_0(\alpha) = H(\alpha)$.   Having done that, we let $G_s = \bigoplus_{\alpha<s} G_s(\alpha)$ for each $s < \kappa$.  Then $\bar G = \seq{G_s}_{s < \kappa}$ is a filtration of~$G$.  By \cref{lem:computing_bases_and_clubs}, from any basis of~$G$ we effectively obtain a club subset~$C$ of~$\Detach{\bar G}$.
	
	We claim that if $s \in \Detach{\bar G}$ and $\alpha < s$, then $H(\alpha)\div G(\alpha)$ if and only if $H(\alpha)\div G_s(\alpha)$.  For one direction, we have that if $H(\alpha)\div G(\alpha)$, then $H(\alpha)\div G_t(\alpha)$ for every $t < \kappa$.  Conversely, since $\alpha < s$, $G_s(\alpha)\div G_s$ by definition of $G_s$, and since $s \in \Detach{\bar G}$, $G_s\div G$, so by transitivity of detachment, $G_s(\alpha)\div G$.  So if $H(\alpha)\div G_s(\alpha)$, we have that $H(\alpha)\div G$, and so $H(\alpha)$ detaches inside every subgroup of $G$, including $G(\alpha)$.
	
	Since $H(\alpha)\div G_s(\alpha)$ is a $\Sigma^0_1$ relation, we can thus enumerate~$P$ from~$C$ by enumerating all~$\alpha$ such that $H(\alpha)\div G_s(\alpha)$ for some $s \in C$ with $s > \alpha$.
	
	\medskip

	It remains only to uniformly construct the $G(\alpha)$ and their filtrations.  Fix~$\alpha$, and let $H(\alpha)$ be some fixed copy of~$\Int^{\kappa^{-}}$. Fix $\psi$, a bounded-quantifier formula which is the matrix of a definition of~$P$:
	 \[ \beta \in P \iff  L_\kappa \models \forall x \exists y\, \psi(\beta, x,y) .\] 
	  For $s\le\kappa$, let $\ell_s = \ell_s(\alpha)$, the ``length of witnessing'' of the potential membership of $\alpha$ in $P$, to be 
	\[
		\ell_s = \sup \left\{ \gamma<s \,:\,  (\forall x<\gamma) (\exists y<s) \,\,\psi(\alpha,x,y) \right\}.
	\]
	The sequence $\seq{\ell_s}_{s < \kappa}$ is $\kappa$-computable. The sequence $\seq{\ell_s}_{s\le \kappa}$ is non-decreasing and continuous. And $\alpha\in P$ if and only if $\ell_\kappa = \kappa$ if and only if the sequence $\seq{\ell_s}_{s<\kappa}$ is unbounded in~$\kappa$. Further, 
$\alpha\in P$ if and only if for all $s<\kappa$, $\ell_s < \ell_\kappa$; for 
if $\alpha\notin P$, then $\seq{\ell_s}_{s<\kappa}$ is eventually constant; this follows from the fact that $\kappa$ is a regular cardinal: if $\ell_\kappa<\kappa$ then there is some $s<\kappa$ such that for all $\beta<\ell_\kappa$ there is some $y<s$ such that $\psi(\alpha,\beta,y)$ holds. 

	\smallskip
	
	The idea is the following. Given a length~$\ell_s$, we extend $G_s(\alpha)$ to potentially twist~$H(\alpha)$, to be untangled when we discover a greater length $\ell_t>\ell_s$. 

	We give the formal details. For each $\beta<\kappa$, using \cref{cor:blackbox}, let
	\[
		K_\beta = \BlackBox{H(\alpha)}{``\exists s\, (\ell_s > \beta)"}.
	\]
	Each group $K_\beta$ is free, $H(\alpha)\subset K_\beta$, and $H(\alpha)$ detaches in~$K_\beta$ if and only if $\ell_\kappa > \beta$. 
The sequence of groups $\seq{K_\beta}_{\beta<\kappa}$ is $\kappa$-computable. By taking isomorphic copies, we may assume that $K_\gamma \cap K_\beta = H(\alpha)$ if $\beta\ne \gamma$. 

For each $\beta<\ell_\kappa$, since $H(\alpha)$ detaches in~$K_\beta$, we can effectively find a complement $V_\beta$ for $H(\alpha)$ inside~$K_\beta$. 
Note that the function $\beta\mapsto V_\beta$ is only partial $\kappa$-computable (uniformly in~$\alpha$), as the set $\left\{ (\alpha,\beta) \,:\,  \beta<\ell_\kappa(\alpha) \right\}$ is $\kappa$-c.e.\ but not $\kappa$-computable. At each stage $t<\kappa$ we will have found $V_{\beta}$ for all $\beta<\ell_t$.

We now define the sequence of groups $G_t(\alpha)$ for $t\le \kappa$.  
	Let $U = U(\alpha)$ be the set of limit ordinals $t\le \kappa$ such that for all $s<t$, $\ell_s<\ell_t$. 
(Recall that $\alpha\in P$ if and only if $\kappa\in U(\alpha)$.)

	For brevity, for $t\le \kappa$ let
	 $R_t = \Int^t\oplus \bigoplus_{\beta<\ell_t} V_\beta.$ Here $\Int^t$ is some fixed copy of that group such that $\Int^s\subseteq \Int^t$ if $s<t$. 
	 We define:
	\begin{enumerate}
		\item[(i)] If $t\in U$  then $G_t(\alpha) = H(\alpha)\oplus R_t$.
		\item[(ii)] If $t\notin U$ then $G_t(\alpha) = K_{\ell_t} \oplus R_t$.
	\end{enumerate}

	Note that $G_0(\alpha) = H(\alpha)$ as promised. Also note that the function $t\mapsto G_t(\alpha)$  (restricted to $t<\kappa$) is $\kappa$-computable. We need to ensure that this sequence of groups is increasing and continuous. Fix $s< t\le \kappa$;  we show that $G_s(\alpha)\subset G_t(\alpha)$. First note that since $H(\alpha)\subset K_{\ell_s}$, it suffices to show that $K_{\ell_s}\oplus R_s \subset G_t(\alpha)$. There are two cases: 
	\begin{itemize}
		\item If $\ell_t = \ell_s$ then $t\notin U$. In this case the result follows from the fact that $R_s\subset R_t$. 
		\item If $\ell_s < \ell_t$ then $K_{\ell_s}\oplus R_s \subset H(\alpha)\oplus R_t$, and since $H(\alpha)\subset K_{\ell_t}$, we see that $K_{\ell_s}\oplus R_s \subset G_t(\alpha)$ regardless of whether $t\in U$ or not. 
	\end{itemize}
	Finally, suppose that $t\le \kappa$ is a limit ordinal; we need to ensure that $G_t(\alpha) = \bigcup_{s<t} G_s(\alpha)$. But this follows from the fact that $R_t = \bigcup_{s<t} R_s$; we always have $H(\alpha) = G_0(\alpha)\subset \bigcup_{s<t}G_s(\alpha)$, which takes care of the case $t\in U$; if $t\notin U$ then $K_{\ell_t} = K_{\ell_s}$ for some $s<t$ such that $s\notin U$, and so $K_{\ell_t}\subset \bigcup_{s<t}G_s(\alpha)$. 

	\smallskip

	We remark that this static description of the construction, while precise, does mask a little our intentions, which are described dynamically. At a stage $s\notin U$, we have $H(\alpha)$ potentially twisted in $G_s(\alpha)$ (as it is potentially twisted inside $K_{\ell_s}$). It remains this way until we discover some $t>s$ at which we see that $\ell_t > \ell_s$. We then discover that $H(\alpha)$ was not in fact twisted inside $G_s(\alpha)$, and we (potentially) retwist it again inside $G_t(\alpha)$, via $K_{\ell_t}$. 

	\smallskip

	Finally, we need to show that $H(\alpha)$ detaches in~$G(\alpha) = G_\kappa(\alpha)$ if and only if $\alpha\in P$. But we observed that $\alpha\in P$ if and only if $\kappa \in U$. If $\kappa\in U$ then certainly $H(\alpha)\div G_\kappa(\alpha)$. If $\kappa\notin U$ then $\ell_\kappa<\kappa$ and $H(\alpha)$ does not detach in~$K_{\ell_\kappa}$, and as $K_{\ell_\kappa}\div G_\kappa(\alpha)$, we get $H(\alpha) \ndiv G_\kappa(\alpha)$. 
\end{proof}


\subsection{More on~$\kappa$-finite free groups} 
\label{sub:more_on_kappa_finite}

\Cref{prop:computing_detachment_set_is_sometimes_easy,prop:computing_freeness_at_successor_of_weakly_compact} raise a separate question: in general, what is the complexity of the set of $\kappa$-finite free abelian groups? Together with \cref{prop:detachment_is_complete_at_most_successors}, we see that the only case left open is when~$\kappa$ is inaccessible. 

For the following, we generalise the definition of weak truth-table reducibility in terms of bounding the use function. If~$\Phi$ is a $\kappa$-functional and $\Phi(Y)=X$, then for all $\beta<\kappa$ we define the use of this reduction to be the least $\gamma$ such that $(Y\rest{\gamma}, X\rest{\beta})\in \Phi$. We say that $Y$ $\kappa$-wtt computes~$X$ if there is such a functional for which the use function is bounded by a $\kappa$-computable function.


\begin{theorem} \label{prop:kappa_finite_inaccessible_Turing}
	Let~$\kappa$ be inaccessible. Then the collection of $\kappa$-finite free abelian groups $\kappa$-computes $\emptyset'$, but does not $\kappa$-wtt compute~$\emptyset'$ (and so is not 1-complete for the class $\Sigma^0_1(L_\kappa)$). 
\end{theorem}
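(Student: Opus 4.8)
The plan is to prove both halves by relating the set $\+F$ of (codes of) $\kappa$-finite free abelian groups to $\emptyset'$. Since $\+F$ is $\kappa$-c.e.\ (search for a basis; every basis is $\kappa$-finite), we have $\+F\le_\kappa\emptyset'$, in fact $\+F\le_m\emptyset'$; so the first assertion reduces to producing the reverse reduction $\emptyset'\le_\kappa\+F$, and the second to showing this cannot be made $\kappa$-wtt. Note also that $\emptyset'$ is $\kappa$-wtt-complete among $\kappa$-c.e.\ sets, so ``$\+F$ does not $\kappa$-wtt compute $\emptyset'$'' is exactly ``$\+F$ is not $\kappa$-wtt-complete,'' whence the parenthetical about $1$-completeness follows (a $1$-reduction is a wtt-reduction).

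For $\emptyset'\le_\kappa\+F$ I would show that $\overline{\emptyset'}$ is c.e.\ relative to $\+F$. Fix $\alpha<\kappa$ and, uniformly in $\alpha$, run the construction of \cref{thm:identifying_freeness-successor} below $\kappa$, keyed to the enumeration of $\emptyset'$: twist along $E$ (via \cref{prop:twisting_lemma}) at each $\beta\in E$ as long as $\alpha$ has not yet entered $\emptyset'$, and switch to adjoining copies of $\Int$ once $\alpha$ appears. Because $\kappa$ is inaccessible, every $\beta$ strictly between $\alpha$ and the least cardinal $\lambda=|\alpha|^{+}>\alpha$ is singular, so $E$ does not reflect there (\cref{thm:square_really}) and each such $G_\beta$ is free; meanwhile $E\cap[\alpha,\lambda)$ is stationary in the regular cardinal $\lambda$ (by the argument after \cref{lem:getting_into_E}). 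Hence if $\alpha\in\emptyset'$ the twisting stops below $\lambda$ and every $G_\beta$ is free, while if $\alpha\notin\emptyset'$ then $G_\lambda$ is non-free, and so (being a subgroup of each later group, using \cref{fact:subgroups_are_free}) is every $G_\beta$ with $\beta\ge\lambda$. Since $\alpha\in\emptyset'$ is decided by stage $\lambda$ (as $L_\lambda\prec_{\Sigma_1}L_\kappa$), this is correct. Thus $\alpha\notin\emptyset'$ if and only if some $G_\beta$ is non-free, a $\+F$-c.e.\ condition (search for $\beta$ with the code of $G_\beta$ outside $\+F$); so $\overline{\emptyset'}$ is $\+F$-c.e., and together with $\emptyset'$ being $\kappa$-c.e.\ this gives $\emptyset'\le_\kappa\+F$, hence $\+F\equiv_\kappa\emptyset'$. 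Observe that the only witnesses to $\alpha\notin\emptyset'$ supplied here sit at codes $\ge|\alpha|^{+}$, so this reduction has unbounded use.

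For the negative half I would first isolate the combinatorial heart as a no-domination lemma: \emph{there is no total $\kappa$-computable $f\colon\kappa\to\kappa$ with $f(\mu)\ge\mu^{+}$ for all large cardinals $\mu$.} This has a clean proof: if $f$ has $\Sigma_1$ graph with parameter $p\in L_{\mu_0}$, then for every cardinal $\mu>\mu_0$ the fact $L_\mu\prec_{\Sigma_1}L_\kappa$ forces $f[\mu]\subseteq\mu$ (a $\Sigma_1$-witness for $f(\nu)$ found in $L_\kappa$ reflects into $L_\mu$ and is absolute); taking $\mu=\nu^{+}$ for a cardinal $\nu>\mu_0$ and $\nu<\mu$ gives $f(\nu)<\nu^{+}$, contradicting $f(\nu)\ge\nu^{+}$. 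The plan is then: assume $\emptyset'\le_\kappa\+F$ via a functional $\Phi$ whose use is bounded by a $\kappa$-computable $u$, and derive such a forbidden $f$. Concretely, I would argue that for cofinally many $\alpha$ the reduction is forced to read $\+F$ past code $|\alpha|^{+}$, i.e.\ $u(\alpha)\ge|\alpha|^{+}$; then $f(\mu):=\sup_{\alpha\in[\mu,\mu^{+})}u(\alpha)$ is a total $\kappa$-computable function with $f(\mu)\ge\mu^{+}$ on a tail of cardinals, contradicting the lemma.

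The main obstacle is precisely this use lower bound, $u(\alpha)\ge|\alpha|^{+}$. By the analysis of the first part, the cheap witnesses to ``$\alpha\notin\emptyset'$'' all live at codes $\ge|\alpha|^{+}$; the danger is that a clever reduction certifies $\emptyset'(\alpha)$ instead from the freeness pattern of groups of size $|\alpha|$, whose codes lie in $[|\alpha|,|\alpha|^{+})$ and whose freeness already encodes complete $\Sigma^1_1(L_{|\alpha|})$ (equivalently $\Sigma_1(L_{|\alpha|^{+})}$) information. The way I expect to close this gap is to note that exploiting a size-$|\alpha|$ group requires $\kappa$-computably \emph{locating} the cardinal $|\alpha|$ within $[|\alpha|,|\alpha|^{+})$, and that recognizing or locating $|\alpha|$ is itself not possible with a $\kappa$-computable use bound on $\emptyset'\equiv_\kappa\+F$ for exactly the same no-domination reason; stably settling $\+F\restriction u(\alpha)$ only happens by stage $|u(\alpha)|^{+}$, and since $\mu\mapsto\mu^{+}$ is not $\kappa$-computably bounded, no fixed computable $u$ with $u(\alpha)<|\alpha|^{+}$ can make the certificate permanent against the c.e.\ oracle $\+F$. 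Turning this permitting/timing tension into the clean inequality $u(\alpha)\ge|\alpha|^{+}$, reconciled with the $\Sigma^1_1$-completeness of freeness at size $|\alpha|$, is the step I anticipate will require the most care; the remaining bookkeeping (freeness/non-freeness of each $G_\beta$, and $\kappa$-finiteness and uniformity of the families) follows the patterns already established for \cref{thm:identifying_freeness-successor}.
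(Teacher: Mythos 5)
Your positive half is correct, and it is a genuine variant of the paper's: you make the freeness pattern of the groups $G_\beta$, $\beta\in[\alpha,|\alpha|^+]$, directly record whether $\alpha$ enters $\emptyset'$ (essentially the $\Sigma^0_1$-hardness construction of \cref{prop:weakly_compact:Pi02_complete} repurposed as a reduction), whereas the paper twists unconditionally along $E$, uses the oracle only to detect the first non-free group and thereby to locate $\alpha^+$, and then reads the answer off $L_{\alpha^+}$ directly. Both yield a Turing reduction whose use is roughly $|\alpha|^+$, hence not computably bounded.

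The negative half has a genuine gap, and you have located it yourself: the inequality $u(\alpha)\ge|\alpha|^+$ for cofinally many $\alpha$ is asserted but not proved, and your sketch of how to rule out a reduction that only reads codes in $[|\alpha|,|\alpha|^+)$ is a heuristic about ``permitting/timing tension'' rather than an argument. The worry is substantive: for $\mu=|\alpha|$ regular and not weakly compact, freeness of groups of universe $\mu$ is $\Sigma^1_1(L_\mu)$-hard, and by \cref{prop:detachment_is_complete_at_most_successors} the $\Sigma_1(L_{\mu^+})$ fact ``$\alpha\in\emptyset'$'' really is coded into the freeness of a group of size $\mu$; so any obstruction must concern how such a witness could be located and permanently certified with computably bounded search, and it is not even clear that $u(\alpha)\ge|\alpha|^+$ is the true statement. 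The paper sidesteps all of this with a reflection argument: given a wtt reduction $\Psi$ with $\kappa$-computable use bound $u$, choose a cardinal $\lambda<\kappa$ that is the successor of a \emph{singular} cardinal and contains the parameters of $\Psi$ and $u$. By $\Sigma_1$-elementarity of $L_\lambda$ in $L_\kappa$ --- exactly the fact behind your (correct) no-domination lemma --- $u$ maps $\lambda$ into $\lambda$, so $\Psi$ restricts to a $\lambda$-computable reduction of $\emptyset'(L_\lambda)$ to the set of $\lambda$-finite free groups; but at a successor of a singular cardinal Shelah's singular compactness theorem makes that oracle $\lambda$-computable (\cref{prop:computing_freeness_at_successor_of_weakly_compact}), so $\emptyset'(L_\lambda)$ would be $\lambda$-computable, a contradiction. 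In short, your no-domination lemma supplies the right control on the use bound, but the missing second ingredient is the choice of reflection point at which the oracle becomes trivial; working at successors of regulars, where the oracle is genuinely $\Sigma^1_1$-complete, is what leaves your argument stuck.
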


\begin{proof}
	Let~$\alpha<\kappa$; we want to find out whether $\alpha\in \emptyset'$ or not. We start building an increasing and continuous sequence of groups $\seq{G_\beta}_{\beta\in [\alpha,\alpha^+]}$, always twisting at $\beta\in E$. That is, we start with $G_\alpha$ being trivial. We take unions at limit stages. At successors of successors we add a copy of~$\Int$. Suppose that $\beta>\alpha$ is a limit ordinal and~$G_\beta$ is already defined. We consult our oracle to see if~$G_\beta$ is free. If it is, then~$\beta$ will be singular, and so we can wait for~$\sing{\beta}$ and observe if~$\beta\in E$  or not; if so we twist~$G_\beta$ inside~$G_{\beta+1}$; otherwise we do not. The arguments above show that~$G_\beta$ is free if and only if $\beta<\alpha^+$. So once we see that~$G_\beta$ is not free, we know that~$\beta = \alpha^+$, and we can consult~$L_{\alpha^+}$ to see whether~$\alpha\in \emptyset'$ or not. 

	\smallskip

	Suppose, for a contradiction, that the set of free abelian groups $\kappa$-wtt computes~$\emptyset'$; let $\Psi$ be a reduction. Let~$\lambda<\kappa$ be a successor of a singular cardinal, sufficiently large so that the parameter used to compute~$\Psi$ is in~$L_\lambda$. Then the restriction of~$\Psi$ to~$L_\lambda$ is in fact a $\lambda$-computable reduction of~$\emptyset'(L_\lambda)$ to the set of $\lambda$-finite abelian groups; this contradicts \cref{prop:computing_freeness_at_successor_of_weakly_compact}. 
\end{proof}

\section{Singular cardinals} 
\label{sec:singular_cardinals}

Recall that even when~$\kappa$ is singular, $L_\kappa$ is admissible and~$\kappa$-computability makes sense. When analysing groups with universe~$\kappa$, though, we need to take care, as the notion of filtration is not as robust. In general, if $(L_\kappa,G)$ is not admissible, then it is likely that some bounded subsets of~$G$ generate subgroups which are unbounded. This does not happen when $(L_\kappa,G)$ is admissible (for example, when~$G$ is $\kappa$-computable), as there is a $\kappa$-computable function from $B\times \w$ onto $\Span(B)$. In particular, when~$G$ is $\kappa$-computable, for any cardinal $\lambda < \kappa$, $G\rest{\lambda}$ is a subgroup of~$G$. 

\smallskip

In the absence of well-behaved filtrations we consider the general detachment set, restricted to $\kappa$-finite subgroups. Fix a singular cardinal~$\kappa$ and a $\kappa$-computable group~$G$. First, for a $\kappa$-finite subgroup~$H$ of~$G$, let
\[ [H,G]_\bdd  = [H,G]\cap L_\kappa \]
be the collection of $\kappa$-finite subgroups of~$G$ extending~$H$; and then let~$\FDet(G)$ be the collection of all $\kappa$-finite subgroups~$H$ of~$G$ which detach in every subgroup in $[H,G]_\bdd$.  Let $0$ denote the trivial group.

\begin{lemma} \label{lem:bdd_detachment_set_cofinal}
	Suppose that every $\kappa$-finite subgroup of~$G$ is free. Then
	the detachment set $\FDet(G)$ is cofinal in $[0,G]_\bdd$: every $\kappa$-finite subgroup~$H$ of~$G$ has an extension in $\FDet(G)$. 
\end{lemma}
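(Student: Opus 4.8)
The plan is to reduce the cofinality statement to a single closure property of $\kappa$-free groups, and then obtain that property from a reflection argument whose combinatorial core is \cref{prop:club_and_freeness}.

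First the reduction. I claim that a $\kappa$-finite subgroup $H'\le G$ already lies in $\FDet(G)$ as soon as the quotient $G/H'$ is \emph{$\kappa$-free}, meaning that every $\kappa$-finite subgroup of $G/H'$ is free. Indeed, let $K\in [H',G]_\bdd$. Since $K$ is $\kappa$-finite it is free by hypothesis, so \cref{fact:detachment_and_freeness_of_quotient} gives $H'\div K$ if and only if $K/H'$ is free; but $K/H'$ is a $\kappa$-finite subgroup of $G/H'$, hence free by assumption, so $H'\div K$. As $K$ was arbitrary in $[H',G]_\bdd$, this yields $H'\in \FDet(G)$. I note that the full strength ``$G/H'$ is $\kappa$-free'' is more than needed: applying \cref{lem:detachment_in_subgroup_of_same_size} inside each free $K$, it is enough that $G/H'$ be $|H'|^{+}$-free, that is, that every subgroup of $G/H'$ of size at most $|H'|$ be free, since then $H'\div K''$ for all $K''\in[H',K]$ of size $|H'|$, which forces $H'\div K$. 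Thus the whole lemma reduces to: \emph{every $\kappa$-finite subgroup $H$ of $G$ extends to a $\kappa$-finite $H'$ with $G/H'$ (locally) free.}

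To produce such an $H'$ I would use an elementary submodel. Fix a large regular $\chi$ and set $\mu = |H| + \aleph_0$; since $\kappa$ is a limit cardinal, $\mu^{+}<\kappa$, and under $V=L$ all the cardinal-arithmetic bookkeeping below is clean by GCH. Choose $N\prec (H(\chi),\in,<_L)$ with $G,H,\kappa\in N$, $\mu+1\subseteq N$, and $|N|=\mu$, so that $\delta:=N\cap\kappa$ is an ordinal below $\kappa$; then $H\subseteq N$ and $H' := G\cap N$ is a $\kappa$-finite subgroup of $G$ extending $H$, of size $\mu$. The groups $G\cap N'$, as $N'$ ranges over a continuous internal chain of submodels exhausting $N$, form a filtration of $H'$, and more importantly give filtrations of the relevant quotients. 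The key assertion is that for \emph{club many} such $N$ (equivalently, for a club of points $\delta$) the subgroup $G\cap N$ is ``nice'', in the sense that $G/(G\cap N)$ is $\kappa$-free; cofinality of the good $N$ then provides one containing $H$, which is all we need. (It is genuinely a club rather than all $N$: at $E$-type points of countable cofinality — exactly the obstruction isolated in \cref{def:class_E} — reflection can fail, so one must avoid a nonstationary set of bad $\delta$.)

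The heart of the matter, and the step I expect to be the main obstacle, is verifying that $G/(G\cap N)$ is $\kappa$-free for good $N$. One cannot do this by handling one extension $K$ at a time: there are too many $\kappa$-finite $K\supseteq H'$, and freeness of a quotient — unlike purity — is not a finitary or one-step condition. This is precisely why the construction must be self-referential, which is what the elementary submodel $N$ supplies: the statement ``$G$ is $\kappa$-free'' reflects into $N$, and running this reflection against a continuous chain of submodels shows that along a club of stages the intermediate pieces detach, so that by \cref{prop:club_and_freeness} the quotient of any $\kappa$-finite $K\supseteq H'$ is free. Detachment is propagated through the chain by \cref{prop:detachment_is_forever}, and \cref{lem:detachment_in_subgroup_of_same_size} reduces the verification to extensions of size $|H'|$. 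This reflection-and-club argument is the technical content (it is the standard machinery behind the cofinality of nice subgroups in $\kappa$-free groups, as in \cite{EklofMekler}); once it is in hand, the reduction of the first paragraph finishes the proof.
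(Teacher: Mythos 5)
Your opening reduction is sound and matches the paper's starting point: by \cref{fact:detachment_and_freeness_of_quotient} and \cref{lem:detachment_in_subgroup_of_same_size}, $H'\in\FDet(G)$ exactly when $H'\div K$ for every $K\in[H',G]_\bdd$ of size $|H'|$. The gap is in the second half. The assertion that for club many elementary submodels $N$ one has $G\cap N\in\FDet(G)$ is essentially the whole content of the lemma, and the mechanism you offer for it --- all the intermediate pieces are free, so by reflection club many of them detach --- is precisely the implication that fails in general. \Cref{prop:club_and_freeness}(2) deduces ``club many detach'' only from freeness of the \emph{whole} group at a \emph{regular} length; here $G$ is not assumed free (the lemma is used to prove that it is), and an arbitrary $\kappa$-finite $K\supseteq G\cap N$ is not an element of $N$, so elementarity gives you no control over whether $G\cap N$ detaches in $K$. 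Moreover the ``club many'' formulation is itself doubtful: $\FDet(G)$ need not be closed under unions of chains, since $H_n\div K$ for all $n$ does not give $\bigcup_n H_n\div K$ (this is exactly the phenomenon of \cref{exmp:Melnikov} and \cref{prop:Melnikov_done_short}), so at limit points of your chain of submodels the good behaviour can be lost, and you give no reason why the bad points form a nonstationary set. For regular $\kappa$ the analogous bad set is stationary for the groups of \cref{thm:identifying_freeness-successor}; what rescues the present situation is not soft reflection, and deferring to ``standard machinery'' at this point amounts to assuming a form of singular compactness.

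The paper's proof runs in the opposite direction and is much shorter. Suppose $H$ has no extension in $\FDet(G)$, and let $\lambda=|H|^+<\kappa$, a regular cardinal. By \cref{lem:detachment_in_subgroup_of_same_size}, any $\kappa$-finite $H_i\supseteq H$ of size $|H|$ not in $\FDet(G)$ fails to detach in some $K\in[H_i,G]_\bdd$ of size $|H|$, and with parameter $\lambda$ such a witness can be found $\kappa$-computably. Iterating $\lambda$ times yields a $\kappa$-computable increasing continuous chain $\seq{H_i}_{i<\lambda}$ with $H_i \ndiv H_{i+1}$ throughout; admissibility of $(L_\kappa,G)$ --- this is where the $\kappa$-computability of $G$ is used essentially, and it plays no role in your argument --- keeps every initial segment, and hence $H_\lambda$, $\kappa$-finite. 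Then $H_\lambda$ is free by hypothesis while $\Detach{\seq{H_i}_{i<\lambda}}$ is empty, contradicting \cref{prop:club_and_freeness}(2) at the regular cardinal $\lambda$. To salvage your approach you would need an actual proof that club many $G\cap N$ land in $\FDet(G)$; the diagonalisation above avoids having to prove anything of that strength.
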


\begin{proof}
	First, note that non-detachment is witnessed at the same cardinality. That is, if~$H$ is a $\kappa$-finite subgroup of~$G$ which is not in $\FDet(G)$, then there is some $K\in [H,G]_\bdd$ of size~$|H|$ in which~$H$ does not detach. To see this simply apply \cref{lem:detachment_in_subgroup_of_same_size} to~$H$ and the group~$G\rest \lambda$, where $\lambda < \kappa$ is regular and sufficiently large to include~$H$, the parameter used for the computable definition of~$G$, and a~$\kappa$-finite subgroup of~$G$ in which~$H$ does not detach. 

	\smallskip

	Fix some $\kappa$-finite subgroup~$H\notin \FDet(G)$; let $\lambda = |H|^+$. Recall that with parameter~$\lambda$, computing detachment among groups of size~$<\lambda$ is $\kappa$-computable (see the proof of \cref{prop:computing_detachment_set_is_sometimes_easy} in the inaccessible case): to tell whether some $\kappa$-finite group~$K$ detaches in another one~$P$ of size $<\lambda$, find a bijection $g$ from~$P$ to some $\alpha<\lambda$ and then see if in~$L_\lambda$ we can see a complement for $g[H]$ in~$g[P]$. 

	\smallskip

	Suppose, for a contradiction, that~$H$ has no extension in $\FDet(G)$. Now we construct a $\kappa$-computable filtration $\bar H = \seq{H_i}_{i<\lambda}$ of a $\kappa$-finite group~$H_\lambda$ as follows. Starting with $H_0 = H$, given~$H_i$ we find some $H_{i+1}\in [H_i,G]_\bdd$ of size~$|H| = \lambda^-$ in which~$H_i$ does not detach. By the paragraph before, such~$H_{i+1}$ can be found $\kappa$-effectively. This ensures that for all limit $j\le \lambda$, the sequence $\bar{H}\rest{j}$ is $\kappa$-finite and so $H_j = \bigcup_{i<j} H_i$ is~$\kappa$-finite. Here again we crucially used the assumption that~$G$ is $\kappa$-computable. 

	Now we reached our contradiction: by assumption, $H_\lambda$ is free. But $\Detach{\bar H}$ is empty, contradicting the fact that it must contain a club of~$\lambda$ (\cref{prop:club_and_freeness})). 
\end{proof}

\begin{proposition} \label{prop:cofinality_omega_identifying_freeness}
	Suppose that~$\cf(\kappa) = \aleph_0$. Then a $\kappa$-computable group~$G$ is free if and only if every $\kappa$-finite subgroup of~$G$ is free. 
\end{proposition}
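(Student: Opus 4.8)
The forward direction is immediate: if $G$ is free then every subgroup of $G$ is free by \cref{fact:subgroups_are_free}, in particular every $\kappa$-finite one. So the content lies in the converse, and I would assume throughout that every $\kappa$-finite subgroup of $G$ is free — this is precisely the hypothesis needed to invoke \cref{lem:bdd_detachment_set_cofinal}.

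The plan is to realise $G$ as an increasing $\w$-chain of $\kappa$-finite subgroups, each detaching in its successor, and then to finish with the telescoping direct-sum argument already used for \cref{prop:club_and_freeness}(1). Concretely, since $\cf(\kappa) = \aleph_0$, I would fix an increasing sequence of cardinals $\kappa_0 < \kappa_1 < \cdots$ with $\sup_n \kappa_n = \kappa$, together with an enumeration $\{g_\alpha : \alpha < \kappa\}$ of $G$. I then build an increasing sequence $\seq{H_n}_{n<\w}$ of $\kappa$-finite subgroups of $G$ with each $H_n \in \FDet(G)$, as follows. Start with $H_0$ the trivial group. Given $H_n \in \FDet(G)$, first form $H_n' = \Span\!\big(H_n \cup \{g_\alpha : \alpha < \kappa_{n+1}\}\big)$; because $G$ is $\kappa$-computable, the span of a bounded set is bounded, so $H_n'$ is again $\kappa$-finite. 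Then apply \cref{lem:bdd_detachment_set_cofinal} to $H_n'$ to obtain a $\kappa$-finite extension $H_{n+1} \in \FDet(G)$ of $H_n'$.

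Two things then need checking. First, $\bigcup_n H_n = G$: every $g_\alpha$ lies in $H_n'$ (hence in $H_{n+1}$) as soon as $\alpha < \kappa_{n+1}$, and $\sup_n \kappa_n = \kappa$ guarantees that every index $\alpha<\kappa$ is eventually caught. Second, $H_n \div H_{n+1}$ for each $n$: indeed $H_{n+1} \in [H_n, G]_\bdd$ and $H_n \in \FDet(G)$, so $H_n$ detaches in $H_{n+1}$ by definition of $\FDet(G)$. With $H_0$ trivial, I would then choose complements $H_{n+1} = H_n \oplus K_n$, note that each $K_n$ is free as a subgroup of the free group $H_{n+1}$, observe $G = \bigoplus_n K_n$, and take the union of bases of the $K_n$ to obtain a basis of $G$.

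The main obstacle — and the only place where cofinality $\aleph_0$ is genuinely used — is arranging that the chain simultaneously stays $\kappa$-finite at every finite stage yet exhausts all of $G$. This works precisely because a countable union of sets of size $<\kappa$ can reach size $\kappa$ exactly when $\cf(\kappa) = \aleph_0$; for larger cofinality such a union would remain bounded and the construction would never cover $G$. The supporting point is that $\kappa$-computability of $G$ keeps $H_n'$ bounded at each step (spans of bounded sets are bounded), so that \cref{lem:bdd_detachment_set_cofinal} — whose hypothesis that every $\kappa$-finite subgroup is free we have assumed — can be applied to a genuinely $\kappa$-finite group and returns a $\kappa$-finite member of $\FDet(G)$.
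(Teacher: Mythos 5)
Your proof is correct and follows essentially the same route as the paper's: alternate between absorbing the bounded pieces $G\rest{\kappa_n}$ (kept $\kappa$-finite by the admissibility of $(L_\kappa,G)$) and jumping into $\FDet(G)$ via \cref{lem:bdd_detachment_set_cofinal}, then conclude with the telescoping direct-sum argument of \cref{prop:club_and_freeness}. The only difference is cosmetic: you merge the paper's interleaved $H_n\subseteq K_n$ chain into a single sequence.
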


Again notice that this is stronger than Shelah's singular compactness theorem, as there are many countable subgroups of~$G$ which are not $\kappa$-finite. 

\begin{proof}
	Suppose that every $\kappa$-finite subgroup of~$G$ is free. Let $\seq{\kappa_n}$ be a cofinal sequence in~$\kappa$. Define a sequence 
	\[
		H_0 \subseteq K_0 \subseteq H_1 \subseteq K_1 \subseteq H_2 \subseteq K_2 \subseteq \dots
	\]
	such that each~$K_i\in \FDet(G)$ and $G\rest {\kappa_n} \subseteq H_n$; for example we can simply let~$H_n$ be the subgroup generated by~$K_{n-1}\cup G\rest{\kappa_n}$. So $G = \bigcup_n K_n$ and each~$K_n$ detaches in~$K_{n+1}$; the familiar process now gives a basis of~$G$. 
\end{proof}

\begin{corollary} \label{cor:simple_singular_bases}
	Suppose that $X\ge_\kappa \emptyset'$ computes a cofinal~$\w$-sequence in~$\kappa$. Then every $\kappa$-computable free group has an $X$-computable basis. 
\end{corollary}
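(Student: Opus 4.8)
The plan is to make the construction in the proof of \cref{prop:cofinality_omega_identifying_freeness} effective relative to~$X$. Since $X$ computes a cofinal $\omega$-sequence, $\kappa$ has cofinality $\aleph_0$, so that proposition applies; and as $G$ is free, every $\kappa$-finite subgroup of~$G$ is free. First I would use $\emptyset'\le_\kappa X$ to replace the given cofinal sequence by a cofinal $\omega$-sequence $\seq{\kappa_n}$ of \emph{cardinals} below~$\kappa$ (the least cardinal above a given ordinal is $\emptyset'$-computable, since being a cardinal is $\Pi^0_1$), so that each $G\cap \kappa_n$ is a subgroup of~$G$ and $G = \bigcup_n (G\cap \kappa_n)$.

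The key point is that membership in $\FDet(G)$ is $\emptyset'$-decidable, hence $X$-decidable. Here I would first note $\FDet(G) = \left\{ H\in [0,G]_\bdd \,:\, H \div G \right\}$: one inclusion is \cref{prop:detachment_is_forever}, and the other follows from \cref{lem:detachment_in_subgroup_of_same_size}, since the witnessing groups of size~$|H|$ are all $\kappa$-finite. To decide $H \div G$, I would use $\emptyset'$ to compute $\lambda = |H|^+ < \kappa$ (using that $\kappa$ is a limit cardinal), and then recall, as in the proof of \cref{lem:bdd_detachment_set_cofinal}, that with the parameter~$\lambda$ the detachment relation among subgroups of size $<\lambda$ is $\kappa$-computable (one searches~$L_\lambda$ for a complement, a bounded search). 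Thus ``there is a $\kappa$-finite $K\in [H,G]_\bdd$ of size~$|H|$ with $H \ndiv K$'' is a $\Sigma^0_1(L_\kappa)$ statement in the parameter~$\lambda$, whose negation is exactly $H\div G$ by \cref{lem:detachment_in_subgroup_of_same_size}; and $\emptyset'$ decides every such statement.

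With this in hand I would run the construction $X$-computably. Choose $K_0\in \FDet(G)$ containing $G\cap \kappa_0$ by enumerating $\kappa$-finite subgroups and testing membership in $\FDet(G)$ with~$X$, the search halting because $\FDet(G)$ is cofinal (\cref{lem:bdd_detachment_set_cofinal}); and given~$K_n$, let $H_{n+1}$ be the subgroup generated by $K_n\cup (G\cap \kappa_{n+1})$ and let $K_{n+1}\supseteq H_{n+1}$ be the first element of $\FDet(G)$ found in the same way. The whole sequence is then $X$-computable, $G = \bigcup_n K_n$, and since each $K_n \div G$ we also get $K_n \div K_{n+1}$ by \cref{prop:detachment_is_forever}. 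Finally I would extract a basis ($\kappa$-computably from the sequence): choose a basis $B_0$ of~$K_0$ and, using that $K_n\div K_{n+1}$ with $K_{n+1}$ free, $\kappa$-computably extend $B_n$ to a basis $B_{n+1}$ of~$K_{n+1}$; then $\bigcup_n B_n$ is an $X$-computable basis of~$G$.

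The main obstacle is the $\emptyset'$-decidability of $\FDet(G)$. A naive count makes membership look like a $\Pi^0_1$ statement lying on top of the $\emptyset'$-computation of~$\lambda$, which would only yield $\emptyset''$; what makes $\emptyset'$ suffice is that once the single parameter $\lambda = |H|^+$ has been extracted (a total $\emptyset'$-computable function of~$H$), the remaining quantifier ranges over $\kappa$-finite groups against a genuinely $\kappa$-computable matrix, so the statement is $\Pi^0_1(L_\kappa)$ and is decided outright by~$\emptyset'$. A secondary point to verify is that no admissibility issue arises from~$X$: the sequence $\seq{K_n}$ has length~$\omega$ and its only limit is~$G$ itself, so we never need $(L_\kappa,X)$ to be admissible in order to form $\kappa$-finite unions at transfinite stages.
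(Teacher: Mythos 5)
Your proof is correct and follows essentially the same route as the paper's (very terse) argument: build the chain $K_0\subseteq K_1\subseteq\dots$ from the proof of \cref{prop:cofinality_omega_identifying_freeness} effectively from $X$, using the $\emptyset'$-decidability of $\FDet(G)$ exactly as in the inaccessible case of \cref{prop:computing_detachment_set_is_sometimes_easy}, and then extract a basis by successively extending along the detaching chain. The details you supply --- the identification $\FDet(G)=\{H : H\div G\}$ via \cref{lem:detachment_in_subgroup_of_same_size}, the quantifier count showing one jump suffices, and the observation that no admissibility issue arises for an $\omega$-length chain --- are precisely what the paper leaves implicit.
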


\begin{proof}
	The sequence $\seq{K_n}$ from the proof of \cref{prop:cofinality_omega_identifying_freeness} is computable from~$\emptyset'$ and the sequence $\seq{\kappa_n}$, which is $\emptyset'$-computable; as in \cref{prop:computing_detachment_set_is_sometimes_easy}, $\FDet(G)$ is $\emptyset'$-computable. 
\end{proof}

So for example, if $\kappa < \aleph_\kappa$ (for example $\kappa = \aleph_\w$), then every $\kappa$-computable free group has a $\emptyset'$-computable basis: the set of cardinals is $\emptyset'$-computable, and a cofinal sequence $f\colon \w\to \alpha$ (where $\kappa = \aleph_\alpha$) is $\kappa$-finite. 

\begin{proposition} \label{prop:identifying_free_singular_groups}
	If $\cf(\kappa) = \aleph_0$, then the index-set of the $\kappa$-computable free groups is $\Pi^0_2(L_\kappa)$-complete.
\end{proposition}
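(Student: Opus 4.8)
The plan is to prove membership in $\Pi^0_2(L_\kappa)$ and then $\Pi^0_2(L_\kappa)$-hardness. For membership, \cref{prop:cofinality_omega_identifying_freeness} tells us that a $\kappa$-computable group~$G$ is free if and only if every $\kappa$-finite subgroup of~$G$ is free. Since ``$H$ is a $\kappa$-finite subgroup of~$G$'' is $\kappa$-computable (uniformly in an index for~$G$) and ``$H$ is free'' is $\Sigma^0_1$ (search for a basis, every basis being $\kappa$-finite), freeness has the form $\forall H\,(\cdots \then \exists\,\text{basis})$, which is $\Pi^0_2(L_\kappa)$; combined with the ($\Pi^0_2$) assertion that the index is total, this places the index-set in $\Pi^0_2(L_\kappa)$.

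For hardness I would reduce a fixed $\Pi^0_2(L_\kappa)$-complete set, written as $\{e : L_\kappa \models \forall j\,\exists m\,R(e,j,m)\}$ with~$R$ having bounded quantifiers. Fix a cofinal $\w$-sequence $\seq{\kappa_k}$ in~$\kappa$, each~$\kappa_k$ a cardinal. Uniformly in~$e$ I build a $\kappa$-computable group $G = G(e) = \bigoplus_{j<\kappa} B^j$, where the block~$B^j$ is designed to be free if and only if $\exists m\,R(e,j,m)$. Since a direct sum of free groups is free, and conversely every direct summand of a free abelian group is a subgroup of a free group and hence free (\cref{fact:subgroups_are_free}), $G$ is free if and only if every~$B^j$ is free, i.e.\ if and only if~$e$ lies in the given set.

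Each block is a paced version of \cref{exmp:Melnikov}. In the notation there, take a copy of $\Z^{\w+1}$ with basis $\{e_0, e_1, \dots, e_*\}$ and elements $u_k = p_k e_k + e_*$, and set $B^j = \Z^{\w+1}/\Span\{u_k : k\in T_j\}$, where $T_j = \{k<\w : L_{\kappa_k} \models \lnot\exists m\,R(e,j,m)\}$ is the set of rungs at which no witness for~$j$ has yet appeared. Because any subset of the $P$-independent set~$U$ is $P$-independent, $\Span\{u_k : k\in T_j\}$ is pure, so~$B^j$ is torsion-free; and it is built $\kappa$-computably, since at stage~$\kappa_k$ we can decide $\kappa$-computably whether $k\in T_j$ and impose the corresponding relation. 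Note that~$T_j$ is an initial segment of~$\w$: it equals $[0,k_0)$ for the least~$k_0$ admitting a witness in~$L_{\kappa_{k_0}}$ if one exists, and equals all of~$\w$ otherwise. If $T_j$ is finite, then $\Span\{u_k : k\in T_j\}$ is finitely generated and pure, hence detaches (\cref{prop:extension_of_finite_P-independece}), so~$B^j$ is free; if $T_j = \w$, then the argument of \cref{exmp:Melnikov} shows $\Span\{u_k : k\in T_j\}$ does not detach, so~$B^j$ is not free (\cref{fact:detachment_and_freeness_of_quotient}). As the sequence $\seq{\kappa_k}$ is cofinal, $T_j$ is finite exactly when a witness exists, giving $B^j$ free $\Iff \exists m\,R(e,j,m)$, as required. (Equivalently, filtering~$B^j$ by the images of $\la e_0,\dots,e_k,e_*\ra$ and applying \cref{prop:club_and_freeness} with $\gamma = \w$: finitely many twists leave a cofinal detachment set, while twisting at every rung leaves it empty.)

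The main obstacle is the timing of witnesses: a witness for~$j$ may surface only at a very late stage, and a naive construction that makes~$B^j$ non-free incrementally would already be committed to non-freeness by then. The device resolving this is that \emph{finitely many} non-detaching (Melnikov) relations keep the block free, while only \emph{infinitely many} destroy freeness, together with the monotonicity of~$T_j$: no matter when the witness appears, only an initial segment of relations has been imposed, so the block is free, and the non-free limit arises precisely when no witness ever appears. Verifying that the blocks assemble into a single $\kappa$-computable group of universe~$\kappa$, and that the reduction $e\mapsto G(e)$ is $\kappa$-computable, is then routine.
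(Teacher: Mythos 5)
Your membership argument is correct and matches the paper's. The top-level shape of your hardness argument --- a direct sum of blocks, one per $j$, with block~$j$ free if and only if the $\Sigma^0_1$ condition $\exists m\,R(e,j,m)$ holds --- is also the right one. The gap is in the construction of the blocks: everything hinges on the cofinal $\w$-sequence $\seq{\kappa_k}$ being available to a $\kappa$-computable construction, and no such sequence is $\kappa$-computable. Since $\kappa$ is a cardinal, $L_\kappa$ is admissible, so $\Sigma_1$-collection holds; hence every total function from $\w$ to $\kappa$ with $\kappa$-c.e.\ graph has bounded range (equivalently, is $\kappa$-finite), and likewise no $\kappa$-computable cofinal subset of~$\kappa$ has order type~$\w$. (A cofinal $\w$-sequence is also not $\kappa$-finite, so it cannot be smuggled in as a parameter; this is exactly why \cref{cor:simple_singular_bases} makes ``$X$ computes a cofinal $\w$-sequence'' an explicit extra hypothesis on the oracle.) Consequently the predicate ``$k\in T_j$'', i.e.\ ``$L_{\kappa_k}\models\lnot\exists m\,R(e,j,m)$'', is not shown to be $\kappa$-computable uniformly in $(j,k)$, and so $G(e)$ is not shown to be a $\kappa$-computable group --- which is the entire content of an index-set hardness claim. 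The difficulty is not cosmetic: a witness~$m$ may appear arbitrarily late below~$\kappa$, so a countable block that must wait for it is forced to spread its countably many twists cofinally in~$\kappa$, and that pacing cannot be carried out computably.

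The paper's proof (it reuses the construction of \cref{prop:weakly_compact:Pi02_complete}, which applies because $\kappa$ is a limit cardinal) avoids this by using $\Sigma_1$-reflection in place of a cofinal $\w$-sequence: for the block with parameter~$\alpha$, the $\Sigma^0_1$ fact holds in $L_\kappa$ if and only if it holds in $L_{\alpha^+}$, so the waiting only has to happen along $[\alpha,\alpha^+)$ with $\alpha^+<\kappa$. Since $\alpha^+$ is regular and uncountable, the block must then be twisted along the non-reflecting stationary set $E\cap[\alpha,\alpha^+)$ (so that every proper initial segment remains free while the full block is not), and the ``puffing up by $\Z^s$'' device keeps the group total and $\kappa$-computable even though the construction cannot recognize when it has reached~$\alpha^+$. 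Your Pontryagin--Melnikov quotient blocks cannot be repaired while staying countable; the twisting has to be re-indexed along $E\cap[\alpha,\alpha^+)$ as in the paper.
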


\begin{proof}
	Just like the weakly compact case (\cref{prop:weakly_compact:Pi02_complete}); the same construction works.  
\end{proof}


\bibliographystyle{plain}
\bibliography{Free_abelian_groups}

\end{document}